\numberwithin{equation}{section}
\newtheorem{theorem}{Theorem}[section]
\newtheorem{lemma}{Lemma}[section]
\newtheorem{remark}{Remark}[section]
\newcommand{\ba}{\begin{aligned}}
\newcommand{\ea}{\end{aligned}}
\newcommand{\be}{\begin{equation}}
\newcommand{\ee}{\end{equation}}
\newcommand{\bnn}{\begin{eqnarray*}}
\newcommand{\enn}{\end{eqnarray*}}
\newcommand{\thatsall}{\hfill$\Box$}
\date{}
\title{The Compressible Navier-Stokes Equations on the Multi-Connected Domains}
\author{Xinyu F{\small AN}$^{a}$, Song J{\small IANG}$^{a}$,  Jing L{\small I}$^{b,c}$ \thanks{Email addresses:  fanxinyu17@mails.ucas.edu.cn (X. Y. Fan), jiang@iapcm.ac.cn (S. Jiang), ajingli@gmail.com  (J. Li)  }  \\  {\normalsize a.  Institute of Applied Physics and Computational Mathematics,}\\
{\normalsize  Beijing 100088, P. R. China;}\\
{\normalsize b. Department of Mathematics, }\\ {\normalsize  Nanchang University, Nanchang 330031, P. R. China;} \\ {\normalsize c. Institute of Applied Mathematics, AMSS,} \\ {\normalsize \&   Hua Loo-Keng Key Laboratory of Mathematics,}\\
{\normalsize  Chinese Academy of Sciences,    Beijing 100190,
P. R. China }}
\begin{document}
\maketitle
\begin{abstract}
This paper investigates the isentropic compressible Navier-Stokes equations on $k$-connected domains $\Omega$ under Navier-slip boundary conditions. We study the multi-solvability of the stationary systems on $\Omega$, which is closely related with the Cauchy-Riemann systems and critical points of harmonic functions on $\Omega$. Then based on the structure of Green's functions, the commutator estimates are obtained on the circular domains and extended to $\Omega$ with the help of conformal mappings. Moreover, we will utilize these assertions to discuss the global well-posedness and large time behaviours of the non-stationary systems on $\Omega$ with large initial values containing vacuum.

\textbf{Keywords:} Compressible Navier-Stokes equations; Conformal mappings; Green's functions; Critical points; Multi-connected domains
\end{abstract}
\section{Introduction and main results}
\quad We assume that $\Omega$ is a $k$-connected $(k\geq 2)$ bounded smooth domain in $\mathbb{R}^2$, and the boundary of the domain $\partial\Omega$ consists of $k$ disjoint smooth simple closed curves $\{\Gamma_i\}_{i=0}^{k-1}$,
\begin{equation}\label{C101}
\partial\Omega=\bigcup_{i=0}^{k-1}\Gamma_i,~~\Gamma_i\cap\Gamma_j=\varnothing\ \  \mathrm{if}\ i\neq j,
\end{equation}
where $\Gamma_0$ is the outermost part of $\partial\Omega$. Formally speaking, there are $k-1$ disjoint holes with smooth boundaries on $\Omega$.
 
Let us investigate the isentropic compressible Navier-Stokes equations in $\Omega$:
\begin{equation}
\begin{cases}
\rho_t+\mathrm{div}(\rho u)=0, \\
(\rho u)_t+\mathrm{div}(\rho u\otimes u)+\nabla P=\mu\Delta u+\nabla\big((\mu+\lambda)\mathrm{div} u\big),   \label{11}
\end{cases}
\end{equation}
where $\rho$ and $u=(u_1, u_2)$ denote the unknown density and velocity field of the fluid. The pressure $P$ and the viscosity coefficients $\mu$ and $\lambda$ are given by 
\begin{equation*}
P=\rho^\gamma,~~0<\mu=\mathrm{constant},~~ \lambda(\rho)=\rho^\beta,
\end{equation*}
with positive constants $\beta>0$ and $\gamma>1.$
The system is imposed the initial conditions,
\begin{equation}\label{12}
\rho(x, 0)=\rho_0(x), \quad \rho u(x, 0)=m_0(x), \quad x\in \Omega,
\end{equation}
and the Navier-slip boundary conditions,
\begin{equation}
u\cdot n=0, \quad \mathrm{curl} u=-Ku\cdot n^{\bot},\quad  \mathrm{on}\ \partial \Omega, \label{15}
\end{equation}
where $n=(n_1,n_2)$ and $ n^\bot\triangleq (n_2,-n_1)$ denote the unit outer normal vector and the unit tangential vector on $\partial \Omega$, while $K$ is a non-negative smooth function on $\partial\Omega$.  
Then the closely related stationary system on $\Omega$ is given by
\begin{equation}\label{C102}
\begin{cases}
\mathrm{div}(\rho u)=0,\\
\mathrm{div}(\rho u\otimes u)+\nabla P=\mu\Delta u+\nabla\big((\mu+\lambda)\mathrm{div}u\big),\\
\end{cases}
\end{equation}
subject to the conditions
\begin{equation}\label{C103}
\begin{split}
\int_\Omega\rho\,dx=\int_\Omega\rho_0\,dx,~~
u\cdot n|_{\partial\Omega}=0,~~\mathrm{curl}u|_{\partial\Omega}=-Ku\cdot n^\bot.
\end{split}
\end{equation}

There is large number of literature about the strong solvability for the multidimensional compressible Navier-Stokes system.  The history of the area may trace back to Nash \cite{NA} and Serrin \cite{ser1},  who established the local existence and uniqueness of classical solutions respectively for the density away from vacuum. The first result of global classical solutions was due to Matsumura-Nishida \cite{1980The}   for initial data close to a non-vacuum equilibrium in $H^{3}.$   Hoff \cite{1995Global,hoff2005} then studied the problem with discontinuous initial data, and introduced a new type of a priori estimates based on the material derivatives  $\dot{u}$. The major breakthrough in the frame of weak solutions is due to Lions \cite{1998Mathematical}, where he successfully obtains the global existence of weak solutions just under the assumption that the energy is finite initially. For technical reasons, he assumed that the exponent  $\gamma\geq  9/5$, which was further released to the critical case $\gamma>3/2$ by Feireisl-Novotn\'{y}-Petzeltov\'{a} \cite{feireisl2004dynamics}. Recently, Huang-Li-Xin \cite{hlx21} and Li-Xin \cite{lx01} established the global existence and uniqueness of classical solutions, merely assumed the initial energy is small enough, where large oscillations are available. Cai-Li \cite{caili01} extended this result to bounded simply connected domains under Navier-slip boundary conditions and derived the exponential growth of classical solutions.

However, the global strong solvability theory for compressible flows upto now mainly deals with simply connected domains (including $\mathbb{R}^n$), which leaves the important case of multi-connected domains almost blank. The corresponding research for incompressible flows on multi-connected domains is fruitful, especially for the stationary system. In the well-known paper \cite{Leray}, Leray investigated the stationary Navier-Stokes equations in multi-connected domains, and proved the existence of solutions under non-homogeneous boundary conditions. Recently, Korobkov-Pileckas-Russo \cite{KPR} also extended the existence assertion to general compatible conditions,  based on the improved version of the Morse-Sard theorem.
In addition, an important multi-connected case is the 2D exterior domain. The classical papers by Gilbarg-Weinberger \cite{GW} and Amick \cite{ACJ} systematically studied the stationary system on 2D exterior domains. One may also refer to Galdi's monograph \cite{Galdi} for detailed discussions on related issues.

In summary, the analysis on the multi-connected domains $\Omega$ is challenging, which requires new observations.
The aim of this paper is to give a more complete discussion on the strong solutions to the stationary and non-stationary compressible Navier-Stokes systems in $\Omega$. We concentrate on two topics: the first one is the multi-solvable phenomenon of the stationary system \eqref{C102}--\eqref{C103} due to the $k$-connectedness of domains; the second one concerns the global well-posedness and large time behaviours of the non-stationary system \eqref{11}--\eqref{15} on $\Omega$.

To begin with, let us introduce the notations and conventions used throughout the paper. For $k\in \mathbb{N}^+$ and $p\in[1,\infty]$, the standard Lebesgue and Sobolev spaces over the domain $A$ are denoted by $L^p(A)$ and $W^{k,p}(A)$ respectively. In particular, $H^k(A)\triangleq W^{k,2}(A) $ and we introduce the notations:
\begin{equation*}
\begin{split}
&L^p\triangleq L^p(\Omega),~~W^{k,p}\triangleq W^{k,p}(\Omega),~~H^k\triangleq H^k(\Omega),\\
\tilde H^1\triangleq&\left\{u\in H^1(\Omega)\big|\,u \cdot n =0,\,{\rm curl}u =-Ku \cdot n^\bot\,\mbox{ on $\partial\Omega$}\right\}. 
\end{split}
\end{equation*}
The transpose gradient, material derivative, and  integral average of $f$ are denoted by
\begin{equation}\label{ou1r}
\nabla^{\bot}f\triangleq(-\partial_{2}f,\,\partial_{1}f),~~\frac{\mathrm{d}}{\mathrm{d}t}f=\dot{f}\triangleq\frac{\partial}{\partial t}f+u\cdot\nabla f,~~\hat{f}\triangleq\frac{1}{|\Omega|}\int_\Omega f\,dx.
\end{equation} 
\begin{remark}
By integrating \eqref{11}$_1$ over $\Omega\times[0,t]$, we apply \eqref{15} to infer that
\begin{equation*}
\int_\Omega\rho(x,t)\,dx=\int_\Omega\rho_0(x)\,dx.
\end{equation*}
Consequently, we will not distinguish $\hat{\rho}$ and $\hat{\rho}_0$ in the rest of the paper.
\end{remark}

In these terminologies, the first result of this paper systematically discusses the uniqueness of non-vacuum smooth solution to the stationary system \eqref{C102}--\eqref{C103}. Although such system is free of external forces, the non-vacuum  solution of it is still not unique in certain multi-connected domains, which is in sharp contrast to the simply connected domains.

\begin{theorem}\label{L11}
Let $\Omega\subset\mathbb{R}^2$ be a $k$-connected $(k\geq 2)$ smooth domain satisfying \eqref{C101}, then there are three cases of the stationary system \eqref{C102}--\eqref{C103}:

$\mathrm{a)}$. If $K$ is not identically $0$ on $\partial\Omega$, the unique smooth solution to the system \eqref{C102}--\eqref{C103} is the trivial one
$$\rho\equiv\hat\rho_0,~~u\equiv 0.$$

$\mathrm{b)}$. If $K\equiv 0$ and $\Omega$ is a 2-connected domain with $\partial\Omega$ consisting of two concentric circles, the non-vacuum smooth solutions to \eqref{C102}--\eqref{C103} are given by
$$\rho^{\gamma-1}(z)=\frac{\gamma-1}{\gamma}\left(C_2-\frac{C_1^2}{2|z|^2}\right),~~u_1(z)-iu_2(z)=\frac{iC_1}{z},~~z=x_1+ix_2,$$
where $C_1$ and $C_2$ are two real numbers.

$\mathrm{c).}$ If $K\equiv 0$ and $\Omega$ is any $k$-connected domain different from $\mathrm{b)}$,   the only non-vacuum smooth solution to  \eqref{C102}--\eqref{C103} is the trivial one.
\end{theorem}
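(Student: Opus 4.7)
My strategy is to derive a single energy identity by testing the momentum equation \eqref{C102}$_2$ against $u$, and then branch into the three cases using complex-analytic rigidity. Multiplying by $u$ and integrating over $\Omega$, the convective term $\int\rho u\cdot\nabla u\cdot u=\frac{1}{2}\int\rho u\cdot\nabla|u|^2$ vanishes after integrating by parts via $\mathrm{div}(\rho u)=0$ and $u\cdot n|_{\partial\Omega}=0$; rewriting $\nabla P=\rho\nabla\bigl(\frac{\gamma}{\gamma-1}\rho^{\gamma-1}\bigr)$, the pressure term $\int u\cdot\nabla P$ vanishes by the same continuity identity; and the viscous term, via the 2D decomposition $\mu\Delta u+\nabla((\mu+\lambda)\mathrm{div}\,u)=\nabla((2\mu+\lambda)\mathrm{div}\,u)+\mu\nabla^\bot\mathrm{curl}\,u$ together with the Navier-slip boundary condition, integrates by parts to
\begin{equation*}
\int_\Omega(2\mu+\lambda)(\mathrm{div}\,u)^2\,dx+\mu\int_\Omega|\mathrm{curl}\,u|^2\,dx=\mu\int_{\partial\Omega}K(u\cdot n^\bot)^2\,dS.
\end{equation*}

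For cases b) and c), $K\equiv 0$ makes the right-hand side vanish, forcing $\mathrm{div}\,u\equiv\mathrm{curl}\,u\equiv 0$, so $u$ is a harmonic vector field on $\Omega$. Using $u\cdot\nabla u=\nabla(|u|^2/2)$ for curl-free fields in 2D, the momentum equation reduces to the Bernoulli identity $\frac{\gamma}{\gamma-1}\rho^{\gamma-1}+\frac{1}{2}|u|^2\equiv C_2$; together with continuity and $\mathrm{div}\,u=0$, this forces $|u|^2$ to be constant along every streamline of $u$. Writing $u=\nabla^\bot\psi$ for a harmonic stream function $\psi$ (which must be constant on each $\Gamma_j$ by $u\cdot n=0$, hence single-valued on $\Omega$), the rigidity ``$|\nabla\psi|^2$ constant on level sets of $\psi$'' translates, via the holomorphic gradient $f=\partial_1\psi-i\partial_2\psi$, into the requirement that $f^2/f'$ be a real-valued holomorphic function, and hence a real constant; the resulting ODE $f'=f^2/c$ yields $f(z)=-C_1/(z-z_0)$ and $\psi(z)=-C_1\log|z-z_0|+\text{const}$. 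The requirement that $\psi$ be constant on each $\Gamma_j$ then forces every boundary curve to be a circle centered at the common point $z_0$; this is possible only when $k=2$ and $\Omega$ is a concentric annulus (case b), producing $u_1-iu_2=iC_1/z$ and the stated formula for $\rho$), and impossible in every other geometry (case c), where only $\psi\equiv\text{const}$ and $u\equiv 0$ survive).

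For case a), $K\not\equiv 0$ and the boundary term in the identity can be strictly positive, so the estimate does not close on its own. If $u$ is already harmonic, then $\mathrm{curl}\,u|_{\partial\Omega}=0$ combined with the Navier-slip BC forces $Ku\cdot n^\bot\equiv 0$, hence $u\equiv 0$ on the open arc $\{K>0\}\subset\partial\Omega$, and $u\equiv 0$ throughout $\Omega$ by unique continuation for harmonic vector fields. In general I decompose $u=u_h+w$ into its $L^2$-projection onto the $(k-1)$-dimensional space of admissible harmonic vector fields and the orthogonal complement: the argument above kills $u_h$, while $w$ is controlled by a Korn-type inequality for the multi-connected $\Omega$ (which holds precisely because the harmonic modes have been projected out), combined with a trace estimate that absorbs the boundary friction term into the interior dissipation. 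The hardest step is this case a): unlike in the simply-connected setting of \cite{caili01}, the nontrivial kernel of harmonic vector fields obstructs a direct Korn-Poincar\'e closure, so the unique-continuation argument and the Korn/trace estimate must be set up separately and then recombined through the energy identity.
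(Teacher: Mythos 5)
Your energy identity has a sign error that undercuts case a). With the paper's conventions ($\omega=\mathrm{curl}\,u=\partial_2u_1-\partial_1u_2$, $n^\bot=(n_2,-n_1)$, so that $\Delta u=\nabla\mathrm{div}\,u+\nabla^\bot\omega$), testing against $u$ gives the boundary friction term on the \emph{dissipative} side:
\[
\int_\Omega\bigl((2\mu+\lambda)(\mathrm{div}\,u)^2+\mu\,\omega^2\bigr)\,dx+\mu\int_{\partial\Omega}K|u|^2\,dS=0,
\]
i.e.\ \eqref{C305}. You put $+\mu\int_{\partial\Omega}K(u\cdot n^\bot)^2\,dS$ on the \emph{right}, which makes the boundary act as a source and the identity non-coercive. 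Physically the Navier friction must dissipate energy, and the rest of the paper (e.g.\ Lemma~\ref{3111}) is consistent with the boundary term being nonnegative on the left. Because of this sign you are forced in case a) into the projection-onto-harmonic-fields $+$ Korn/trace sketch, which is not a proof as written: the stationary system is nonlinear in $u$, so an $L^2$ orthogonal splitting $u=u_h+w$ does not decouple the equations, and "absorbs the boundary friction term into the interior dissipation" is left unjustified. With the correct sign none of this is needed: all three nonnegative terms vanish, so $\mathrm{div}\,u=\mathrm{curl}\,u=0$ in $\Omega$ and $u\equiv 0$ on the open portion of $\partial\Omega$ where $K>0$, whence $u\equiv 0$ by unique continuation for the analytic function $u_1-iu_2$ (this is exactly what the paper's div-curl estimate \eqref{gg1} in Lemma~\ref{le3.4} encodes), and then $\nabla P=0$ forces $\rho\equiv\hat\rho_0$.

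Your treatment of cases b) and c), by contrast, is correct and takes a genuinely different route from the paper. The paper first proves $u=\nabla^\bot\omega$ with $\omega$ a single-valued harmonic stream function constant on each $\Gamma_j$ (Lemma~\ref{pr1}), then for $k\ge 3$ invokes the Alessandrini--Magnanini index count (Lemma~\ref{LL24}) to produce a critical point and derives a contradiction with $\nabla^\bot\omega\cdot\nabla|\nabla\omega|^2=0$, while for $k=2$ non-concentric annuli it conformally maps to a standard annulus and shows $\varphi$ must be affine. You instead go through the holomorphic gradient $f=\partial_1\psi-i\partial_2\psi$ and observe that $u\cdot\nabla|u|^2=0$ is equivalent to $f'/f^2$ being real-valued on $\Omega$; since a meromorphic function that is real off its poles cannot have a pole, $f$ must be zero-free and $f'/f^2$ is a real constant $c$, yielding $f(z)=-C_1/(z-z_0)$ and $\psi=-C_1\log|z-z_0|+\mathrm{const}$. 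The requirement that $\psi$ be constant on each $\Gamma_j$ then forces every boundary component to be a circle centered at $z_0$, which is only possible for a concentric annulus ($k=2$, case b)) and impossible otherwise (case c)). This unifies the paper's two subcases, implicitly reproduces the critical-point obstruction (a zero of $f$ would give $f'/f^2$ a pole of order $\ge 2$, contradicting real-valuedness), and works directly for general $\gamma$ via the Bernoulli identity $\tfrac{\gamma}{\gamma-1}\rho^{\gamma-1}+\tfrac12|u|^2=C_2$ rather than the paper's specialization to $P=\rho^2$. One detail you should spell out: the single-valuedness of $\psi$ on the multi-connected $\Omega$ follows from the vanishing of the periods $\int_{\Gamma_j}u\cdot n\,dS=0$, which is the content of the paper's Lemma~\ref{pr1} and deserves a sentence.
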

\begin{remark}
According to the Lions' monograph \cite[Chapter 6]{1998Mathematical}, the non-uniqueness of solution to the system \eqref{C102}--\eqref{C103} is usually caused by vacuum. However, the non-trivial solution in Theorem \ref{L11} can be constructed out of vacuum. Such multi-solvable phenomenon is due to the $k$-connectedness of $\Omega$.
\end{remark}
\begin{remark}
Note that the density of the non-trivial solution on the annulus domain given by case $b)$ is not constant, thus it is not a solution to the stationary incompressible Navier-Stokes equations.
\end{remark}

The above classification suggests us considering the global well-posedness and large time behaviour of the non-stationary system \eqref{11}--\eqref{15} satisfying the case a) of Theorem \ref{L11}, in which the corresponding stationary system is uniquely solvable.

\begin{theorem}\label{T1}
Let $\Omega\subset\mathbb{R}^2$ be a $k$-connected $(k\geq 2)$ bounded smooth domain satisfying \eqref{C101}. We assume that $K$ is not identically $0$ on $\partial\Omega$, 
\begin{equation}\label{17}
\beta>4/3,~~\gamma>1, 
\end{equation}
and the initial values $(\rho_0\geq 0,  m_0)$ are provided by:
\begin{equation}\label{18}
\rho_0\in W^{1,q}\,(q>2),~~u_0 \in\tilde H^1 ,~~m_0 = \rho_0u_0. 
\end{equation}

Then the system \eqref{11}--\eqref{15} admits the unique global strong solution $(\rho, u)$ in $\Omega\times(0, \infty)$ with the following regularity assertions: for any $0<T<\infty$,
\begin{equation}\label{19}
\begin{cases}
\rho\in C\big([0, T]; W^{1, q} \big),\  \rho_t\in L^\infty\big(0, T ;L^2 \big),\\
u\in L^\infty\big(0, T;H^1 \big)\cap L^{1+1/q}\big(0, T;W^{2, q} \big), \\
\sqrt tu\in L^\infty\big(0, T;H^{2} \big)\cap L^2\big(0, T;W^{2, q} \big),\ \sqrt tu_t\in L^2\big(0, T;H^1 \big), \\ 
\rho u\in C\big([0, T] ;L^2 \big),\  \sqrt{\rho}u_t\in L^2\big((0, T)\times \Omega\big).
\end{cases}
\end{equation}

Moreover the large time behaviours of this strong solution are given below:

1). $($Uniform bounded$)$ There is a  constant $\mathbf{C}$ depending on $\Omega$, $\mu$, $\beta$, $\gamma$, $\|\rho_0\|_{L^{\infty} }$, and $\|u_0\|_{H^1 }$, such that for any $0<T<\infty$,
\begin{equation}\label{md}
\sup_{0\leq t\leq T}\|\rho(\cdot,t)\|_{L^\infty }\leq\mathbf{C}.
\end{equation}

2). $($Exponential decay$)$ For any $p\in[1,\infty)$, there are constants $\mathbf{C}'$ and $\alpha$ determined by $p$, $\Omega$, $\mu$, $\beta$, $\gamma$, $\|\rho_0\|_{L^{\infty} }$, and $\|u_0\|_{H^1 }$, such that for any $1\leq t<\infty$,
\begin{equation}\label{zs}
\|\rho(\cdot,t)-\hat\rho\|_{L^p }+\|\nabla u(\cdot,t)\|_{L^p }\leq\mathbf{C}'e^{-\alpha t}.
\end{equation}
\end{theorem}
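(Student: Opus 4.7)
The plan is to follow the standard continuation scheme: establish local existence of a strong solution by linearisation, then derive uniform a priori estimates that exclude finite-time blow-up and allow the solution to be extended to $[0,\infty)$, and finally convert the uniform bounds into exponential decay using the rigidity of the stationary problem proved in Theorem \ref{L11}. Throughout, the effective viscous flux $F\triangleq (2\mu+\lambda(\rho))\mathrm{div}u-(P-\hat P)$ and the vorticity $\omega\triangleq\mathrm{curl}u$ will be the central auxiliary quantities, since they satisfy elliptic equations with far better regularity than $\mathrm{div}u$ and $\omega$ individually.

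First I would record the basic energy identity by testing \eqref{11}$_2$ with $u$: the boundary term produced by the slip condition is $\mu\int_{\partial\Omega}K|u|^2\,dS\geq 0$ because $K\geq 0$, so one obtains the standard bound on $\int\rho|u|^2+\int P/(\gamma-1)$. Next, using $\mu\Delta u=\nabla F-\mu\nabla^\perp\omega+\nabla(P-\hat P)$ modulo a harmonic-field correction, I would estimate $F$ and $\omega$ in $L^p$ by elliptic regularity together with the commutator estimates the paper proves on circular annuli via Green's functions and transports to $\Omega$ by conformal mapping. These estimates allow one to control $\|\nabla u\|_{L^p}$ and in turn to run a Zlotnik-type argument on the ODE $\dot\rho=-\rho\,\mathrm{div}u$ along particle trajectories, re-expressing $\mathrm{div}u$ in terms of $F$ and $P$, to obtain \eqref{md}. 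With $\|\rho\|_{L^\infty}$ under control, testing the momentum equation against $\dot u$ gives bounds on $\int\rho|\dot u|^2$ and $\int|\nabla\dot u|^2$, which upgrade via standard elliptic theory for the Lam\'e-type operator with slip conditions to the time-weighted $H^2$ and $W^{2,q}$ bounds listed in \eqref{19}; uniqueness follows by a routine energy comparison.

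The hardest step, and the principal novelty of the setting, is item (c) above. On a $k$-connected domain the Hodge decomposition of a tangential vector field contains a $(k-1)$-dimensional space of harmonic modes, so $\|\nabla u\|_{L^p}\lesssim\|\mathrm{div}u\|_{L^p}+\|\omega\|_{L^p}$ fails in general. The crucial observation is that the boundary condition $\mathrm{curl}u=-Ku\cdot n^\perp$ with $K\not\equiv 0$ is exactly what kills these harmonic modes, in complete parallel with case a) of Theorem \ref{L11}. To exploit this quantitatively and uniformly in time, I would use the Green's function structure on the reference annuli to produce sharp commutator bounds of the form
\begin{equation*}
\bigl\|[\nabla^{-1}\nabla^\perp,\,(2\mu+\lambda(\rho))^{-1}]f\bigr\|_{L^p}\leq C\|\nabla\rho\|_{L^q}\|f\|_{L^r},
\end{equation*}
and then transport them to $\Omega$ via the conformal map from the paper's circular-domain normal form. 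The degenerate viscosity $\lambda(\rho)=\rho^\beta$ with $\beta>4/3$ is precisely tuned so that these commutators absorb the worst nonlinear term in the density equation, mirroring the role of the same hypothesis in \cite{caili01} on simply connected domains.

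Once global existence and the uniform bound \eqref{md} are in hand, I would prove \eqref{zs} by a Lyapunov argument. Define
\begin{equation*}
\mathcal E(t)\triangleq\int_\Omega\Bigl(\tfrac{1}{2}\rho|u|^2+\tfrac{1}{\gamma-1}\bigl(\rho^\gamma-\hat\rho^\gamma-\gamma\hat\rho^{\gamma-1}(\rho-\hat\rho)\bigr)\Bigr)\,dx.
\end{equation*}
Differentiating and using \eqref{15}, the dissipation contains $\mu\int|\nabla u|^2+(\mu+\lambda)\int(\mathrm{div}u)^2+\mu\int_{\partial\Omega}K|u|^2\,dS$; combining the boundary dissipation from $K\not\equiv 0$ with a Poincar\'e--Korn-type inequality adapted to $\tilde H^1$ (whose validity is equivalent to the triviality of the harmonic modes, which again uses case a) of Theorem \ref{L11}) yields a coercive lower bound on the dissipation in terms of $\mathcal E$, hence $\mathcal E(t)\leq\mathcal E(1)e^{-\alpha(t-1)}$ for $t\geq 1$. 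The $L^p$ decay of $\rho-\hat\rho$ and $\nabla u$ is then obtained by interpolating this exponential $L^2$-type decay against the uniform higher-order bounds via the same effective-flux / commutator machinery used in the a priori phase, which gives \eqref{zs} for every $p\in[1,\infty)$.
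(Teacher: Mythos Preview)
Your high-level architecture (local solution, a priori estimates centred on $F$ and $\omega$, Zlotnik along particle paths for \eqref{md}, then weighted $H^2/W^{2,q}$ bounds, then decay) matches the paper. But the core step is misidentified, and this is a genuine gap, not a matter of notation.

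The commutator that actually appears is not $[\nabla^{-1}\nabla^\perp,(2\mu+\lambda(\rho))^{-1}]$. The paper writes $F$ via the Neumann Green's function $N(x,y)$ as
\[
F(x)=\frac{\mathrm d}{\mathrm dt}\Delta^{-1}\mathrm{div}(\rho u)\;-\;[u_i,\partial_i\Delta^{-1}\partial_j](\rho u_j)\;+\;B(x)\;+\;R(x),
\]
where the commutator is between the \emph{velocity} and the Riesz-type operator $\partial_i\Delta^{-1}\partial_j$, exactly as in Lions' Cauchy-problem argument. The new difficulty on a bounded domain is that $\partial_i\Delta^{-1}\neq\Delta^{-1}\partial_i$, and the discrepancy produces the boundary term
\[
B(x)=\int_\Omega\bigl(\partial_{x_i}\partial_{y_j}+\partial_{y_i}\partial_{y_j}\bigr)N(x,y)\,\rho u_iu_j(y)\,dy,
\]
whose kernel is a priori $O(|x-y|^{-2})$ and hence not integrable in two dimensions. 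The whole point of the Green's-function analysis and conformal transfer is the tangential cancellation $|\sum_s\tau_s(\partial_{x_s}+\partial_{y_s})\partial_{y_p}N(x,y)|\le C|y-x_*|^{-1}$ (with $x_*\in\partial\Omega$ the projection of $x$), which, combined with $u(x_*)\parallel\tau$ from the slip condition, reduces $B(x)$ to an integral of the same shape as the inner commutator. Your proposal does not isolate $B(x)$ or this cancellation; without it the Zlotnik step cannot close. Moreover, the bound you write, with $\|\nabla\rho\|_{L^q}$ on the right, would be circular: $\|\nabla\rho\|_{L^q}$ is only controlled \emph{after} \eqref{md}, in the higher-order stage. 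The paper's commutator bounds use only $\rho|u|$, difference quotients of $u$, and the extra-integrability estimate $\sup_t\int\rho|u|^{2+\nu}\,dx\le C$ with $\nu\sim R_T^{-\beta/2}$; it is in the final bookkeeping $R_T^\beta\le C R_T^{1+\beta/4+2\varepsilon}$ that $\beta>4/3$ enters, not through absorbing $\nabla\rho$.

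For the decay, your Lyapunov argument also has a gap: the dissipation $\int(2\mu+\lambda)(\mathrm{div}u)^2+\mu\omega^2+\mu\int_{\partial\Omega}K|u|^2$ controls $\|\nabla u\|_{L^2}^2$ (this is where $K\not\equiv0$ and the div--curl inequality \eqref{gg1} are used, as you say), but it does \emph{not} directly dominate the potential part $\int(\rho^\gamma-\hat\rho^\gamma-\gamma\hat\rho^{\gamma-1}(\rho-\hat\rho))$ of $\mathcal E$. A Poincar\'e--Korn inequality cannot manufacture control of $\rho-\hat\rho$ from $\nabla u$ alone. The paper does not give a self-contained argument here either; it invokes \cite[Propositions 4.2--4.3]{flw}, which couple the density oscillation back to the dissipation through the effective viscous flux (roughly, $\|P-\hat P\|_{L^2}$ is bounded by $\|F\|_{L^2}+\|(2\mu+\lambda)\mathrm{div}u\|_{L^2}$ and an auxiliary functional is added to $\mathcal E$ to make the differential inequality close). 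You would need to supply that mechanism.
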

\begin{remark}
For the case of $K\equiv 0$ on $\partial\Omega$, the arguments below also ensure that the system \eqref{11}--\eqref{15} admits the unique global solution satisfying \eqref{19}. However, the large time behaviours \eqref{md} and \eqref{zs} are temporarily not available.
\end{remark}
\begin{remark}
The model in \eqref{11} was introduced by Vaigant-Kazhikhov \cite{vaigant1995}, in which they proved the global well-posedness of strong solutions under the condition that $\beta>3$.
As indicated by \cite{fmd}, the restriction \eqref{17} for $\beta>4/3$ seems still optimal up to now.
\end{remark}
 
We make some comments on the analysis of the paper. This article focuses on the new phenomenons due to $k$-connectedness of the domain $\Omega$, and the key ingredients can be summarized in two aspects.

\textit{1. The Cauchy-Riemann system on multi-connected domains.}

Our arguments start from the homogeneous Cauchy-Riemann system,
\begin{equation}\label{CA101}
\begin{cases}
\mathrm{div}u=0,~~\mbox{in $\Omega$},\\
\mathrm{curl}u=0,~~\mbox{in $\Omega$},\\
u\cdot n=0,~~\mbox{on $\partial\Omega$},
\end{cases}
\end{equation}
which is not uniquely solvable on multi-connected domains.
We deduce that the solution of \eqref{CA101} must be given by $u=\nabla^\bot\omega$, where $\omega$ is a harmonic function which equals to (different) constant on each component of $\partial\Omega$.
Compared with the classical arguments in Temam's monograph \cite{RT}, where $u=\nabla\Phi$ and $\Phi$ is discontinuous along $k-1$ curves, the potential $\omega$ is smooth through out $\overline{\Omega}$ and provides further links between the Cauchy-Riemann systems with critical points of harmonic functions.

Precisely, according to the index theory \cite{AM}, the number of critical points of $\omega$, which is closely related with the multi-connectedness of $\Omega$, can not exceed $2k-4$. Such fact leads to the div-curl type estimates
\begin{equation}\label{CA102}
\|\nabla u\|_{L^p}\leq C\bigg(\|\mathrm{div}u\|_{L^p}+\|\mathrm{curl}u\|_{L^p}+\sum_{j=1}^{2k-3}|u(\xi_j)|\bigg),~~\mathrm{with}\ p>2,
\end{equation}
where $\{\xi_j\}_{j=1}^{2k-3}$ are arbitrary distinct points on $\overline{\Omega}$. In particular, when $\Omega$ is a $2$-connected bounded domain, one additional point is sufficient to give \eqref{CA102}.
 
More inspirationally, the velocity field $u$ of the  solution to the stationary system \eqref{C102}--\eqref{C103} must solve \eqref{CA101}, thus we can substitute $u=\nabla^\bot\omega$ into it and obtain the necessary condition of steady states that
\begin{equation}\label{CA103}
\nabla^\bot\omega\cdot\nabla|\nabla\omega|^2=0.
\end{equation}
Note that $\omega^\bot$ is tangent to the level curves of $\omega$, thus \eqref{CA103} implies that $|\nabla\omega|$ must be constant along each level curve. Once again, the critical points of $\omega$ play the essential role on determining whether $\omega$ (or $u$) is the trivial solution. Moreover, \eqref{CA103} gives non-trivial solutions on certain $2$-connected domains, which is in sharp contrast to simply connected domains.
 
\textit{2. Green's functions on multi-connected domains}
 
As indicated by Lions \cite[Chapter 5]{1998Mathematical}, the commutator estimate lies in the central position of the global well-posedness theory. We will extend it from Cauchy problems to general $k$-connected domains with the help of Green's functions.  
The key issue relies on commuting $\partial_{x_i}$ and $\partial_{y_i}$ of the Green's function  $N(x,y)$, which requires proper controls on the ``commutator",
$$\partial_{x_i}N(x,y)+\partial_{y_i}N(x,y).$$
The standard estimates (\cite{1972The}) on the Green's functions merely provide that
$$\big|\partial_{x_i}N(x,y)+\partial_{y_i}N(x,y)\big|\leq |\nabla N(x,y)|\leq C|x-y|^{-1}.$$ 
However, there is a further cancellation structure of $N(x,y)$ which reduces the singularity of the above estimates in some sense to
\begin{equation}\label{CA104}
\big| \partial_{x_i}N(x,y)+ \partial_{y_i}N(x,y)\big|\leq C.
\end{equation}

According to the theory of conformal mappings, each multi-connected domain is conformally equivalent with some circular domain whose boundary consists of closed circles. While \eqref{CA104} turns out to be an intrinsic property of the Green's functions, thus our strategy is to prove \eqref{CA104} first on circular domains and then extend it to $\Omega$ with the help of conformal mappings. Such method seems applicable for other boundary value problems as well. Moreover, we clearly describe the structure of Green's functions on multi-connected domains: There is a principal part $N_j(x,y)$ with the explicit formula near each component $\Gamma_j$ of $\partial\Omega$, which completely describes the singularity of $N(x,y)$ near $\Gamma_j$ as
$$N(x,y)=N_j(x,y)+R_j(x,y),$$
where $R_j(x,y)$ is a regular function when $x$ approaches $\Gamma_j$. Consequently, it is sufficient to check \eqref{CA104} for $N_j(x,y)$.

The rest of paper is organized as follows: Some basic facts of the complex analysis are collected in Section \ref{S2}, then we utilize these tools to prove Theorem \ref{L11} in Section \ref{S3}, and establish the commutator theory on multi-connected domains in Section \ref{S4}. After these preparations, we carry out a priori estimates in Section \ref{S5}, and finish the proof of Theorem \ref{T1} in Section \ref{S6}.

\section{Elementary complex analysis}\label{S2}
\quad
Let us recall some classical results in the complex analysis, which will play significant roles in our later arguments.
Suppose that $\Omega\subset\mathbb{R}^2$ is a bounded smooth domain, then for each point $(x,y)\in\Omega$, we introduce the complex variable $z=x+iy$ and $\bar{z}=x-iy$.
The change of coordinates provides that
\begin{equation}\label{cc}
\begin{split}
\partial_z=\frac{1}{2}(\partial_{x}-i\partial_{y}),&\quad \partial_{\bar z}=\frac{1}{2}(\partial_{x}+i\partial_{y}),\\
\partial_{x}=\partial_z+\partial_{\bar{z}},&\quad \partial_{y}=i(\partial_z-\partial_{\bar{z}}),
\end{split}
\end{equation}
where the bar denotes the complex conjugate. A   function $f=u+iv$ defined on $\Omega$ is called analytic, if the real part $u$ and the imaginary part $v$ satisfy the Cauchy-Riemann equations
\begin{equation}\label{CR}
\begin{cases}
\partial_{x}u=\partial_{y}v,\\
\partial_{y}u=-\partial_{x}v,\\
\end{cases}
\end{equation} 
In view of \eqref{cc}, the above equation is equivalent with $\partial_{\bar{z}}f=0$, consequently, for $\alpha=\alpha_1+i\alpha_2$, the directional derivative with respect to $\alpha$ of an analytic function $f$ is simply given by
\begin{equation}\label{CA201}
(\alpha_1\cdot\partial_x+\alpha_2\cdot\partial_y)f
=(\alpha_1+i\alpha_2)\cdot\partial_zf+(\alpha_1-i\alpha_2)\cdot\partial_{\bar{z}}f=\alpha\cdot f'. 
\end{equation}

Then we list some elementary properties of analytic functions, whose proof can be found in the text book (see \cite{2007Complex}). 

\begin{lemma}\label{L24}
Suppose that $\Omega\subset\mathbb{C}$ is an open bounded connected domain and $f\in C(\overline{\Omega})$ is an analytic function in $\Omega$, then we deduce that:
 
$\mathrm{a).}$ $\mathrm{(Maximum\ principle)}$ 
\begin{equation}\label{CA202}\max_{z\in\overline{\Omega}}|f(z)|= \max_{z\in\partial\Omega}|f(z)|.
\end{equation}
Thus if $f\equiv 0$ on $\partial\Omega$, then we also have $f\equiv 0$ in $\Omega$. Note that \eqref{CA202} is also valid for harmonic functions.

$\mathrm{b).}$ $\mathrm{(Uniqueness\ theorem)}$ If $f$ is non-constant in $\Omega$, then
the zeros of $f$ in $\Omega$ must be isolated. 

$\mathrm{c).}$ If the real part of $f$ is a constant, $\mathrm{Re}f\equiv c_1$, then the imaginary part of $f$ is a constant as well,
$\mathrm{Im}f\equiv c_2$.
 
\end{lemma}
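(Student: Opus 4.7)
The three parts are all standard results whose proofs are independent of one another, so I would handle them separately. For part (a), the plan is to prove the maximum modulus principle via the mean value property. Writing $f$ analytic in $\Omega$, Cauchy's integral formula applied on any closed disk $\overline{B_r(z_0)}\subset \Omega$ yields
\begin{equation*}
f(z_0)=\frac{1}{2\pi}\int_0^{2\pi}f(z_0+re^{i\theta})\,d\theta,
\end{equation*}
so if $|f(z_0)|=M$ is an interior maximum, the triangle inequality forces $|f(z_0+re^{i\theta})|\equiv M$ on the entire circle, and then on the disk by varying $r$. From $|f|$ being locally constant one deduces $f$ itself is locally constant (use the Cauchy--Riemann equations, or just that $|f|^2=f\bar f$ being constant together with analyticity gives $f'\equiv 0$). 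A standard clopen argument on the connected set $\Omega$ then spreads local constancy to all of $\Omega$, and continuity up to $\overline\Omega$ gives the boundary identity. The harmonic case is identical, since the mean value identity holds for harmonic functions directly. The consequence ``$f\equiv 0$ on $\partial\Omega\Rightarrow f\equiv 0$ in $\Omega$'' is then immediate since $\max_{\overline\Omega}|f|=0$.

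For part (b), I would argue by contradiction. Suppose the zero set of $f$ has a non-isolated point $z_0\in\Omega$, i.e.\ a sequence $z_n\to z_0$ with $f(z_n)=0$ and $z_n\ne z_0$. Expand $f$ in its Taylor series $f(z)=\sum_{n\geq 0}a_n(z-z_0)^n$, convergent on some disk $B_r(z_0)\subset\Omega$. If not all $a_n$ vanish, let $N$ be minimal with $a_N\ne 0$; then $f(z)=(z-z_0)^N g(z)$ with $g(z_0)=a_N\ne 0$, and continuity forces $g$ to be nonzero near $z_0$, contradicting the existence of the sequence $z_n$. Hence all $a_n$ vanish and $f\equiv 0$ on $B_r(z_0)$. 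To propagate this to all of $\Omega$, consider the set $S=\{z\in\Omega:f^{(k)}(z)=0\ \forall k\geq 0\}$; this is closed by continuity of derivatives and open by the power series argument, so $S=\Omega$ by connectedness, contradicting non-constancy.

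For part (c), I would simply invoke the Cauchy--Riemann equations \eqref{CR}: if $\mathrm{Re}\,f=u\equiv c_1$, then $\partial_x u=\partial_y u=0$, so by \eqref{CR} both $\partial_x v=-\partial_y u=0$ and $\partial_y v=\partial_x u=0$. Thus $\nabla v\equiv 0$ on the connected domain $\Omega$, hence $v\equiv c_2$.

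The main obstacle is the spreading step in (a) and (b): going from a local identity to a global one on $\Omega$. Both rely critically on the hypothesis that $\Omega$ is connected, via the standard clopen-subset argument, which is the only point at all subtle in an otherwise textbook lemma. No new ideas beyond the mean value / power series machinery are needed, consistent with the author's remark that the proof may be located in a standard reference.
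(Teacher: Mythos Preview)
Your proposal is correct; the paper does not supply its own proof but simply defers to the textbook \cite{2007Complex}, and the arguments you outline (mean value property plus clopen argument for (a), power series plus identity theorem for (b), Cauchy--Riemann equations for (c)) are exactly the standard proofs one finds there.
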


The classical arguments on conformal mappings ensure that we can transform the general $k$-connected domains to some canonical domains with simple geometric properties. We state some typical results (see \cite{GOL}) which will be repeatedly used.

\begin{lemma}\label{L27}
Suppose that $\Omega$ is a smooth $k$-connected domain given by \eqref{C101}, and $\mathcal{D}$ is a bounded circular domain with each component of $\partial\mathcal{D}$ a closed complete circle, then there is a conformal mapping $\varphi$ between $\mathcal{D}$ and $\Omega$ which is extended to the closure as 
\begin{equation*}
\varphi:\overline{\mathcal{D}}\rightarrow\overline{\Omega}.
\end{equation*}
In particular, if $\Omega$ is simply connected,  $\mathcal{D}$ can be chosen as the unit disc; if $\Omega$ is a 2-connected domain, $\mathcal{D}$ can be chosen as a concentric annulus $A=\{r<|z|<1\}$.
\end{lemma}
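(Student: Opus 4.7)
The plan is to treat the three cases separately, drawing on classical conformal mapping theory, and then to handle the boundary extension uniformly.

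First I would dispose of the simply connected case by invoking the Riemann mapping theorem: there exists a conformal bijection $\varphi$ from the unit disc $\mathcal{D}=\{|z|<1\}$ onto $\Omega$, unique after one normalization. Since $\partial\Omega$ is a smooth simple closed curve, Carath\'eodory's boundary correspondence theorem extends $\varphi$ to a homeomorphism $\overline{\mathcal{D}}\to\overline{\Omega}$, and Kellogg's regularity theorem upgrades the extension to a smooth diffeomorphism of the closures.

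The central case is $k\geq 2$, which requires Koebe's classical theorem on circular domains: every $k$-connected planar domain is conformally equivalent to a domain $\mathcal{D}$ whose boundary consists of $k$ disjoint circles (possibly including a single point, but here excluded by the smoothness of $\partial\Omega$). I would sketch this via the usual extremal problem: among all schlicht conformal maps of $\Omega$ (say sending a fixed interior point to $\infty$) normalized by a prescribed expansion, one maximizes the leading coefficient and shows by a standard variational/compactness argument using normal families that the extremal map sends $\Omega$ onto a domain bounded by circles. This is the line taken in Goluzin's monograph \cite{GOL}, which the statement cites. To extend the conformal map to the closures when $\partial\Omega$ is smooth, I would apply Carath\'eodory's extension locally near each boundary component $\Gamma_i$ (each of which is a Jordan curve) together with Kellogg-type interior regularity: smoothness of $\Gamma_i$ implies the extension is $C^\infty$ up to each boundary circle.

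For the 2-connected case I would refine the previous step using the conformal invariance of the modulus. Any doubly connected domain whose boundary components are non-degenerate continua possesses a conformal modulus $M\in(0,\infty)$, and two such domains are conformally equivalent iff their moduli coincide; the model domain for a given modulus can be chosen to be the concentric annulus $\{r<|z|<1\}$ with $\log(1/r)=2\pi/M$. Composing the general circular-domain map with a linear fractional transformation that maps one of the two bounding circles to the unit circle reduces the general circular form to this concentric annulus.

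The main obstacle, and the step I would present with the most care, is the $k\geq 2$ circular-domain theorem, since the extremal/continuity argument is substantially more delicate than the Riemann mapping theorem and is the only step not immediate from standard references. The boundary extension, while essential for our later use of $\varphi$ to transport Green's functions and commutator estimates between $\mathcal{D}$ and $\Omega$, follows uniformly from Carath\'eodory and Kellogg once the interior conformal equivalence is established.
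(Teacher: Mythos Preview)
Your proposal is a correct and coherent sketch of the classical theory: Riemann mapping for the simply connected case, Koebe's circular-domain theorem via an extremal argument for $k\ge 2$, the conformal modulus classification for the doubly connected case, and Carath\'eodory--Kellogg for smooth boundary extension.

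However, the paper does not actually prove this lemma at all. It is stated as a classical fact with a bare citation to Goluzin's monograph \cite{GOL}, and no proof environment appears; the text introduces it with ``We state some typical results (see \cite{GOL}) which will be repeatedly used.'' So there is nothing to compare your argument against beyond the reference. Your write-up goes well beyond what the paper provides, which is fine as exposition, but for the purposes of matching the paper a one-line citation to \cite{GOL} (and perhaps Carath\'eodory/Kellogg for the boundary regularity) would suffice.
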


We need some properties of conformal mappings, see \cite{FLL, 1961On} for the detailed proof. 
\begin{lemma}\label{L28}
Suppose that $\varphi$ is a conformal mapping between two bounded smooth path-connected domains $\Omega_1$ and $\Omega_2$, which is extends smoothly to the closure:
$$\varphi:\overline{\Omega}_1\rightarrow\overline{\Omega}_2.$$

$\mathrm{a).}$ The derivative of $\varphi$ is non-zero and there is a constant $C$ such that,
$$\varphi'(z)=\partial_z\varphi(z)\neq 0,~~|\varphi'(z)|\leq C,~~\forall z\in\Omega_1.$$
Therefore we can find a constant $C_\varphi$ such that 
$$C_{\varphi}^{-1}\,|z_1-z_2|\leq|\varphi(z_1)-\varphi(z_2)|\leq C_{\varphi}\,|z_1-z_2|.$$

$\mathrm{b).}$ Suppose that $\gamma_1$ and $\gamma_2$ are two smooth curves satisfying for some $z\in\Omega_1$,
\begin{equation*}
\begin{split}
\gamma_i(t):&(-1,1)\rightarrow\Omega_1,\ i=1,2,\\
&\gamma_1(0)=\gamma_2(0)=z.
\end{split}
\end{equation*}
Then the conformal mappings preserve angles in the sense of
$$|\varphi'(z)|\cdot\big\langle \gamma_1'(0), \gamma_2'(0)\big\rangle
=\big\langle (\varphi\circ\gamma_1)'(0), (\varphi\circ\gamma_2)'(0)\big\rangle,$$
where $\langle\cdot,\cdot\rangle$ is the inner product.
In particular, if $z_0\in\partial\Omega_1$ and $\alpha=x_0+iy_0$ is the unit outer normal vector at $z_0$, then $\varphi'(z_0)\cdot\alpha$ is an outer normal vector of $\partial\Omega_2$ at $\varphi(z_0)$.
\end{lemma}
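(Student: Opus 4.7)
The plan is to prove the two parts separately, drawing on classical complex analysis together with boundary regularity of conformal maps.

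For part (a), the starting point is that $\varphi$ is conformal between $\Omega_1$ and $\Omega_2$, hence analytic and locally injective. By the inverse function theorem for holomorphic maps (equivalently, by the characterization of locally injective analytic maps), $\varphi'(z)\neq 0$ for every $z\in\Omega_1$. Since $\varphi$ extends smoothly to the compact closure $\overline{\Omega}_1$, the continuous function $|\varphi'|$ attains a finite maximum there, which yields the uniform upper bound $|\varphi'(z)|\leq C$. For the lower bound and the bi-Lipschitz comparison, I would invoke the Kellogg-Warschawski boundary regularity theorem: when $\partial\Omega_i$ is smooth, not only does $\varphi'$ extend continuously to $\overline{\Omega}_1$, it is also non-vanishing on $\partial\Omega_1$. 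Applying the same statement to $\varphi^{-1}$, itself a conformal map between smooth domains, gives $|\varphi'(z)|\geq c>0$ uniformly. Finally, the bi-Lipschitz inequality follows by writing $\varphi(z_1)-\varphi(z_2)=\int_0^1\varphi'(\gamma(t))\gamma'(t)\,dt$ along a smooth path $\gamma\subset\overline{\Omega}_1$ joining the two endpoints; smooth bounded domains are quasi-convex, so geodesic and Euclidean distances are comparable with a constant depending only on the geometry, and $C_\varphi$ can be chosen uniformly.

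For part (b), the key algebraic fact is that the real Jacobian of an analytic map at $z$ acts on real tangent vectors as multiplication by the complex number $\varphi'(z)$, under the identification $\mathbb{R}^2\simeq\mathbb{C}$ already used in \eqref{CA201}. The chain rule applied to each curve then gives $(\varphi\circ\gamma_i)'(0)=\varphi'(z)\cdot\gamma_i'(0)$ in the sense of complex multiplication. Since multiplication by a nonzero complex number is a direct similarity (rotation plus scaling by $|\varphi'(z)|$), the angle between the two vectors is preserved and both lengths are scaled by $|\varphi'(z)|$; using $\langle a,b\rangle=\operatorname{Re}(\bar a b)$ for vectors viewed as complex numbers, a one-line computation yields the stated inner-product identity. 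For the normal-vector claim, the smooth extension of $\varphi$ together with the boundary non-vanishing of $\varphi'$ ensures that $\varphi\big|_{\partial\Omega_1}$ is a $C^1$-diffeomorphism onto $\partial\Omega_2$; by the chain rule it sends tangent vectors along $\partial\Omega_1$ to tangent vectors along $\partial\Omega_2$, and by the angle-preservation just established it sends the normal direction at $z_0$ to the normal direction at $\varphi(z_0)$. The outer orientation is preserved because the Jacobian determinant of $\varphi$ is $|\varphi'|^2>0$, so $\varphi'(z_0)\cdot\alpha$ points outward.

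The genuinely delicate ingredient is the continuous non-vanishing extension of $\varphi'$ to $\overline{\Omega}_1$, which is not automatic from conformality alone and relies on Kellogg-Warschawski type regularity for smooth boundaries; this is precisely the input provided by \cite{FLL, 1961On}. Once it is in place, the remainder reduces to the algebra of complex multiplication, so the rest of the proof is essentially formal.
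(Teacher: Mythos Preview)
The paper does not supply its own proof of this lemma; it simply states the result and refers to \cite{FLL, 1961On} for details. Your sketch is a correct and standard route: non-vanishing of $\varphi'$ in the interior from local injectivity of holomorphic maps, boundary non-vanishing and smooth extension from Kellogg--Warschawski regularity (precisely the content of the cited Warschawski reference), and the bi-Lipschitz comparison from quasi-convexity of smooth bounded domains applied to both $\varphi$ and $\varphi^{-1}$. For part~(b), identifying the real Jacobian with complex multiplication by $\varphi'(z)$ and reading off the effect on inner products and on normal directions is exactly the right mechanism.

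One small point you glossed over: your own computation (both tangent vectors scaled by $|\varphi'(z)|$, with $\langle a,b\rangle=\operatorname{Re}(\bar a\,b)$) actually produces
\[
\big\langle (\varphi\circ\gamma_1)'(0),(\varphi\circ\gamma_2)'(0)\big\rangle
=|\varphi'(z)|^{2}\,\big\langle\gamma_1'(0),\gamma_2'(0)\big\rangle,
\]
with a squared factor rather than the first power written in the statement. This appears to be a typo in the lemma as recorded; the squared scaling is what angle preservation genuinely gives, and the later applications in the paper only use the qualitative consequence that normals map to normals (with length $|\varphi'|$), which your argument establishes correctly. The orientation part of the normal-vector claim via positivity of the Jacobian determinant $|\varphi'|^{2}$ is also fine.
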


Let $\Omega$ be a $k$-connected domain satisfying \eqref{C101}. For $j\in\{0,1,\cdots,k-1\}$, let us    consider the Dirichlet problems on $\Omega$, 
\begin{equation}\label{hm}
\begin{cases}
\Delta\omega_j=0\, \mathrm{in}\ \Omega,\\
\omega_j=1\ \mathrm{on}\ \Gamma_j,~~ \omega_j=0\ \mathrm{on}\ \partial\Omega\setminus \Gamma_j.
\end{cases}
\end{equation}
Following the monograph of Bergman \cite[Chapter V]{B}, the unique smooth solution $\omega_j$ to \eqref{hm} is called the $j$-th harmonic measure of $\Omega$. We mention that the maximum principle  in Lemma \ref{L24} ensures that $\omega_j$ are non-negative and $\sum_j\omega_j\equiv 1$ in $\Omega$.

The lemma below due to Alessandrini-Magnanini \cite{AM} describes the number of critical points of harmonic measures, which plays an important role in Section \ref{S3}. See also \cite{LY} for related topics.
\begin{lemma}\label{LL24}
Suppose that $\Omega$ is a $k(\geq 2)$-connected domain satisfying \eqref{C101} and $\{\beta_j\}_{j=0}^{k-1}$ are $k-1$ real numbers not coincide. Let us consider the unique smooth solution $u$ to the Dirichlet problem:
\begin{equation*}
\begin{cases}
\Delta u=0\ \mathrm{in}\ \Omega,\\
u\equiv \beta_j\ \mathrm{on}\ \Gamma_j,~~j\in\{0,1,\cdots,k-1\}.
\end{cases}
\end{equation*}
We deduce that $u$ has finite many isolated critical points $\{\xi_i\}$ in $\overline{\Omega}$ with $|\nabla u(\xi_i)|=0$, and it holds that
\begin{equation*}
\sum_{\xi_i\in\Omega}m(\xi_i)+\frac{1}{2}\sum_{\xi_j\in\partial\Omega}m(\xi_j)=k-2,
\end{equation*}
where $m(\xi_i)$ is the multiplicity of the critical point $\xi_i$. Moreover, the level set $\{x\in\Omega\,|\,u(x)=u(\xi_i)\}$ near $\xi_i$ consists of finite many simple curves intersecting at $\xi_i$.
\end{lemma}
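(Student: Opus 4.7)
The plan is to reduce the question to a winding-number calculation for a holomorphic function and invoke the argument principle.

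Since $u$ is real and harmonic, $F(z)\triangleq u_x-iu_y=2\partial_z u$ is holomorphic in $\Omega$ (because $\partial_{\bar z}F=\tfrac12\Delta u=0$) and extends smoothly to $\partial\Omega$ by the regularity of the Dirichlet problem. The critical points of $u$ coincide with the zeros of $F$, and the standard local expansion
$$u(z)-u(\xi_i)=\mathrm{Re}\bigl(c\,(z-\xi_i)^{m(\xi_i)+1}\bigr)+O\bigl(|z-\xi_i|^{m(\xi_i)+2}\bigr),\qquad c\neq 0,$$
identifies $m(\xi_i)$ with the order of vanishing of $F$ at $\xi_i$. Since the $\beta_j$ do not all coincide, the maximum principle forces $u$ to be non-constant, hence $F\not\equiv 0$, and the zeros of $F$ in $\overline{\Omega}$ are isolated and finite in number.

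To count them, I would apply the argument principle on the contour obtained from $\partial\Omega$ by indenting into $\Omega$ along small semicircles of radius $\varepsilon$ around each boundary zero $\xi_j$ of $F$. On each $\Gamma_\ell$, the constancy of $u$ implies $\nabla u\perp\Gamma_\ell$ at non-critical boundary points, so $F(z)=\sigma(z)\,|\nabla u(z)|\,\overline{n(z)}$ there, with $n$ the outer unit complex normal and $\sigma\in\{\pm 1\}$ constant on each arc between consecutive boundary zeros. Traversing $\partial\Omega$ with $\Omega$ on the left (counterclockwise on $\Gamma_0$, clockwise on each $\Gamma_\ell$ with $\ell\geq 1$), the relation $n=-it$ gives that $\arg\overline{n}$ changes by $-2\pi$ along $\Gamma_0$ and by $+2\pi$ along each inner $\Gamma_\ell$, yielding a net change of $2\pi(k-2)$ on the straight portions. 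On each semicircular indentation around a boundary zero $\xi_j$ of multiplicity $m(\xi_j)$, the local form $F(z)\sim c(z-\xi_j)^{m(\xi_j)}$ produces a change $-m(\xi_j)\pi$ in $\arg F$. Equating the total change in $\arg F$ around the indented contour to $2\pi$ times the number of interior zeros counted with multiplicity gives
$$2\pi\sum_{\xi_i\in\Omega}m(\xi_i)=2\pi(k-2)-\pi\sum_{\xi_j\in\partial\Omega}m(\xi_j),$$
which rearranges to the desired identity.

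Finally, the claim on the level set near a critical point follows from the same canonical form: after the holomorphic change of variable $w=(z-\xi_i)\,c^{1/(m(\xi_i)+1)}$, the equation $u=u(\xi_i)$ reduces modulo higher-order terms to $\mathrm{Re}(w^{m(\xi_i)+1})=0$, a union of $m(\xi_i)+1$ equally spaced lines through the origin, and a standard application of the implicit function theorem produces the same number of smooth simple arcs meeting only at $\xi_i$. The main delicate step is the orientation and sign book-keeping on the indented contour: the half-residue contribution $-m(\xi_j)\pi$ from each semicircle must be matched consistently with the sign jumps of $\sigma$ on the straight parts of $\partial\Omega$, and this is the careful but standard half-residue calculation.
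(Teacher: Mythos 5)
The paper does not actually prove this lemma: it is quoted verbatim from Alessandrini--Magnanini \cite{AM}, and the text simply cites that reference. There is therefore no internal proof to compare against, and your argument serves as a self-contained derivation of the cited result. It is correct in substance, and the argument-principle (Poincar\'e index) approach you take is, to my knowledge, essentially the method used in \cite{AM}, so you have independently reconstructed the intended argument.

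Two small points worth tightening. First, you write $F\sim c(z-\xi_j)^{m(\xi_j)}$ near a boundary zero, with $c\ne 0$; for a function that is merely smooth up to $\partial\Omega$ this finite-order leading behavior is not automatic (a smooth $F$ could in principle vanish to infinite order at a boundary point). It needs the observation that, because $u$ is constant on each $\Gamma_j$, Schwarz reflection (after conformally flattening the boundary) extends $u$ harmonically, hence $F=2\partial_z u$ holomorphically, across $\partial\Omega$. This both guarantees the finite-order local form at boundary zeros and gives the isolation and finiteness of critical points on $\overline{\Omega}$, which you asserted but did not justify. Second, in the passage where you count the rotation of $\overline{n}$ over the ``straight portions,'' it is cleaner to note explicitly that on each boundary arc between consecutive indentations one has exactly $F=(\partial_n u)\,\overline{n}$ with $\partial_n u$ real and of constant sign, so $\Delta\arg F=\Delta\arg\overline{n}$ there; the potential sign flips of $\partial_n u$ across a boundary critical point are then absorbed exactly by the $-m(\xi_j)\pi$ half-residue contribution from the semicircle, as your upper-half-plane sanity check $F=z^m$ confirms. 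With those two clarifications the proof is complete and matches the statement, including the description of the level set as $m(\xi_i)+1$ smooth arcs through $\xi_i$ coming from the zero set of $\mathrm{Re}\,w^{m+1}$ after the normalizing change of variable.
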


\section{Stationary systems on multi-connected domains}\label{S3}

\quad In this section, we discuss the uniqueness of smooth solution to the stationary system \eqref{C102}--\eqref{C103} on general multi-connected domains. In Subsection \ref{SS31}, some crucial technical assertions concerning the Cauchy-Riemann systems are established, then we  utilize them to prove Theorem \ref{L11} in Subsection \ref{SS32}.

\subsection{The Cauchy-Riemann systems on multi-connected domains}\label{SS31}
\quad Let $\Omega$ be a $k$-connected domain satisfying \eqref{C101}:
\begin{equation*}
\partial\Omega=\bigcup_{i=0}^{k-1}\Gamma_i,~~\Gamma_i\cap\Gamma_j=\varnothing\ \  \mathrm{if}\ i\neq j,
\end{equation*}
where $\Gamma_i$ are smooth simple closed curves, and we set $z=x+iy$. Let us consider the following homogeneous Cauchy-Riemann system:
\begin{equation}\label{cr}
\begin{cases}
\mathrm{div}u=0\, &\mathrm{in}\ \Omega,\\
\mathrm{curl}u=0 &\mathrm{in}\ \Omega,\\
u\cdot n=0 &\mathrm{on}\ \partial\Omega.
\end{cases}
\end{equation}
Note that \eqref{cr} is uniquely solved by the trivial solution $u\equiv0$ on simply connected domains (see \cite[Section 6.3]{1998Mathematical} for example). However, such system admits non-trivial solutions on multi-connected domains, and our first task is to give the exact form of these solutions.

In fact, the solutions to \eqref{cr} must be derivatives of the linear combination of harmonic measures.
\begin{lemma}\label{pr1}
Let $\Omega$ be a $k$ connected domain satisfying \eqref{C101}, and $u$ solve the elliptic system \eqref{cr}, then there exists a sequence of real numbers $\{c_l\}_{l=1}^{n-1}$ such that 
\begin{equation}\notag
 u_2+iu_1= \sum_{l=1}^{n-1}c_l \,\partial_z\omega_l,
\end{equation}
where $\omega_l$ is the $l$-th harmonic measure of $\Omega$. Equivalently,  
\begin{equation*}
F(z)\triangleq u_1-iu_2=i\,\partial_z\omega, 
\end{equation*}
where $\omega$ is a harmonic function on $\Omega$, which is constant on each component of $\partial\Omega$.
\end{lemma}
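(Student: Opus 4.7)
The plan is to reinterpret \eqref{cr} in complex-analytic terms via $F:=u_1-iu_2$, construct a single-valued harmonic stream function on the multi-connected $\Omega$, and then expand its boundary data in the basis of the harmonic measures from \eqref{hm}.

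First, I would verify that $F$ is holomorphic on $\Omega$. Using the Wirtinger derivatives from \eqref{cc}, a direct computation gives
$$\partial_{\bar z}F=\tfrac12(\partial_x u_1+\partial_y u_2)+\tfrac{i}{2}(\partial_y u_1-\partial_x u_2)=\tfrac12\,\mathrm{div}\,u-\tfrac{i}{2}\,\mathrm{curl}\,u=0,$$
so the interior part of \eqref{cr} is exactly the Cauchy--Riemann equation $\partial_{\bar z}F=0$.

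Second, I would produce a single-valued stream function. Since $\mathrm{div}\,u=0$, the $1$-form $\alpha:=u_1\,dy-u_2\,dx$ is closed, and on the $k$-connected $\Omega$ it is globally exact iff its periods over a basis of $H_1(\Omega)$ vanish. Choosing as such a basis loops $\gamma_j$ that individually encircle each inner component $\Gamma_j$, the divergence theorem applied to the annular region bounded by $\gamma_j$ and $\Gamma_j$, together with the boundary condition $u\cdot n|_{\partial\Omega}=0$, yields
$$\int_{\gamma_j}\alpha=\int_{\gamma_j}u\cdot n\,ds=\int_{\Gamma_j}u\cdot n\,ds=0.$$
Hence there exists a single-valued $\omega\in C^\infty(\overline\Omega)$ with $u=\nabla^\bot\omega$. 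The identity $\mathrm{curl}(\nabla^\bot\omega)=\Delta\omega$ combined with $\mathrm{curl}\,u=0$ gives $\Delta\omega=0$, and $u\cdot n=\nabla\omega\cdot n^\bot$ is the tangential derivative of $\omega$ along $\partial\Omega$, which must vanish; since each $\Gamma_j$ is a connected curve, $\omega$ takes a constant real value $\beta_j$ on $\Gamma_j$.

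Finally, by uniqueness for the Dirichlet problem \eqref{hm} I would write $\omega=\sum_{j=0}^{k-1}\beta_j\omega_j$, and use $\sum_j\omega_j\equiv 1$ to absorb the $j=0$ term modulo an additive constant (which is annihilated by $\partial_z$), obtaining $\omega=\sum_{l=1}^{k-1}c_l\omega_l$ up to a constant with real $c_l$. The stated identities for $F$ and for $u_2+iu_1$ then follow by direct evaluation of $\partial_z\omega$ using \eqref{cc}, after absorbing an overall multiplicative constant into $\omega$. The main obstacle is the stream-function step, which is the topological heart of the argument: on a multi-connected domain a closed $1$-form need not be exact, and it is precisely the combination of $\mathrm{div}\,u=0$ with $u\cdot n=0$ on \emph{every} boundary component that annihilates each period and restores single-valuedness of $\omega$; this is exactly the point where the multi-connectedness of $\Omega$ is harnessed.
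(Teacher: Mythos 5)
Your proof is correct, modulo immaterial sign/normalization conventions, and it takes a genuinely different and more elementary route than the paper. The paper works with the holomorphic function $F=u_1-iu_2$ itself: it computes the complex period $b_j=\int_{\Gamma_j}F\,dz$ around each inner boundary component, uses the slip condition to show $b_j$ is real (the imaginary part is the flux, which vanishes; the real part is the circulation, which need not), introduces the auxiliary analytic functions $F_l=\partial_y\omega_l+i\partial_x\omega_l$ built from harmonic measures, and then invokes the nontrivial fact from Bergman's book that the period matrix $a_{jl}=\int_{\Gamma_j}F_l\,dz$ is real, symmetric, and invertible in order to choose $\lambda_l$ so that $G=F-\sum\lambda_lF_l$ has vanishing periods and hence a single-valued holomorphic primitive $H$. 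Working with $h=\operatorname{Re}H$ then recovers the harmonic measures. You instead bypass the period matrix entirely by observing that the relevant single-valued potential is the \emph{stream function}, not a primitive of $F$: the $1$-form $u_1\,dy-u_2\,dx$ is closed by $\operatorname{div}u=0$ and its periods are exactly boundary fluxes $\int_{\Gamma_j}u\cdot n\,ds=0$, which vanish by the slip condition — so $\omega$ is globally single-valued, harmonic by $\operatorname{curl}u=0$, and constant on each $\Gamma_j$ since its tangential derivative is $u\cdot n=0$. This is the same observation the paper makes implicitly when it shows $\operatorname{Im}b_j=0$, but you put it to direct use. What the paper's route buys is that it stays entirely within complex-function theory and surfaces the Bergman period matrix, which is a standard structural object on multi-connected domains; what your route buys is a shorter argument using only the divergence theorem and the elementary fact that a closed form with vanishing periods on $H_1(\Omega)$ is exact. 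Both paths land on the same representation $\omega=\sum_j\beta_j\omega_j$ (the paper's $h-\sum\lambda_l\omega_l$ is precisely your stream function). One small caveat to clean up: with your choice $\alpha=u_1\,dy-u_2\,dx$ and $d\omega=\alpha$ one gets $u=-\nabla^\bot\omega$, not $u=\nabla^\bot\omega$; this only shifts the constants $c_l$ and does not affect the conclusion.
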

\begin{proof}
According to \eqref{cr}, the complex-valued function $F(z)=u_1-iu_2$ satisfies the Cauchy-Riemann equations \eqref{CR}, thus $F(z)$ is  analytic in $\Omega$. For $j\in\{1,\cdots,k-1\}$, we check the period of $F$ with respect to $\Gamma_j$ by
\begin{equation*} 
\begin{split}
\int_{\Gamma_j}F(z)\,dz
&=\int_{\Gamma_j}(u_1-iu_2) (dx+idy)\\
&=\int_{\Gamma_j}\left(u_1\,dx+u_2\,dy\right)+i\int_{\Gamma_j}\left(u_1\,dy-u_2\,dx\right).
\end{split}
\end{equation*}
Let $s$ be the arc-length parameter of $\Gamma_j$ and $\Gamma_j$ be parametrized by $\gamma_j(s):[0,l]\rightarrow \Gamma_j$, satisfying
\begin{equation*} 
\gamma_j(0)=\gamma_j(l),\quad \gamma_j([0,l])=\Gamma_j,\quad \gamma'_j(s)=n^{\bot},
\end{equation*}
where $n^{\bot}=(n_2,-n_1)$ is the unit tangential vector on $\Gamma_j$. Consequently, in view of the boundary condition \eqref{cr}$_3$, we derive that
$$\int_{\Gamma_j}\left(u_1\,dy-u_2\,dx\right) =-\int_0^l u\cdot n(\gamma_j(s))\,ds=0,$$
which also guarantees that
\begin{equation}\label{CC301}
b_j\triangleq\int_{\Gamma_j}F(z)\,dz
=\int_{\Gamma_j}\left(u_1\,dx+u_2\,dy\right)\in\mathbb{R}.
\end{equation}

Next, suppose that $\omega_l$ is the harmonic measure given by \eqref{hm}. Then the boundary conditions \eqref{hm}$_2$ ensure that
\begin{equation}\label{00}
n\cdot\nabla^\bot\omega_l=-n^{\bot}\cdot\nabla\omega_l=0\quad \mathrm{on}\  \partial\Omega,
\end{equation}
which together with \eqref{hm}$_1$ shows that $(\partial_{y}\omega_l,-\partial_{x}\omega_l)$ satisfies \eqref{cr} as well.
Therefore, according to \eqref{CC301}, if we define $F_l(z)\triangleq\partial_{y}\omega_l+i\partial_{x}\omega_l$, then $F_l(z)$ is analytic in $\Omega$ and also satisfies that 
\begin{equation}\notag
a_{jl}\triangleq\int_{\Gamma_j}F_l(z)dz
=\int_{\Gamma_j}\left(\partial_{y}\omega_l\,dx-\partial_{x}\omega_l\,dy\right)\in\mathbb{R},~~j,l\in\{1,\cdots,k-1\}.
\end{equation}

The arguments in \cite[Chapter V]{B} provide the key property that the matrix $\{a_{jl}\}$ is real symmetric and invertible. Thus there exists a sequence of real numbers $\{\lambda_l\}_{l=1}^{n-1}$ such that
$$b_j=\sum_{l=1}^{n-1}\lambda_l\cdot a_{jl}\ \Leftrightarrow\ \int_{\Gamma_j}F(z)\,dz=\sum_{l=1}^{n-1} \lambda_{l} \cdot\int_{\Gamma_j}F_l(z)\,dz,$$ 
for any $j\in\{1,\cdots,k-1\}$. Hence, we denote $G(z)\triangleq F(z)-\sum_{l=1}^{n-1}\lambda_{l}\cdot F_l(z)$ and directly derive that 
\begin{equation}\notag
\int_{\Gamma_j}G(z)\,dz=0,\quad \mathrm{for \ any \ } j\in\{1,\cdots,k-1\},
\end{equation}
which means that for any smooth closed curve $\Gamma$ in $\Omega$, one has $\int_{\Gamma}G(z)\,dz=0$. Thus according to \cite[Chapter V]{B}, $G(z)$ is free of period and the single-valued primitive $H(z)$ of $iG(z)$ satisfying  $\partial_z H(z)=iG(z)$ is well defined in $\Omega$. 

In particular, let $h(z)\triangleq\mathrm{Re} H(z)$ be the real part of $H(z)$, then the definition of $G(z)$ together with \eqref{cc} and Cauchy-Riemann equations guarantees that 
\begin{equation}\label{g1}
\begin{split} 
\partial_z H(z)&=\partial_{x}h-i\,\partial_{y}h\\
&=\left(u_2+\sum_{l=1}^{n-1}\lambda_l\cdot\partial_{x}\omega_l\right)+i\left(u_1-\sum_{l=1}^{n-1}\lambda_l\cdot\partial_{y}\omega_l\right).
\end{split}
\end{equation}
In view of the above equation, we argue that
$$\partial_{x}h=u_2+\sum_{l=1}^{n-1}\lambda_l\cdot\partial_{x}\omega_l,~~\partial_{y}h=-u_1+\sum_{l=1}^{n-1}\lambda_l\cdot\partial_{y}\omega_l.$$
Moreover \eqref{cr}$_3$ together with \eqref{00} shows that the tangential derivatives of $h$ vanish on $\partial\Omega$, since 
$$n^{\bot} \cdot \nabla h=u \cdot n+\sum_{l=1}^{n-1} \lambda_{l}\,(n^\bot \cdot \nabla \omega_{l})=0,\ \mathrm{on}\ \partial\Omega.$$ 
Hence there exist real numbers $\{\beta_{l}\}_{l=0}^{k-1}$, such that $h\equiv\beta_{l}$ on $\Gamma_{l}$. Note that $h$ is harmonic in $\Omega$, then the maximum principle (Lemma \ref{L24}) provides that  
\begin{equation}\label{gg}
h=\beta_0\omega_0+\sum_{l=1}^{n-1}\beta_l\,\omega_l=\beta_0+\sum_{l=1}^{n-1}(\beta_l-1)\,\omega_l,~~~~\mathrm{since}\ \sum_{j=0}^{k-1}\omega_j\equiv 1.
\end{equation}

Substituting \eqref{gg} into \eqref{g1} implies that  
\begin{equation}\begin{split}\notag
u_{2}+i u_{1}&=\partial_{z}H(z)-\sum_{l=1}^{n-1} \lambda_{l}\,(\partial_{x}\omega_l-i\,\partial_{y}\omega_l)\\
&=2\partial_{z} h-2 \sum_{l=1}^{n-1} \lambda_{l}\,\partial_{z} \omega_{l} =\sum_{l=1}^{n-1}c_l\,\partial_{z} \omega_{l},
\end{split}\end{equation}
with $c_l\triangleq 2\,(\beta_{l}-\lambda_{l}-1)$, which also gives  
$$F(z)=u_1-iu_2=i\,\partial_z\omega,~~\omega\triangleq \sum_{l=1}^{n-1}c_l\,\partial\omega_l.$$
The proof is therefore completed.
\end{proof}
\begin{remark}
By deleting a set of smooth curves $\{\gamma_j\}_{j=1}^{k-1}$ from $\Omega$, the classical arguments in \cite{RT} indicate that $u=\nabla\Phi$ in the remaining simply connected domain $\Omega\setminus(\bigcup_{j=1}^{k-1}\gamma_j)$. However, $\Phi$ is discontinuous across each curve $\gamma_i$, thus the regularity of it becomes worse as the number of components increasing. In contrast of it, the potential $\omega$ in Lemma \ref{pr1} is always smooth over $\overline{\Omega}$.
\end{remark}

The Cauchy-Riemann systems \eqref{cr} are closely related with the div-curl type estimates in the multi-connected domains:
\begin{equation}\label{C301}
\|\nabla u\|_{L^p}\leq C(\|\mathrm{div}u\|_{L^p}+\|\mathrm{curl} u\|_{L^p})~~~~\mathrm{for}\ u\in W^{1,p}\ \mathrm{with}\ u\cdot n\big|_{\partial\Omega}=0.
\end{equation}
As shown in \cite{VW}, \eqref{C301} is not valid
for multi-connected domains, and some extra terms are required on the right hand side of \eqref{C301}. We begin with a basic version of it.

\begin{lemma} 
Let $\Omega\subset\mathbb{R}^2$ be a bounded smooth domain, and $u\in W^{1,p}$ with $p\in(1,\infty)$ satisfy $u\cdot n=0\ \mathrm{on}\ \partial\Omega$, then there is a constant $C$ depending on $p$ and $\Omega$ such that
\begin{equation}\label{C302}
\|\nabla u\|_{L^p}\leq C\,(\|\mathrm{div}u\|_{L^p}+\|\mathrm{curl} u\|_{L^p}+\|u\|_{L^p}).
\end{equation}
\end{lemma}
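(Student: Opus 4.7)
I would prove \eqref{C302} by the standard localization argument combined with Calder\'on--Zygmund theory on $\mathbb{R}^2$. Fix a finite atlas $\{U_\alpha\}$ of $\overline{\Omega}$ in which each chart is either contained in $\Omega$ or is a boundary chart flattening $\partial\Omega$, together with a subordinate smooth partition of unity $\{\chi_\alpha\}$ with $\sum_\alpha\chi_\alpha\equiv 1$. It then suffices to control $\|\nabla(\chi_\alpha u)\|_{L^p}$ on each patch and sum.

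\textbf{Interior patches.} When $\mathrm{supp}\,\chi_\alpha\subset\Omega$, the field $v:=\chi_\alpha u$ extended by zero is compactly supported in $\mathbb{R}^2$ and satisfies the identity $\Delta v=\nabla\,\mathrm{div}\,v+\nabla^\bot\mathrm{curl}\,v$. Inverting the Laplacian via the Newtonian potential and using the $L^p$-boundedness of the Riesz transforms for $1<p<\infty$ yields
\[
\|\nabla v\|_{L^p(\mathbb{R}^2)}\le C\bigl(\|\mathrm{div}\,v\|_{L^p(\mathbb{R}^2)}+\|\mathrm{curl}\,v\|_{L^p(\mathbb{R}^2)}\bigr).
\]
Expanding $\mathrm{div}(\chi_\alpha u)=\chi_\alpha\mathrm{div}\,u+\nabla\chi_\alpha\cdot u$ and analogously for the curl absorbs all commutator terms into $\|u\|_{L^p}$ and gives the desired local bound on an interior patch.

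\textbf{Boundary patches.} For a boundary chart $U_\alpha$ I would choose a smooth diffeomorphism $\Phi:U_\alpha\to B^+=\{|y|<1,\,y_2>0\}$ that flattens $U_\alpha\cap\partial\Omega$ to $\{y_2=0\}$, and pull back $u$ to a vector field $\tilde u$ on $B^+$ so that the condition $u\cdot n=0$ translates to $\tilde u_2=0$ on $\{y_2=0\}$. I would then extend $\chi_\alpha\tilde u$ to the full disc $B$ by reflecting the tangential component $\tilde u_1$ evenly and the normal component $\tilde u_2$ oddly across $\{y_2=0\}$. Since $\tilde u_2$ vanishes on the axis, the extension is continuous, and its distributional divergence and curl contain no singular measure concentrated on $\{y_2=0\}$; up to lower-order terms arising from the Jacobian of $\Phi^{-1}$, they equal the reflections of the original $\mathrm{div}\,\tilde u$ and $\mathrm{curl}\,\tilde u$. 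Applying the interior estimate on $B$ and changing variables back to $U_\alpha$ then yields the patch bound.

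\textbf{Assembly and main obstacle.} Summing over $\alpha$ and collecting the cut-off commutators and the first-order Jacobian terms as a multiple of $\|u\|_{L^p}$ produces \eqref{C302}. The delicate step will be the reflection argument of Step 2: it is precisely the hypothesis $u\cdot n=0$ that prevents a surface-delta from appearing in the distributional derivatives of the reflected field, so that the planar $L^p$-theory applies. Without this boundary condition, the normal jump of the tangential derivative would survive across $\{y_2=0\}$ and the reduction to the whole plane would fail.
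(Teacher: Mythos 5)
Your proposal takes a genuinely different route from the paper. The paper simply observes that the first-order system $\{\mathrm{div}\,u=f,\ \mathrm{curl}\,u=g,\ u\cdot n|_{\partial\Omega}=0\}$ is an elliptic boundary value problem satisfying the complementing condition, and invokes the Agmon--Douglis--Nirenberg $L^p$ estimates directly. You instead set out to re-derive the estimate from scratch via partition of unity, Riesz transforms on $\mathbb{R}^2$ for interior charts, and an even/odd reflection across a flattened boundary. The interior step and the reflection step (given a flat half-ball problem with the clean Cauchy--Riemann structure) are sound, and the observation that $u\cdot n=0$ kills the surface delta in the odd extension is exactly the right point.

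However, there is a genuine gap in the boundary-chart step. After you push $u$ forward by a general smooth flattening diffeomorphism $\Phi$ so that $u\cdot n=0$ becomes $\tilde u_2=0$, the operator $\mathrm{div}$ does transform as $\mathrm{div}_x u=\mathrm{div}_y\tilde u+(\text{zeroth order in }\tilde u)$, but $\mathrm{curl}$ does \emph{not}: its principal part becomes $\big[(D\Psi)^2_j(D\Phi)^k_1-(D\Psi)^1_j(D\Phi)^k_2\big]\partial_{y^k}\tilde u^j$ with $\Psi=\Phi^{-1}$, which coincides with $\partial_1\tilde u^2-\partial_2\tilde u^1$ only when $D\Phi$ is a conformal (similarity) matrix. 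For a generic smooth flattening these extra terms are \emph{first-order} with variable coefficients, not the lower-order Jacobian terms you describe, so the reduction to the constant-coefficient div--curl system on the half-ball is not immediate. This is fixable by one of the two standard devices --- either choose conformal (isothermal) boundary charts, which exist locally in 2D and make both $\mathrm{div}$ and $\mathrm{curl}$ transform by a scalar factor, or freeze the coefficients at the boundary point (normalizing $D\Phi(x_0)=\mathrm{Id}$) and absorb the small-oscillation perturbation on a sufficiently small patch --- but one of these steps must be added; as written the argument breaks at the point where you claim the transformed curl differs from the flat curl only by lower-order terms. The paper's citation of ADN sidesteps all of this, since the freezing-of-coefficients machinery is already built into that theory.
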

\begin{proof}
According to \cite{ADN}, the elliptic system
\begin{equation}\label{hc4}
\begin{cases}
\mathrm{div}u=f\quad &\mathrm{in}\ \Omega,\\
\mathrm{curl}u=g &\mathrm{in}\ \Omega,\\
u\cdot n=0 &\mathrm{on}\ \partial\Omega.
\end{cases}
\end{equation}
is well-posed and  the following elliptic estimate holds,
$$\|\nabla u\|_{L^p}\leq C(\Omega,p)(\|f\|_{L^p}+\|g\|_{L^p}+\|u\|_{L^p}),$$
which implies \eqref{C302} directly.
\end{proof}

\begin{remark}
Note that $($\ref{C302}$)$ is derived from the elliptic estimates of the system \eqref{hc4},
where $u\cdot n|_{\partial\Omega}=0$ provides  proper boundary conditions. 
However, the following system
\begin{equation*}
\begin{cases}
\Delta u=\nabla\mathrm{div}u+\nabla^\bot\mathrm{curl}u\quad &\mathrm{in}\ \Omega,\\
u\cdot n=0\ &\mathrm{on}\ \partial\Omega,
\end{cases}
\end{equation*}
is not well-posed and fails to give $($\ref{C302}$)$ directly. 
Such example partly reflects the stronger properties of analytic functions than harmonic functions.
\end{remark}

Now let us make use of Lemma \ref{pr1} to improve the estimates \eqref{C302}.  

\begin{lemma}\label{le3.2}
Suppose that $\Omega$ is a $k$-connected domain satisfying \eqref{C101} and $\{\xi_j\}_{j=1}^{2k-3}$ are any $2k-3$ distinct points on $\overline\Omega$. We suppose that $u\in W^{1,p} \ (p>2)$ satisfies $u\cdot n=0$ on $\partial\Omega$, then there is a constant $C$ depending on $p$ and $\Omega$, such that
\begin{equation}\label{g4} 
\|\nabla u\|_{L^p}\leq C \bigg(\|\mathrm{div}u\|_{L^p}+\|\mathrm{curl}u\|_{L^p}+\sum_{j=1}^{2k-3}\big|u(\xi_j)\big|\bigg).
\end{equation}
In particular, for the case $p=2$, the function $u\in W^{1,2}(\Omega)$ is not point-wisely defined, thus \eqref{g4} can be modified as 
 \begin{equation}\label{g44} 
\|\nabla u\|_{L^2}\leq C \bigg(\|\mathrm{div}u\|_{L^2}+\|\mathrm{curl}u\|_{L^2}+\sum_{j=1}^{2k-3}\big|\int_{I_j}(u\cdot n^\bot)dS\big|\bigg),
\end{equation}
where $n^\bot$ is the unit tangential vector on $\partial\Omega$ and $\{I_j\}_{j=1}^{2k-3}$ are mutually disjoint intervals on $\partial\Omega$ with arbitrary length.
\end{lemma}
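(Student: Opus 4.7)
The plan is to prove \eqref{g4} by a Peetre-type contradiction argument combining the compact embedding $W^{1,p}\hookrightarrow L^p$, the basic estimate \eqref{C302}, the kernel description of Lemma \ref{pr1}, and the critical-point bound of Lemma \ref{LL24}. Suppose for contradiction that no such constant $C$ exists; then one extracts a sequence $\{u_n\}\subset W^{1,p}$ with $u_n\cdot n|_{\partial\Omega}=0$, $\|\nabla u_n\|_{L^p}=1$, and $\|\mathrm{div}\,u_n\|_{L^p}+\|\mathrm{curl}\,u_n\|_{L^p}+\sum_{j=1}^{2k-3}|u_n(\xi_j)|\to 0$. A preliminary rescaling step first secures the uniform bound $\|u_n\|_{L^p}\leq C$: otherwise $v_n:=u_n/\|u_n\|_{L^p}$ would satisfy $\|\nabla v_n\|_{L^p}\to 0$ with $v_n\cdot n|_{\partial\Omega}=0$, and a Rellich subsequence would converge in $L^p$ to a constant vector field $v_*$ tangent to $\partial\Omega$; since the outer normal $n$ spans every direction along $\partial\Omega$, $v_*\equiv 0$, contradicting $\|v_*\|_{L^p}=1$.

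With $\|u_n\|_{L^p}$ uniformly bounded, applying \eqref{C302} to the differences $u_n-u_m$ and invoking compactness $W^{1,p}\hookrightarrow L^p$ shows $\{u_n\}$ is Cauchy in $W^{1,p}$, so $u_n\to u^*$ strongly there. The limit $u^*$ inherits $\mathrm{div}\,u^*=\mathrm{curl}\,u^*=0$, $u^*\cdot n|_{\partial\Omega}=0$, and $\|\nabla u^*\|_{L^p}=1$; moreover, because $p>2$ provides the continuous embedding $W^{1,p}\hookrightarrow C(\overline\Omega)$, the point values pass to the limit and $u^*(\xi_j)=0$ for every $j=1,\ldots,2k-3$.

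Next, Lemma \ref{pr1} gives $u^*=\nabla^\bot\omega$ for some harmonic $\omega$ that is constant on each component $\Gamma_l$. Since $\|\nabla u^*\|_{L^p}=1\neq 0$, $\omega$ is non-constant, so its boundary constants $\{\beta_l\}$ are not all equal; Lemma \ref{LL24} then yields
\[
\sum_{\xi_i\in\Omega}m(\xi_i)+\tfrac{1}{2}\sum_{\xi_j\in\partial\Omega}m(\xi_j)=k-2.
\]
Since each interior critical point contributes at least $1$ and each boundary critical point at least $\tfrac{1}{2}$, $\omega$ has at most $2k-4$ distinct critical points in $\overline\Omega$. But $u^*(\xi_j)=0$ means $\nabla\omega(\xi_j)=0$ at each of the $2k-3$ distinct points $\xi_j$—a contradiction that closes the proof of \eqref{g4}.

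For \eqref{g44} in the borderline case $p=2$, point evaluations are not continuous on $W^{1,2}$ in two dimensions, so they are replaced by the trace functionals $u\mapsto\int_{I_j}u\cdot n^\bot\,dS$, which remain continuous via the standard trace theorem. The same scheme applies: the limit $u^*=\nabla^\bot\omega$ satisfies $u^*\cdot n^\bot|_{\partial\Omega}=-\partial_n\omega$, so $\int_{I_j}\partial_n\omega\,dS=0$ for each of the $2k-3$ disjoint intervals. Since $\partial_n\omega$ is continuous on $\partial\Omega$, any interval with vanishing integral must contain a sign change, hence a zero of $\partial_n\omega$; the intervals being disjoint, this produces at least $2k-3$ distinct zeros of $\partial_n\omega$ on $\partial\Omega$. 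But these zeros coincide with the boundary critical points of $\omega$ (since $\omega$ is constant on each $\Gamma_l$), which number at most $2k-4$ by Lemma \ref{LL24}—contradiction. The central obstacle throughout is this critical-point counting: the bound $2k-4$ from Lemma \ref{LL24} is tight, and $2k-3$ is precisely the number of extra scalar constraints needed to ensure no nontrivial element of the $(k-1)$-dimensional kernel from Lemma \ref{pr1} can satisfy them all simultaneously.
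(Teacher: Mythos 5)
Your proof is correct and follows essentially the same route as the paper's: a compactness/contradiction argument with $\|\nabla u_n\|_{L^p}=1$ producing a nontrivial solution $u^*=\nabla^\bot\omega$ of the homogeneous Cauchy--Riemann system that vanishes at the $2k-3$ distinct points $\xi_j$, contradicting the at-most-$2k-4$ critical points of $\omega$ furnished by Lemma \ref{LL24} via the representation in Lemma \ref{pr1}, with the same intermediate-value-theorem reduction for the $p=2$ case. Your preliminary rescaling step securing the uniform bound $\|u_n\|_{L^p}\leq C$ (equivalently, a Poincar\'e inequality under $u\cdot n|_{\partial\Omega}=0$) and your upgrade from weak to strong $W^{1,p}$ convergence are welcome clarifications of points the paper leaves implicit, but they do not change the structure of the argument.
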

\begin{proof}
Let us first illustrate that $u=0$ is the unique strong solution to the following homogeneous Cauchy-Riemann system:
\begin{equation}\label{g5}
\begin{cases}
\mathrm{div}u=0\ \mathrm{in}\ \Omega,\\
\mathrm{curl}u=0\ \mathrm{in}\ \Omega,\\
 u\cdot n\big|_{\partial\Omega}=0,\ u(\xi_j)=0~~\mathrm{for}\ j\in{1,\cdots,2k-3}.
\end{cases}
\end{equation}
In fact, if $u$ is a non-trivial strong solution to \eqref{g5}, then Lemma \ref{pr1} shows that there exists a non-constant harmonic function $\omega$ which is constant on each component of $\partial\Omega$, such that 
$$F(z)=u_1-iu_2=i\,\partial_z\omega.$$
Thus, in view of \eqref{g5}$_3$, we have $|u(\xi_j)|=|\nabla\omega(\xi_j)|=0$ for $j\in\{1,\cdots,2k-3\}$, which in particular implies that $\{\xi_j\}_{j=1}^{2k-3}$ are critical points of the harmonic function $\omega$ on $\partial\Omega$. However, Lemma \ref{LL24} enforces that
\begin{equation*}
k-2\geq \sum_{\xi_j\in\Omega} m(\xi_j)+\frac{1}{2}\sum_{\xi_j\in\partial\Omega}m(\xi_j)\geq k-3/2,
\end{equation*}
since the multiplicity of $\xi_j$ is at least one. The above inequality yields the contradiction, thus we must have $u\equiv 0$, and the uniqueness assertion of \eqref{g5} is completed.

Now we argue \eqref{g4} by contradictions. If \eqref{g4} fails, then there is a sequence of functions $\{u_m\}_{m\in\mathbb{N}}\subset W^{1,p}$ with $u_m\cdot n=0$ on $\partial\Omega$, such that 
\begin{equation}\notag
\|\nabla u_m\|_{L^p}^2> m\left(\|\mathrm{div}u_m\|_{L^p}^2+\|\mathrm{curl}u_m\|_{L^p}^2+\sum_{j=1}^{2k-3}\big|u_m(\xi_j)\big|^2\right).
\end{equation}
We normalize that $\|\nabla u_m\|_{L^p}=1$ and infer that
\begin{equation}\label{dc}\lim_{m\rightarrow\infty}
\bigg(\|\mathrm{div}u_m\|_{L^p}+\|\mathrm{curl}u_m\|_{L^p}+\sum_{j=1}^{2k-3}\big|u_m(\xi_j)\big|^2\bigg)=0,
\end{equation}
which along with \eqref{C302} leads to
\begin{equation}\label{dc1}\liminf_{m\rightarrow\infty}\|u_m\|_{L^p}\geq C^{-1}.\end{equation}
Note that Sobolev compact embedding theorem and \eqref{dc} ensure us to extract a subsequence   $\{u_{m_j}\}_{j\in\mathbb{N}}$ such that
$$u_{m_j}\rightarrow u\ \mathrm{in} \ W^{1,p} \ \mathrm{weakly},~~u_{m_j}\rightarrow u\  \mathrm{in} \ L^p,$$  
with $u$ satisfying \eqref{g5}. However, \eqref{dc1} implies that $\| u\|_{L^p}\geq C^{-1}$, which contradicts the uniqueness assertion of \eqref{g5}. Thus \eqref{g4} is valid.

To illustrate \eqref{g44}, we observe that if $u$ is continuous and  
$$\int_{I_j}(u\cdot n^\bot)dS=0,$$ 
then there is some $\xi\in I_j$ such that $u(\xi)=0.$ In fact, the intermediate value theorem ensures that we must have $u(\xi)\cdot n^\bot=0$ for some $\xi\in I_j$, however the boundary conditions also give $u(\xi)\cdot n=0$, which indicates $u(\xi)=0$. 
These arguments together with \eqref{g4} imply that $u\equiv0$ is the unique solution to  the Cauchy-Riemann system:
\begin{equation*} 
\begin{cases}
\mathrm{div}u=0\ \mathrm{in}\ \Omega,\\
\mathrm{curl}u=0\ \mathrm{in}\ \Omega,\\
 u\cdot n\big|_{\partial\Omega}=0,\ \int_{I_j}(u\cdot n^\bot)dS~~\mathrm{for}\ j\in{1,\cdots,2k-3}.
\end{cases}
\end{equation*}
The rest of proof is the same as above. We therefore finish the lemma.
\end{proof}

\begin{remark}
The $L^p$ estimates of the multi-solvable elliptic systems usually contains lower order norms over small open \textbf{neighbours} on the right hand side (see \cite[Chapter IV]{ADN} and \cite{Aramaki2014Lp}). However, the estimates \eqref{g4} only require finite distinct \textbf{points}, which is a rather interesting fact.
\end{remark}

Next we provide a series of div-curl estimates as  corollaries of Lemma \ref{le3.2}.  

\begin{lemma}\label{le3.4}
Let $\Omega$ be a $k$-connected domain satisfying \eqref{C101}, and $u\in W^{1,p}(p\geq 2)$ with $u\cdot n=0$ on $\partial\Omega$. The assertions below are true:

$1)$. Let $K$ be a non-negative smooth function on $\partial\Omega$ which is not  identically $0$, then there is a constant $C$ depending only on $\Omega$, $p$, and $K$, such that
\begin{equation}\label{gg1}
\|\nabla u\|_{L^p}^2\leq C \bigg(\|\mathrm{div}u\|_{L^p}^2+\|\mathrm{curl}u\|_{L^p}^2+\int_{\partial\Omega}K|u|^2\,dS\bigg).
\end{equation}

$2)$. Let $\rho$ be a non-negative function on $\Omega$ satisfying
\begin{equation}\label{g66}
\int_\Omega\rho\,dx\geq M_1,~~\int_{\Omega}\rho^r\, dx\leq M_2,
\end{equation}
for some $M_1,M_2>0$ and $r\in(1,\infty)$, then  there is a constant $C$ depending only on $\Omega$, $p$, $r$, $M_1$, and $M_2$, such that 
 \begin{equation}\label{g6} 
\|\nabla u\|_{L^p}\leq C\left(\|\mathrm{div}u\|_{L^p}+\|\mathrm{curl}u\|_{L^p}+\|\rho^{1/2} u\|_{L^2}\right).
\end{equation}
\end{lemma}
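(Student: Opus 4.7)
The plan is to establish both estimates by the same contradiction-with-compactness scheme used in the proof of Lemma \ref{le3.2}, combining the characterization of Cauchy--Riemann solutions from Lemma \ref{pr1} with a unique-continuation argument for the harmonic potential $\omega$.

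For part 1), I would suppose \eqref{gg1} fails and produce a sequence $\{u_m\}\subset W^{1,p}$ with $u_m\cdot n=0$, $\|\nabla u_m\|_{L^p}=1$, and
\begin{equation*}
\|\mathrm{div}\,u_m\|_{L^p}^2+\|\mathrm{curl}\,u_m\|_{L^p}^2+\int_{\partial\Omega}K|u_m|^2\,dS\longrightarrow 0.
\end{equation*}
Using \eqref{C302}, I would first verify that $\{u_m\}$ is bounded in $W^{1,p}$ (the alternative $\|u_m\|_{L^p}\to\infty$ is excluded after rescaling $v_m=u_m/\|u_m\|_{L^p}$: the weak limit of $v_m$ would be a constant vector field tangent to $\partial\Omega$, necessarily zero, which contradicts $\|v_m\|_{L^p}=1$). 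Compactness of $W^{1,p}\hookrightarrow L^p(\Omega)$ and of the trace $W^{1,p}\hookrightarrow L^2(\partial\Omega)$ then yields, along a subsequence, $u_m\rightharpoonup u$ in $W^{1,p}$ with $u_m\to u$ in $L^p(\Omega)$ and in $L^2(\partial\Omega)$, and $\|u\|_{L^p}\geq C^{-1}$ by \eqref{C302}. The limit $u$ solves the homogeneous Cauchy--Riemann system \eqref{cr} and satisfies $\int_{\partial\Omega}K|u|^2\,dS=0$. Lemma \ref{pr1} gives $u=\nabla^\bot\omega$ for a harmonic $\omega$ constant on each $\Gamma_j$. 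Since $K\not\equiv 0$, there is an open arc $I\subset\Gamma_j$ on which $u$ vanishes; writing $u\cdot n^\bot=\partial_n\omega$, this means $\omega\equiv\mathrm{const}$ together with $\partial_n\omega\equiv 0$ on $I$. Holmgren's uniqueness theorem for the Laplacian then forces $\omega\equiv\mathrm{const}$ throughout $\Omega$, so $u\equiv 0$, contradicting $\|u\|_{L^p}\geq C^{-1}$.

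For part 2), I would run the analogous argument. A normalized sequence $\{u_m\}$ violating \eqref{g6} produces, after the same compactness step, a limit $u$ solving \eqref{cr} with $\rho^{1/2}u=0$ a.e.\ in $\Omega$ and $\|u\|_{L^p}\geq C^{-1}$. The hypothesis $\int_\Omega\rho\,dx\geq M_1>0$ ensures $|\{\rho>0\}|>0$, hence $u=\nabla^\bot\omega$ vanishes on an interior set of positive Lebesgue measure. Real-analyticity of $\nabla\omega$ then forces $\nabla\omega\equiv 0$, so $u\equiv 0$, again a contradiction.

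The only delicate step is the unique-continuation at the end of each case: extracting $\omega\equiv\mathrm{const}$ from Cauchy data living on an open arc of $\partial\Omega$ (case 1, Holmgren) or from vanishing of $\nabla\omega$ on an interior set of positive measure (case 2, real-analyticity of harmonic functions). The surrounding compactness, normalization, and trace-convergence ingredients carry over directly from the proof of Lemma \ref{le3.2}.
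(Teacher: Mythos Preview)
Your overall strategy---contradiction plus compactness, followed by a uniqueness argument for the limiting Cauchy--Riemann solution---matches the paper's. The difference lies in how you establish uniqueness. The paper simply reuses the estimate \eqref{g4} from Lemma~\ref{le3.2}: once $u$ vanishes on an open arc of $\partial\Omega$ (part~1) or on an interior set of positive measure (part~2), one can select $2k-3$ distinct points $\xi_j$ at which $u(\xi_j)=0$, and \eqref{g4} immediately forces $\nabla u\equiv 0$. Your Holmgren / real-analyticity route is equally valid and more self-contained, but the paper's shortcut is available precisely because Lemma~\ref{le3.2} has already packaged the needed unique continuation into an inequality.

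There is, however, a genuine gap in your treatment of part~2). The constant $C$ in \eqref{g6} must depend only on $M_1$, $M_2$, $r$ and not on the particular density $\rho$, so in the contradiction argument the density must vary along with $u$: you need a sequence $(\rho_m,u_m)$ with each $\rho_m$ satisfying \eqref{g66}, and then extract $\rho_m\rightharpoonup\rho$ weakly in $L^r$ together with $u_m\to u$ strongly in $L^{2r'}$ (from the compact embedding $W^{1,p}\hookrightarrow L^{2r'}$, $r'=r/(r-1)$), so that $\int_\Omega\rho_m|u_m|^2\,dx\to\int_\Omega\rho|u|^2\,dx$. The limit $\rho$ still satisfies \eqref{g66} by weak convergence of $\int_\Omega\rho_m\,dx$ and weak lower semicontinuity of the $L^r$ norm, and then your analyticity argument (or the paper's appeal to \eqref{g4}) finishes the job. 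As written, treating $\rho$ as fixed, you would only obtain a constant $C(\rho)$ rather than the uniform $C(M_1,M_2)$ the lemma asserts; this is exactly the point the paper handles explicitly in its proof.
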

\begin{proof}

1). Let $u$ solve the following Cauchy-Riemann system,
\begin{equation}\label{CA302}
\begin{split}
&\begin{cases}
\mathrm{div}u=0\ \mathrm{in}\ \Omega,\\
\mathrm{curl}u=0\ \mathrm{in}\ \Omega,\\
u\cdot n=0\ \mathrm{on}\ \partial\Omega,~~\int_{\partial\Omega}K|u|^2\,dS=0.
\end{cases}\\
\end{split}
\end{equation}
Then $K\geq 0$ and \eqref{CA302} enforces $|u|\equiv 0$ on the open portion of $\partial\Omega$ where $K>0$, thus \eqref{g4} implies $u\equiv 0$ and gives the uniqueness assertion of \eqref{CA302}. The proof of \eqref{gg1} now follows from the same contradiction arguments in Lemma \ref{le3.2}.

2). Suppose that $\rho$ satisfies \eqref{g66} and $u$ solves the next Cauchy-Riemann system,
\begin{equation}\label{CA303} 
\begin{split}
&\begin{cases}
\mathrm{div}u=0\ \mathrm{in}\ \Omega,\\
\mathrm{curl}u=0\ \mathrm{in}\ \Omega,\\
u\cdot n=0\ \mathrm{on}\ \partial\Omega,~~\|\rho^{1/2} u\|_{L^2}=0.
\end{cases}
\end{split}
\end{equation}
Then \eqref{g66} and \eqref{CA303} ensure that $|u|=0$ on the subset $\{\rho>0\}$ which has  positive measure, thus \eqref{g4} yields $u\equiv 0$ and provides the uniqueness assertion of \eqref{CA303}.

Now suppose that \eqref{g6} is false, then there is a sequence $\{(\rho_m,u_m)\}_{m\in\mathbb{N}}$ with $u_m\in W^{1,p}$,  $u_m\cdot n|_{\partial\Omega}=0$, and $\rho_m$ satisfying \eqref{g66}, such that 
\begin{equation}\notag
\|\nabla u_m\|_{L^p}> m\left(\|\mathrm{div}u_m\|_{L^p}+\|\mathrm{curl}u_m\|_{L^p}+\|(\rho_m)^{1/2} u_m\|_{L^2}\right).
\end{equation}
We normalize $\|\nabla u_m\|_{L^p}=1$ and infer that
\begin{equation}\label{cm}
\lim_{m\rightarrow\infty}\left(\|\mathrm{div}u_m\|_{L^p}+\|\mathrm{curl}u_m\|_{L^p}+\|(\rho_m)^{1/2} u_m\|_{L^2}\right)=0.\end{equation}
Moreover, we deduce from \eqref{C302} that
\begin{equation}\label{CCC429}
\liminf_{n\rightarrow\infty}\|u_m\|_{L^p}\geq C^{-1}.
\end{equation}
Sobolev compact embedding theorem along with \eqref{g66} and \eqref{cm} provides a subsequence $\{(\rho_{m_j},u_{m_j})\}_{j\in\mathbb{N}}$ such that,
$$\rho_{m_j}\rightarrow\rho\ \mathrm{in}\ L^{r}\ \mathrm{weakly},~~u_{m_j}\rightarrow u\ \mathrm{in}\ W^{1,p}\ \mathrm{weakly},~~ u_{m_j}\rightarrow u\ \mathrm{in}\ L^{2r'},$$  
with $r^{-1}+{r'}^{-1}=1$, and $(\rho,u)$ satisfying \eqref{g66} and \eqref{CA303}. But \eqref{CCC429} implies that $\|u\|_{L^p}\geq C^{-1},$ which contradicts the uniqueness assertion given above. Thus \eqref{g6} holds and the proof of the lemma is finished.
\end{proof}

To end up this subsection, we state a modified version of  2) in Lemma \ref{le3.4}.
\begin{lemma}\label{a004}
Under the conditions of Lemma \ref{le3.4}, there exist positive constants $\tilde \nu$ and $C_1$ depending only  on  $\Omega$, $r$, $M_1$, and $M_2$,  such that for any $\nu\in (0,\tilde \nu)$,
\begin{equation}\notag
\int_\Omega|u|^{\nu}|\nabla u|^2dx\leq C_1\left(\int_\Omega|u|^{\nu}\big((\mathrm{div}u)^2+(\mathrm{curl}u)^2)\,dx+\int_\Omega\rho|u|^{2+\nu}dx\right).
\end{equation}
\end{lemma}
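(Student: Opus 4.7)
The plan is to deduce the weighted estimate from the unweighted version \eqref{g6} by applying it with $p=2$ to the rescaled field $v\triangleq|u|^{\nu/2}u$, and then absorbing the resulting $\nu$-small error into the left-hand side. The boundary condition $v\cdot n=|u|^{\nu/2}(u\cdot n)=0$ on $\partial\Omega$ is automatic, and $\rho$ still satisfies \eqref{g66}, so Lemma \ref{le3.4} part $2)$ will deliver (after squaring and applying $(a+b+c)^2\leq 3(a^2+b^2+c^2)$)
\begin{equation*}
\|\nabla v\|_{L^2}^2\leq C\bigl(\|\mathrm{div}\,v\|_{L^2}^2+\|\mathrm{curl}\,v\|_{L^2}^2+\|\rho^{1/2}v\|_{L^2}^2\bigr),
\end{equation*}
with $C$ depending only on $\Omega,\,r,\,M_1,\,M_2$.

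The computational core is to expand each of the three terms on the right using the product rule. Writing $g\triangleq|u|^{\nu/2}$ and exploiting the identities $u_i\nabla u_i=|u|\nabla|u|$ and $\nabla g=(\nu/2)|u|^{\nu/2-1}\nabla|u|$, together with Kato's inequality $|\nabla|u||\leq|\nabla u|$, I expect to obtain the pointwise estimates
\begin{equation*}
|\nabla v|^2\geq|u|^\nu|\nabla u|^2,\qquad (\mathrm{div}\,v)^2+(\mathrm{curl}\,v)^2\leq 2|u|^\nu\bigl((\mathrm{div}\,u)^2+(\mathrm{curl}\,u)^2\bigr)+\nu^2|u|^\nu|\nabla u|^2,
\end{equation*}
together with the identity $|\rho^{1/2}v|^2=\rho|u|^{\nu+2}$. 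The first bound comes from the positivity of the cross term $\nu|u|^\nu|\nabla|u||^2$ in the expansion of $|\nabla v|^2$; the second follows from $\mathrm{div}(gu)=g\,\mathrm{div}\,u+\nabla g\cdot u$, $\mathrm{curl}(gu)=g\,\mathrm{curl}\,u+u\cdot\nabla^\bot g$ and the algebraic identity $(\nabla g\cdot u)^2+(u\cdot\nabla^\bot g)^2=|u|^2|\nabla g|^2$. Substituting into the previous display and selecting $\tilde\nu>0$ with $C\tilde\nu^2\leq 1/2$ will allow me to absorb the $\nu$-small term into the left for every $\nu\in(0,\tilde\nu)$, producing the claimed inequality with $C_1=4C$.

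I do not anticipate a deep obstacle here; the only real subtlety is to verify that $v\in W^{1,2}$ so that Lemma \ref{le3.4} is actually applicable. This is automatic when $u\in W^{1,p}$ for some $p>2$ via the Sobolev embedding $u\in L^\infty(\Omega)$. In the borderline case (or if one wants a single uniform argument), I would regularize by $v_\varepsilon\triangleq(|u|^2+\varepsilon^2)^{\nu/4}u$, whose smoothness is automatic, apply Lemma \ref{le3.4} to each $v_\varepsilon$, and then pass to the limit $\varepsilon\to 0^+$ using monotone and dominated convergence. No new analytic machinery beyond Lemmas \ref{pr1}--\ref{le3.4} is required.
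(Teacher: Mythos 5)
Your proposal is correct and follows essentially the same route as the paper: define $v=|u|^{\nu/2}u$, verify $v\cdot n|_{\partial\Omega}=0$, apply \eqref{g6} with $p=2$ to $v$, expand $\operatorname{div}v$, $\operatorname{curl}v$, $\nabla v$ by the product rule to get pointwise bounds analogous to the paper's \eqref{qa1}, and absorb the $O(\nu^2)$ error by choosing $\tilde\nu$ small. Your lower bound $|\nabla v|^2\geq|u|^\nu|\nabla u|^2$ is in fact slightly cleaner than the paper's, and the closing remark on approximating $v$ by $(|u|^2+\varepsilon^2)^{\nu/4}u$ when $u$ is merely $W^{1,2}$ is a sound way to justify the regularity that the paper leaves implicit.
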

\begin{proof} 
Direct computations show that
\begin{equation}\label{qa1}
\begin{split}
\left(\mathrm{div}(|u|^{\frac{\nu}{2}} u)\right)^2&
\leq 2|u|^\nu \big(\mathrm{div} u\big)^2+ \nu^2|u|^\nu|\nabla u|^2,\\
\left(\mathrm{curl}(|u|^{\frac{\nu}{2}} u)\right)^2
&\leq 2|u|^\nu \big(\mathrm{curl} u\big)^2+\nu^2|u|^\nu|\nabla u|^2,\\
|\nabla(|u|^{\frac{\nu}{2}} u)|^2&\geq\frac{1}{2}\,|u|^\nu|\nabla u|^2-\nu^2|u|^\nu|\nabla u|^2,
\end{split}
\end{equation}
 
Moreover, note that $ |u|^{\frac{\nu}{2}}u\cdot n\big|_{\partial\Omega}=0$, thus we apply \eqref{g6} to deduce  
\begin{equation*} 
\begin{split}
&\int_\Omega\left|\nabla(|u|^{\frac{\nu}{2}}u)
\right|^2dx\\
&\leq
\tilde{C}\int_\Omega\left(\left|\mathrm{div}(|u|^{\frac{\nu}{2}}u)\right|^2 +\left|\mathrm{curl}(|u|^{\frac{\nu}{2}}u)\right|^2\right)dx+C\int_\Omega\rho|u|^{2+\nu}dx,
\end{split}
\end{equation*} 
which together with \eqref{qa1} proves Lemma \ref{a004} by setting $\nu\leq (40\tilde{C}+10)^{-1/2}$.
\end{proof}

\subsection{The uniqueness of solution to stationary systems}\label{SS32}
\quad Let $\Omega$ be a $k$ connected domain satisfying \eqref{C101}. We  make full use of the assertions in Subsection \ref{SS31} to  investigate the stationary system on $\Omega$:
\begin{equation}\label{C303}
\begin{cases}
\mathrm{div}(\rho u)=0,\\
\mathrm{div}(\rho u\otimes u)+\nabla P=\mu\Delta u+\nabla\big((\mu+\lambda)\mathrm{div}u\big),\\
\end{cases}
\end{equation}
which is subject to the conditions
\begin{equation}\label{C304}
\begin{split}
\int_\Omega\rho\,dx=\int_\Omega\rho_0\,dx,~~
u\cdot n|_{\partial\Omega}=0,~~\mathrm{curl}u|_{\partial\Omega}=-Ku\cdot n^\bot.
\end{split}
\end{equation}
For convenience, we take $P=\rho^2$ and prove Theorem \ref{L11} in different circumstances.
 
\textit{Proof of Theorem \ref{L11}.} We always assume that $(\rho,u)$ is a smooth solution to the system \eqref{C303}--\eqref{C304} with $\rho$ containing no vacuum, then there are three assertions.

a). \textit{If $K$ is not identically $0$ on $\partial\Omega$, then the unique smooth solution to the system is the trivial one.}

Multiplying \eqref{C303}$_2$ by $u$ and integrating over $\Omega$ yield the standard energy estimates,
\begin{equation}\label{C305}
\int_\Omega\left((2\mu+\lambda)(\mathrm{div}u)^2+\mu(\mathrm{curl}u)^2\right)\,dx+\int_{\partial\Omega}K|u|^2\,dS=0.
\end{equation}
According to \eqref{gg1} and \eqref{C305}, we declare that
\begin{equation*}
\|\nabla u\|_{L^2}\leq C\int_\Omega\left((2\mu+\lambda)(\mathrm{div}u)^2+\mu(\mathrm{curl}u)^2\right)\,dx+C\int_{\partial\Omega}K|u|^2\,dS=0,
\end{equation*}
which together with \eqref{C306}$_3$ provides that $u\equiv 0$ in $\Omega$. Thus, \eqref{C303} reduces to
\begin{equation}\label{C319}
\nabla\rho^2=0,~~
\int_\Omega\rho\,dx=\int_\Omega\rho_0\,dx.
\end{equation}
We therefore deduce that the trivial solution
$$\rho\equiv\hat{\rho},~~u\equiv 0,$$
is the unique non-vacuum smooth solution to case a).

b). \textit{If $K\equiv 0$ and $\partial\Omega$ consists of two concentric circles, then the non-vacuum smooth solution to the system is not unique.}

For convenience, we assume that $\Omega=\{1/2<|z|<1\}$. In such case, the standard energy estimates \eqref{C305} reduce to
\begin{equation}\label{C307}
\int_\Omega\left((2\mu+\lambda)(\mathrm{div}u)^2+\mu(\mathrm{curl}u)^2\right)\,dx=0.
\end{equation}
Thus $u$ satisfies the Cauchy-Riemann equations on $\Omega$,
\begin{equation}\label{C306}
\begin{cases}
\mathrm{div}u=\mathrm{curl}u=0\ \mathrm{in}\ \Omega,\\
u\cdot n=0\ \mathrm{on}\ \partial\Omega.
\end{cases}
\end{equation}
According to Lemma \ref{pr1}, there is a harmonic function $\omega$ such that
\begin{equation}\label{CA301}
F(z)=u_1-iu_2=i\,\partial_z\omega,~~\omega\big|_{\{|z|=1/2\}}=C_0,~~\omega\big|_{\{|z|=1\}}=0,
\end{equation}
for some constant $C_0$.
Let us solve \eqref{C306} and \eqref{CA301} more accurately.
Note that for $z\in\partial\Omega$, $z=x+iy$ always lies in the normal direction of $\partial\Omega$, then the boundary conditions \eqref{C306} provide that
$$n\cdot u=x\cdot u_1+y\cdot u_2=\mathrm{Re}(z\cdot F(z))=0\ \mathrm{on}\ \partial\Omega.$$
According to the maximum principle of harmonic functions, we declare that
$$\mathrm{Re}(z\cdot F(z))\equiv 0\ \mathrm{in}\ \Omega.$$
Then Lemma \ref{L24} guarantees that for some constant $C_1$, we also have
$$\mathrm{Im}(z\cdot F(z))\equiv C_1\ \mathrm{in}\ \Omega,$$
which implies that $F(z)=u_1-iu_2=iC_1/z$. Turning back to real variables gives
\begin{equation}\label{C308}
u_1=C_1\cdot\frac{y}{R^2},~~u_2=-C_1\cdot\frac{x}{R^2},~~\omega=C_1\log R,~~R^2=x^2+y^2.
\end{equation}
Hence, the velocity field $u$ revolves around the central disc $\{z\leq 1/2\}$ and lies in the angular directions, while the module $|u|=C_1/R$ is axisymmetric and monotonically decreasing. 

Let us substitute \eqref{C308} into \eqref{C303} and determine $\rho$. Note that $\mathrm{div}u=0$ together with \eqref{C303}$_1$ leads to
\begin{equation}\label{C309}
u\cdot\nabla\rho=0\ \mathrm{in}\ \Omega,
\end{equation}
which means $\rho$ is constant along the angular directions, thus $\rho(z)=\rho(|z|)$ is axisymmetric as well. Moreover, $\mathrm{div}u=\mathrm{curl}u=0$ reduces \eqref{C303}$_2$ to
\begin{equation*}
\rho u\cdot\nabla u+\nabla P=0.
\end{equation*}
Note that $\mathrm{curl}u=0$ yields $\partial_iu_j=\partial_ju_i$ for $i,j=1,2$, thus we further deduce that
\begin{equation}\label{C312}
\begin{split}
\rho u\cdot\nabla u+\nabla P&=\rho u_i\cdot\partial_iu_j+\partial_j\rho^2\\
&=\rho u_i\cdot\partial_ju_i+\partial_j\rho^2\\
&=\rho\,\nabla(|u|^2/2+2\rho)=0.
\end{split}
\end{equation}
Since $\rho$ contains no vacuum, the above equation leads to
\begin{equation}\label{C310}
\rho=C_2-\frac{|u|^2}{4}=C_2-\frac{C_1^2}{4R^2}~~ \mathrm{in}\ \Omega,
\end{equation}
for some constant $C_2$. The representation in \eqref{C310} is axisymmetric thus satisfies \eqref{C309} as well, thus the non-trivial solutions to the system \eqref{C303}--\eqref{C304} are given by
\begin{equation*}
\begin{split}
\rho(z)=C_2-\frac{C_1^2}{4|z|^2},~~u_1(z)-iu_2(z)=\frac{iC_1}{z},~~z=x+iy.
\end{split}
\end{equation*}
In addition, we can choose $C_1$ and $C_2$ such that
\begin{equation*}
\begin{split}
\int_\Omega\rho\,dx=\frac{3\pi C_2}{4}-\frac{(\log2)\pi C_1^2}{2}=\int_\Omega\rho_0\,dx,~~C_2>C_1^2,
\end{split}
\end{equation*}
to ensure that \eqref{C304} holds and $\rho$ is strictly positive. The assertion for case b) is  finished.

c). \textit{If $K\equiv 0$ and $\Omega$ is any $k$-connected domain different from $\mathrm{b)}$, then the unique non-vacuum smooth solution to the system is the trivial one.}

Similar with case b), the energy estimate \eqref{C307} ensures that $u$ solves the Cauchy-Riemann system \eqref{C306} on $\Omega$, which changes \eqref{C303} into the system
\begin{equation}\label{C313}
\begin{cases}
u\cdot\nabla\rho=0,\\
\rho u\cdot\nabla u+\nabla P=0.
\end{cases}
\end{equation}
Note that $\rho$ is out of vacuum, thus the process \eqref{C312} applied to \eqref{C313}$_2$ yields that
$$ \nabla(|u|^2+4\rho)=0,$$
which means $\rho=(C-|u|^2/4)$. This explicit formula combined with \eqref{C313}$_1$ gives
$$u\cdot\nabla|u|^2=0.$$
Consequently, the velocity field $u$ necessarily satisfies the equations
\begin{equation}\label{C314}
\begin{cases}
\mathrm{div}u=\mathrm{curl}u=0\ \mathrm{in}\ \Omega,\\
u\cdot\nabla|u|^2=0\ \mathrm{in}\ \Omega,\\
u\cdot n=0\ \mathrm{on}\ \partial\Omega.
\end{cases}
\end{equation}
We declare that $u\equiv 0$ is the only smooth solution to \eqref{C314}, which along with \eqref{C319} also implies that \eqref{C303}--\eqref{C304} is uniquely solved by $(\hat\rho,0)$.

Let us argue by contradictions. We first consider the case that $\Omega$ is a 2-connected domain different from the concentric annulus, and assume that $u$ is a non-trivial solution to the system \eqref{C314}.  

According to Lemma \ref{L27}, $\Omega$ is conformally equivalent with some annulus domain $A=\{r<|z|<1\}$, and we can find a conformal mapping $\varphi$, which is regular up to the boundary and satisfies
$$\varphi:\overline{A}\rightarrow\overline{\Omega},~~w\mapsto z.$$
Considering the analytic function $G(w)=F\circ\varphi(w)$ defined on $A$,  we denote $v_1=\mathrm{Re}\,G$ and $v_2=-\mathrm{Im}\,G$, then according to Lemma \ref{L28},  the function $v=(v_1,v_2)$ solves the Cauchy-Riemann system
\begin{equation*} 
\begin{cases}
\mathrm{div}v=0\ \mathrm{in}\ A,\\
\mathrm{curl}v=0\ \mathrm{in}\ A,\\
v\cdot n=0\ \mathrm{on}\ \partial A.
\end{cases}
\end{equation*}
Then by virtue of \eqref{C308}, we must have $G(w)=i\,C_0/w $ for some $C_0\in\mathbb{R}\setminus\{0\}$. Setting $w=\psi(z)$ with $\psi=\varphi^{-1}$ implies that
\begin{equation}\label{C320}
F(z)=\frac{i\,C_0}{\psi(z)}.
\end{equation}
According to \eqref{cc}, the equation \eqref{C314}$_2$ is transformed into
\begin{equation*}
\begin{split}
0=u\cdot\nabla|u|^2&=(u_1\partial_x+u_2\partial_y)\,|u|^2\\
&=(u_1-iu_2)\,\partial_{\bar{z}}|u|^2+(u_1+iu_2)\,\partial_z|u|^2\\
&=2\,\mathrm{Re}\,(F\cdot\partial_{\bar{z}}|F|^2)\\
&=2\,\mathrm{Re}\,(F^2\cdot\overline{F'}).
\end{split}
\end{equation*}
Substituting \eqref{C320} into the above equation gives
$$ \mathrm{Re}(F^2\cdot\overline{F'})=\mathrm{Re}\left(\frac{-i\,C_0^3}{|\psi|^4}\cdot\overline{\psi'}\right)=0,$$
which means $\mathrm{Im}\psi'=0$, thus Lemma \ref{L24} and Lemma \ref{L28} imply that $\psi'(z)=\alpha$ for some $\alpha\in\mathbb{R}\setminus\{0\}$. Consequently $\varphi$ and $\psi$ are given by linear functions,
$$w=\psi(z)=\alpha\cdot z+\beta,~~z=\varphi(w)=\alpha^{-1}(w-\beta),$$
which enforces $\Omega$ to be an concentric annulus and contradicts our assumption on the shape of $\Omega$. Thus we must have $u\equiv 0$ and the uniqueness assertion for general 2-connected domains follows at once.

Next, we consider the case that $\Omega$ is a $k$-connected domain with $k\geq 3$. Suppose that $u$ solves \eqref{C314} and is not identically $0$, then according to Lemma \ref{pr1}, there is a harmonic function $\omega$ such that
\begin{equation}\label{C315}
F(z)=u_1-iu_2= 2i\,\partial_z\omega= \partial_y\omega +i\,\partial_x\omega,~~\omega\big|_{\Gamma_j}=\beta_j,~~j\in\{0,1,\cdots,k-1\},
\end{equation}
where $\{\beta_j\}_{j=0}^{k-1}$ are $k$ real constants not coincide.  
Thus we also derive that $u=\nabla^\bot\omega$.

Let us introduce the level sets of $\omega$ by $\mathcal{A}_\alpha\triangleq\{z\in\overline{\Omega}\,|\,\omega(z)=\alpha\}$ and the critical set of $\omega$ by $\mathcal{C}\triangleq\{z\in\overline{\Omega}\,|\,\nabla\omega(z)=0\}$. According to Lemma \ref{LL24}, $\mathcal{C}$ must be non-empty (note that $\mathcal{C}$ is empty on 2-connected domains, see \eqref{C308} for example), and for any $z_0\in\mathcal{C}$, the level set $\mathcal{A}_{\omega(z_0)}$ near $z_0$ consists of finite simple curves intersecting at $z_0$.

Now, we declare that for each $\zeta\in\overline{\Omega}\setminus\mathcal{C}$, the level sets $\mathcal{A}_{\omega(\zeta)}$ near $\zeta$ is exactly a portion of smooth curve. 
In fact, if $\zeta\in{\Omega}\setminus\mathcal{C}$, the declaration follows form the fact $\nabla^\bot\omega(\zeta)\neq 0$ and the implicit function theorem; if $\zeta\in\Gamma_j\setminus\mathcal{C}$, note that $\omega\equiv\beta_j$ on $\Gamma_j$ and the Schwarz reflection principle \cite[Chapter 2]{GT} ensures that $\omega$ can be extended to a harmonic function on the open neighbour of $\Gamma_j$, thus the implicit function theorem guarantees that, in the small neighbour of $\zeta$, the level set $\mathcal{A}_{\beta_j}$   completely lies in $\Gamma_j$, which finishes the declaration.

In particular, for $\zeta\in\overline{\Omega}\setminus\mathcal{C}$, let $\gamma(t):[-1,1]\rightarrow\mathcal{A}_{\omega(\zeta)}$ be a parametrization of the level curve passing through $\zeta$, which satisfies
$$\omega(\gamma(t))=\omega(\zeta),~~\gamma(0)=\zeta.$$
Taking the derivative with respect to $t$ and checking the value at $t=0$ yield 
\begin{equation}\label{C311}
\gamma'(0)\cdot\nabla\omega(\zeta)=0,
\end{equation}
moreover, the definition of $\nabla^\bot$ guarantees that $\nabla^\bot\omega\cdot\nabla\omega=0$, thus in view of \eqref{C311}, we must have $\gamma'(0)$ is parallel to $\nabla^\bot\omega(\zeta)$, which implies that the level sets are in fact the integral curves of $\nabla^\bot\omega$ on $\overline{\Omega}\setminus\mathcal{C}$.
 
Consequently, let us substitute \eqref{C315} into \eqref{C314}$_2$ and declare that
\begin{equation}\label{C316}
\nabla^\bot\omega\cdot\nabla\left(|\nabla\omega|^2\right)=0,
\end{equation}
which in particular means that the module $|u|=|\nabla\omega|$ is constant along path-connected components of each level set $\mathcal{A}_\alpha$ ($\alpha\in\mathbb{R}$) on $\overline{\Omega}\setminus\mathcal{C}$. 

Then, we illustrate that the critical point of $\omega$ can not lie in the interior of $\Omega$.
If it is not true, suppose that $z_0\in\mathcal{C}\cap\Omega$, then by virtue of the structure of the level set $\mathcal{A}_{\omega(z_0)}$ near $z_0$, we can choose a sequence $\{z_n\}_{n=1}^\infty$ lying in the same path-connected component of $\mathcal{A}_{\omega(z_0)}$, such that 
\begin{equation}\label{C317}
\lim_{n\rightarrow\infty}z_n=z_0,~~\lim_{n\rightarrow\infty}|\nabla\omega(z_n)|=|\nabla\omega(z_0)|=0.
\end{equation}
However, the critical points of $\omega$ must be isolated (Lemma \ref{LL24}) , thus we may assume $\{z_n\}_{n=1}^\infty\subset(\overline{\Omega}\setminus\mathcal{C})$, and \eqref{C316} guarantees that $|\nabla\omega(z_n)|$ must be a constant different form $0$, which contradicts \eqref{C317}.
Hence, no critical point lies in the interior.

Similarly we prove that $\partial\Omega$ contains no critical point of $\omega$ as well. Note that $\omega\equiv\beta_j$ on $\Gamma_j$, so we utilize the Schwarz reflection principle (\cite[Chapter 2]{GT}) again to declare that $|\nabla\omega|$ is continuous on $\overline\Omega$. Now suppose that $\nabla\omega(z_0)=0$ for some $z_0\in\Gamma_j\subset\partial\Omega$,   we can select a sequence $\{z_n\}_{n=1}^\infty\subset\Gamma_j$ lying in the same path-connected component of $\mathcal{A}_{\beta_j}$ such that
\begin{equation}\label{C318}
\lim_{n\rightarrow\infty}z_n=z_0,~~\lim_{n\rightarrow\infty}|\nabla\omega(z_n)|=|\nabla\omega(z_0)|=0.
\end{equation}
However, the critical point of $\omega$ is isolated on $\partial\Omega$, thus we may assume $\{z_n\}_{n=1}^\infty\subset(\overline{\Omega}\setminus\mathcal{C})$, and \eqref{C316} ensures that $|\nabla\omega(z_n)|$ is a constant different from $0$, which contradicts \eqref{C318}. Consequently, no critical point is contained in $\partial\Omega$ as well.

The above assertions implies that $\omega$ contains no critical point, which contradicts the fact that $\mathcal{C}$ is non-empty. Thus we must have $\omega\equiv C$ and $u\equiv 0$ on $\Omega$, which finishes the uniqueness assertion of case c).
The proof is therefore completed.
\thatsall

\section{Commutator estimates on multi-connected domains}\label{S4}
\quad In this section, we make detailed discussions on the commutator theory on multi-connected domains, which is the foundation of a priori estimates in Section \ref{S5}. Moreover, our method which is based on the model domains  seems applicable for other boundary value problems, so we carefully conclude it below.

Let us first provide the motivation. The commutator is a powerful tool when we deal with the subtle estimates on the global domains, however it is not directly available on the bounded domains. For example, following the classical method in Lions' monograph \cite[Chapter 5]{1998Mathematical}, we introduce the effective viscous flux, $$F\triangleq(2\mu+\lambda)\mathrm{div}u-(P-\hat P).$$

In the case of the Cauchy problem $\Omega=\mathbb{R}^2$, $F$ is given explicitly by
\begin{equation}\label{CC501}
\begin{split}
F&=\frac{\mathrm{d}}{\mathrm{d}t}\Delta^{-1}\mathrm{div}(\rho u)-
u_i\cdot\partial_i\Delta^{-1}\partial_j(\rho u_j)
+\Delta^{-1}\partial_{i}\partial_{j}(\rho u_i u_j)\\
&=\frac{\mathrm{d}}{\mathrm{d}t}\Delta^{-1}\mathrm{div}(\rho u)-
u_i\cdot\Delta^{-1}\partial_i\partial_j(\rho u_j)
+\Delta^{-1}\partial_{i}\partial_{j}(\rho u_i u_j)\\
&=\frac{\mathrm{d}}{\mathrm{d}t}\Delta^{-1}\mathrm{div}(\rho u)-[u_i,R_{ij}](\rho u_j),\\
\end{split}
\end{equation}
where we suppose $\Gamma(x,y)$ is the fundamental solution of $\Delta$ in $\mathbb{R}^2$ and define 
$$\Delta^{-1}f \triangleq\int_\Omega\Gamma(x,y)f(y)\,dy,~~R_{ij}\triangleq\Delta^{-1}\partial_i\partial_j.$$
The last term in \eqref{CC501} is a typical commutator studied by Coifman-Meyer \cite{CM}. The key step of \eqref{CC501} is the second line, where we have applied
\begin{equation}\label{CC502}
\partial_i\circ\Delta^{-1}=\Delta^{-1}\circ\partial_i~~\mathrm{in}\ \mathbb{R}^2. 
\end{equation} 
From one point of view, \eqref{CC502} is due to the symmetric property
\begin{equation}\label{CC503}
\partial_{x_i}\Gamma(x,y)+\partial_{y_i}\Gamma(x,y)=0.
\end{equation}

However, the above process fails in the bounded domain $\Omega$. In fact, if we still interpret 
$$\Delta^{-1}f\triangleq\int_\Omega N(x,y)f(y)\,dy,$$
where $N(x,y)$ is the Green's function on $\Omega$. Then the key symmetric property \eqref{CC502} is no longer valid since  
\begin{equation}\label{CC504}
\partial_{x_i}N(x,y)+\partial_{y_i}N(x,y)\neq 0, 
\end{equation}
which  violates \eqref{CC501} as well. Accordingly, the term given by \eqref{CC504}, which commutes $\partial_x$ and $\partial_y$ of Green's functions, lies in the central position of the commutator theory on bounded domains. 

Let us first study the structure of Green's functions near $\partial\Omega$ in Subsection \ref{SS41}, and then deduce the cancellation properties of the crucial term \eqref{CC504} in Subsection \ref{SS42}. At last in Subsection \ref{SS43}, we establish a type of commutators arguments on $\Omega$.
 
\subsection{The structure of Green's functions}\label{SS41}
\quad Let $\Omega$ be a $k$-connected domain satisfying:
\begin{equation}\label{CA414}
\partial\Omega=\bigcup_{i=0}^{k-1}\Gamma_i,~~\Gamma_i\cap\Gamma_j=\varnothing\ \  \mathrm{if}\ i\neq j,
\end{equation}
where $\Gamma_j$ are smooth simple closed curves. Setting $w=x_1+ix_2$ and $z=y_1+iy_2$ with $x=(x_1,x_2),\,y=(y_1,y_2)\in \Omega$, we discuss the principal part of the Green's function near each $\Gamma_j$.

Let us first study the model case that $\Omega$ is a $k$-connected bounded circular domain, whose each boundary component $\Gamma_j$ is a completed circle given by
\begin{equation}\label{CA402}
\Gamma_j=\partial B_j,~~B_j\triangleq\left\{z\,\big|\, |z-b_j|< r_j\right\},~~j=\{0,1,\cdots,k-1\}.
\end{equation}
We assume $B_0$ is the unit disc and $\{b_j\}_{j=0}^{k-1}\subset B_0$, then $\Omega=B_0\setminus\bigcup_{j=1}^{k-1}\overline{B_j}$. In addition, we also introduce \begin{equation}\label{CA401}
\mathbf{d}\triangleq\min \left\{\mathrm{dist}(\Gamma_i,\Gamma_j)\,\big|\,i\neq j\ \mathrm{with}\ i,j\in\{0,1,\cdots,k-1\}\right\},
\end{equation}
to describe the smallest distance between each holes.

The Green's function $N(z,w)$ for Neumann problems on $\Omega$ is given by
\begin{equation}\label{CC407}
N(z,w)=\Gamma(z,w)+
H(z,w),~~\Gamma(z,w)=\frac{1}{2\pi}\log|z-w|,
\end{equation}
where $H(z,w)$ is a harmonic function determined by the Neumann problem
\begin{equation}\label{CC408} 
\begin{cases}
\Delta H(\cdot,w)=0\ \mathrm{in}\ \Omega,\\
\partial_nH(\cdot,w)=-\partial_n\Gamma(\cdot,w)+l^{-1}\ \mathrm{on}\ \partial\Omega,
\end{cases}
\end{equation}
where $\partial_n$ is the outer normal derivative and $l=2\pi(\sum_i r_i+1)$ is the total length of $\partial\Omega$.  
Now the first lemma concerns the structure of $N(z,w)$ when $w$ is close to $\partial\Omega$.

\begin{lemma}\label{LL41}
Let $\Omega$ be the circular domain given by \eqref{CA414}--\eqref{CA402} and $N(z,w)$ be the Green's function given by \eqref{CC407}, if for some $j\in\{0,1,\cdots,k-1\}$, we have $\mathrm{dist}(w,\Gamma_j)\leq \mathbf{d}/4$, then $N(z,w)$ can be decomposed as
$$N(z,w)=N_j(z,w)+R_j(z,w).$$

The principal part $N_j(z,w)$ is the Green's function for the exterior domain $\mathbb{R}^2\setminus\overline{B_j}$, for $j\in\{1,\cdots,k-1\}$, and $N_0(z,w)$ is the Green's function for the unit disc $B_0$. The remaining terms $R_j(z,w)$ are regular functions on $\Omega$, in the sense that for any $s\in\mathbb{N}^+$, there is a constant $C$ depending on $s$, $\mathbf{d}$, and $\Omega$, such that
\begin{equation}\label{CA404}
\|\nabla^sR_j(\cdot,w)\|_{L^\infty}\leq C~~\mbox{for any }j\in\{0,1,\cdots,k-1\}.
\end{equation}
\end{lemma}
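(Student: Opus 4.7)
The strategy is to build each principal part $N_j$ via the method of images so as to reproduce exactly the singularity of $N$ at $z=w$ together with its Kelvin reflection across $\Gamma_j$. Then the difference $R_j:=N-N_j$ solves a Neumann boundary-value problem on $\Omega$ whose data is smooth and uniformly bounded in $w$, and standard elliptic regularity yields the desired $C^s$ bounds.

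For $j=0$ (unit disc $B_0$), the Neumann Green's function admits the classical formula
\begin{equation*}
N_0(z,w)=\tfrac{1}{2\pi}\log|z-w|+\tfrac{1}{2\pi}\log|1-z\bar w|+Q_0(z),
\end{equation*}
where $Q_0$ is an explicit smooth quadratic correction adjusting the Laplacian to $\delta_w-|B_0|^{-1}$, so that $\partial_n N_0$ is constant on $\Gamma_0$. For $j\ge 1$, the reflection $w^*_j:=b_j+r_j^2/\overline{(w-b_j)}$ of $w$ across $\Gamma_j$ lies inside $B_j$, hence outside $\Omega$, and
\begin{equation*}
N_j(z,w)=\tfrac{1}{2\pi}\log|z-w|+\tfrac{1}{2\pi}\log\bigl|(z-b_j)-r_j^2/\overline{(w-b_j)}\bigr|+Q_j(z)
\end{equation*}
realises the Neumann Green's function on $\mathbb R^2\setminus\overline{B_j}$, with $\partial_n N_j\equiv 0$ on $\Gamma_j$ and no $z$-singularity in $\Omega$ other than $z=w$.

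With $R_j(z,w):=N(z,w)-N_j(z,w)$, I would verify for $w$ with $\mathrm{dist}(w,\Gamma_j)\le \mathbf d/4$ that: (i) inside $\Omega$ the Dirac singularities at $z=w$ cancel exactly, so $\Delta_z R_j$ is an explicit constant; (ii) on $\Gamma_j$, $\partial_n R_j=l^{-1}-\partial_n N_j$ is smooth (indeed constant); (iii) on $\Gamma_i$ for $i\neq j$, both $w$ and $w^*_j$ are separated from $\Gamma_i$ by at least $3\mathbf d/4$ (using $\mathrm{dist}(\Gamma_i,\Gamma_j)\ge\mathbf d$ for the former, and $w^*_j\in B_j$ for the latter), so $N_j(\cdot,w)$ is $C^\infty$ up to $\Gamma_i$ with all derivatives bounded by constants depending only on $s,\mathbf d,\Omega$, and hence $\partial_n R_j|_{\Gamma_i}$ enjoys the same bounds. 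Fixing the free constant in $R_j$ by, say, $\int_\Omega R_j(z,w)\,dz=0$, and noting that the Gauss compatibility condition is inherited from $N$ and $N_j$ individually, the Agmon--Douglis--Nirenberg/Schauder estimates for the Neumann problem deliver $\|\nabla^s R_j(\cdot,w)\|_{L^\infty}\le C(s,\mathbf d,\Omega)$, which is \eqref{CA404}.

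The main obstacle is precisely the uniformity in $w$ as $w$ approaches $\Gamma_j$: $N(\cdot,w)$ itself has gradient norms that blow up near the boundary, so the cancellation between $N$ and $N_j$ must be essentially exact in a one-sided neighbourhood of $\Gamma_j$. This is what the image construction guarantees: the leading singular structure of $N$ near $\Gamma_j$, consisting of the primary logarithm at $z=w$ and its reflection enforcing the Neumann condition on $\Gamma_j$, is already absorbed into $N_j$ by design, and the remainder $R_j$ depends on $w$ only through the position of $w^*_j$, whose distance from $\partial\Omega\setminus\Gamma_j$ is bounded below by $\mathbf d$ uniformly in $w$.
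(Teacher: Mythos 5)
Your proposal takes essentially the same approach as the paper: construct $N_j$ by the method of images (reflection across $\Gamma_j$), set $R_j=N-N_j$, observe that $R_j$ solves a Neumann problem on $\Omega$ whose boundary data on $\Gamma_j$ is constant and on $\Gamma_i$ ($i\neq j$) is smooth with derivative bounds uniform in $w$ because both $w$ and the reflected point $w_j^*$ stay a definite distance from $\Gamma_i$, and then invoke Schauder estimates. One small imprecision: you describe the correction $Q_j(z)$ as a "smooth quadratic" term, but for $j\ge 1$ the term that actually enforces $\partial_n N_j=\mathrm{const}$ on $\Gamma_j$ is logarithmic, namely $-\tfrac{1}{2\pi}\log|z-b_j|$ (as in the paper's explicit formula $H_j(z,w)=\tfrac{1}{2\pi}\mathrm{Re}\log\bigl(1-\tfrac{r_j}{z-b_j}\tfrac{r_j}{\bar w-\bar b_j}\bigr)$); since $|z-b_j|\ge r_j$ on $\overline\Omega$ this is still a regular function of $z$, so the argument is unaffected. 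Similarly, your choice of the normalization $\Delta N_0=\delta_w-|B_0|^{-1}$ differs from the paper's convention $\Delta N_j=\delta_w$ with $\partial_n N_j=\mathrm{const}$, but the discrepancy is a smooth bounded function of $z$ that folds into $R_j$ harmlessly.
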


\begin{proof}
We first consider the case $\mathrm{dist}(w,\Gamma_0)\leq\mathbf{d}/4$, then the definition \eqref{CA401} guarantees that we must have $\mathrm{dist}(w,\Gamma_j)\geq\mathbf{d}/2$ for $j\neq 0$. 

Recalling that the Green's function $N_0(z,w)$ on the unit disc $B_0$ (see \cite[Chapter 2]{GT}) is explicitly given by
\begin{equation}\label{CA403}
N_0(z,w)=\Gamma(z,w)+H_0(z,w),~~H_0(z,w)=\frac{1}{2\pi}\mathrm{Re}\,\log(1-z\bar w) ~~ \ z,w\in B_0,
\end{equation}
where $H_0(z,w)$ solves the Neumann problem on $\overline{B_0}$:
\begin{equation}\label{g7}
\begin{cases}
\Delta H_0(\cdot,w)=0~~\mathrm{in}\ B_0,\\
\partial_nH_0(\cdot,w)=-\partial_n\Gamma(\cdot,w)+(2\pi)^{-1}~~\mathrm{on}\ \Gamma_0.
\end{cases}
\end{equation}
If we denote 
$R_0(z,w)\triangleq H(z,w)-H_0(z,w)$, then \eqref{CC408} together with \eqref{g7} guarantees that the remaining term $R_0(z,w)$ solves the Neumann problem on $\overline{\Omega}$: 
\begin{equation}\label{g8}
\begin{cases}
\Delta R_0(\cdot,w)=0~~\mathrm{in}\ \Omega,\\
\partial_nR_0(\cdot,w)=l^{-1}-(2\pi)^{-1}\ \mathrm{on}\ \Gamma_0, \\ 
\partial_nR_0(\cdot,w)=-\partial_n\Gamma(\cdot,w)-\partial_nH_0(\cdot,w)+l^{-1}~~\mathrm{on}\ \partial\Omega\setminus\Gamma_0.
\end{cases}
\end{equation}
Applying \eqref{CA402} and \eqref{CA403}, we calculate that for $z\in\Gamma_j=\{|z-b_j|=r_j\}$ with $j\in\{1,\cdots,k-1\}$,
$$\partial_nR_0(z,w)=\frac{1}{4\pi r_j}\left(\frac{z-b_j}{z-w}+\frac{\bar z-\bar{b}_j}{\bar z-\bar w}\right)-\frac{1}{2\pi r_j}\mathrm{Re}\left(\frac{(z-b_j)\bar w}{1-z\bar w}\right)+l^{-1},$$
which is smooth and bounded uniformly due to   
$$|z-w|\geq \mathbf{d}/2,~~ |z\cdot\bar w|\leq(1-\mathbf{d}),~~ \mbox{since $\mathrm{dist}(w,\Gamma_0)\leq\mathbf{d}/4$ and $z\in\partial\Omega\setminus\Gamma_0$}.$$
Consequently, the Schauder estimates (\cite[Chapter 3]{ADN}) to  \eqref{g8} ensure that for any $s\in\mathbb{N}^+$, there is a constant $C$ depending on $s$, $\mathbf{d}$, and $\Omega$, such that
\begin{equation}\label{j1}
\|\nabla^s R_0(\cdot,w)\|_{L^{\infty}(\Omega)}\leq C.
\end{equation}
Combining \eqref{CC407}, \eqref{CA403}, and \eqref{j1}, it holds that
$$N(z,w)=N_0(z,w)+R_0(z,w),$$
with $R_0(z,w)$ satisfying \eqref{CA404}, provided $\mathrm{dist}(w,\Gamma_0)\leq\mathbf{d}/4$.

Next, the arguments for cases $\mathrm{dist}(w,\Gamma_j)\leq\mathbf{d}/4$ with $j\in\{1,\cdots,k-1\}$ are similar. Let us consider the Green's function on the exterior domain $\mathbb{R}^2\setminus\overline{B_j}$,
\begin{equation}\label{CA407}
N_j(z,w)=\Gamma(z,w)+H_j(z,w)~~z,w\in \mathbb{R}^2\setminus\overline{B_j},
\end{equation}
where $H_j(z,w)$ solves the Neumann problem on $\mathbb{R}^2\setminus {B_j}$:
\begin{equation}\label{g9}
\begin{cases}
\Delta H_j(\cdot,w)=0~~\mathrm{in}\ \mathbb{R}^2\setminus\overline{B_j},\\
\partial_nH_j(\cdot,w)=-\partial_n\Gamma(\cdot,w) 
~~\mathrm{on}\ \Gamma_j.
\end{cases}
\end{equation}
We mention that the solution of \eqref{g9} is not  unique due to the unboundedness of the exterior domain. However, the Schwarz reflection principle \cite[Chapter 2]{GT} provides an explicit solution of \eqref{g9} written as
\begin{equation}\label{hk}
H_j(z,w)=\frac{1}{2\pi}\mathrm{Re}\,\log\left(1-\frac{r_j}{z-b_j}\cdot\frac{r_j}{\bar w-\bar{b}_j}\right).
\end{equation}
Thus, we denote $R_j(z,w)=H(z,w)-H_j(z,w)$, then \eqref{CC408} together with \eqref{g9} ensures that $R_j(z,w)$ solves the Neumann problem on $\overline{\Omega}$: 
\begin{equation}\label{g10}
\begin{cases}
\Delta R_j(\cdot,w)=0~~\mathrm{in}\ \Omega,\\
\partial_nR_j(\cdot,w)=l^{-1}~~\mathrm{on}\ \Gamma_j,\\ 
\partial_nR_j(\cdot,w)=-\partial_n\Gamma(\cdot,w)-\partial_nH_j(\cdot,w)+l^{-1}~~\mathrm{on}\ \partial\Omega\setminus\Gamma_j.
\end{cases}
\end{equation}
Direct computations yields that when $z\in\Gamma_0$,
\begin{equation}\label{CA405}
\begin{split}\partial_nR_j&=-\frac{1}{4\pi}\left(\frac{z}{z-w}+\frac{\bar z}{\bar z-\bar w}\right)\\
&\quad+\frac{1}{2\pi}\mathrm{Re}\left(\frac{r_j^2\,z\,(z-b_j)^{-1}}{ (z-b_j)(\bar{w}-\bar{b}_j)-r_j^2}\right)+l^{-1},
\end{split}
\end{equation}
while for $z\in\Gamma_p\ (p\neq 0,j)$, we have
\begin{equation}\label{CA406}
\begin{split}\partial_nR_j&=\frac{1}{4\pi r_p}\left(\frac{z-b_p}{z-w}+\frac{\bar z-\bar{b}_p}{\bar z-\bar w}\right)\\
&\quad-\frac{1}{2\pi r_p}\mathrm{Re}\left(\frac{r_j^2\,(z-b_p)\,(z-b_j)^{-1}}{(z-b_j)\,(\bar{w}-\bar{b}_j )-r_j^2}\right)+l^{-1}.
\end{split}
\end{equation}
In particular, when $\mathrm{dist}(w,\Omega_j)\leq\mathbf{d}/4$ and $z\in \partial\Omega\setminus\Gamma_j$, it holds that
$$|z-w|\geq\mathbf{d}/2,~~ |z-b_j|\geq r_j+\mathbf{d},~~ |z-b_j|\cdot|\bar{w}-\bar{b}_j|\geq(r_j+\mathbf{d})\cdot r_j> r_j^2.$$
Thus \eqref{CA405} and \eqref{CA406} ensures that $\partial_nR_j$ is smooth and bounded uniformly on $\partial\Omega\setminus\Gamma_j$. Thereby, applying the Schauder estimates (\cite[Chapter 3]{ADN}) to \eqref{g10} yields that for any $s\in\mathbb{N}^+$, there is a constant $C$ depending on $s$, $\mathbf{d}$, and $\Omega$, such that
\begin{equation*}\ 
\|\nabla^s R_j(\cdot,w)\|_{L^\infty(\Omega)}\leq C.
\end{equation*}
Consequently, when $\mathrm{dist}(w,\Omega_j)\leq\mathbf{d}/4$ with $j\in\{1,\cdots,k-1\}$, we  apply \eqref{CC407} together with \eqref{CA407} to declare that
$$N(z,w)=N_j(z,w)+H_j(z,w),$$
with $R_j(z,w)$ satisfying \eqref{CA404}.
The proof of the lemma is therefore completed.
\end{proof}
\begin{remark}\label{RR42}
Lemma \ref{LL41} also provides the decomposition of the harmonic function $H(z,w)$ given by \eqref{CC408}. If for some $j\in\{0,1,\cdots,k-1\}$, we have $\mathrm{dist}(w,\Gamma_j)\leq \mathbf{d}/4$, then $H(z,w)$ can be decomposed as
$$H(z,w)=H_j(z,w)+R_j(z,w),$$ 
where $H_j(z,w)$ is the principal part explicitly provided by \eqref{CA403} and \eqref{hk}, and $R_j(z,w)$ is the regular remaining term satisfying \eqref{CA404}.
\end{remark}

In the same spirit as Lemma \ref{LL41}, the structure of the Green's function is simpler when $w$ is away form $\partial\Omega$.
\begin{lemma}\label{LL42}
Let $\Omega$ be the circular domain given by \eqref{CA414}--\eqref{CA402} and $N(z,w)$ be the Green's function given by \eqref{CC407}, if $\mathrm{dist}(w,\partial\Omega)>\mathbf{d}/4$, then $N(z,w)$ is written as
$$N(z,w)=\Gamma(z,w)+H(z,w),$$
where $H(z,w)$ is a regular function, and for any $s\in\mathbb{N}^+$, there is a constant $C$ depending on $s$, $\mathbf{d}$, and $\Omega$, such that
\begin{equation}\label{CA408}
\|\nabla^sH(\cdot,w)\|_{L^\infty}\leq C.
\end{equation}
\end{lemma}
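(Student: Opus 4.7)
The decomposition $N(z,w)=\Gamma(z,w)+H(z,w)$ is already given by definition in \eqref{CC407}, so the only task is to verify the uniform bound \eqref{CA408} on the harmonic part $H(\cdot,w)$ under the hypothesis $\mathrm{dist}(w,\partial\Omega)>\mathbf{d}/4$. The plan is to rewrite the Neumann problem \eqref{CC408} satisfied by $H(\cdot,w)$ and observe that its boundary data is smooth and uniformly controlled in all derivatives, then apply Schauder theory.

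First, I would write down explicitly the boundary data of $H(\cdot,w)$ from \eqref{CC408}. For any $z\in\partial\Omega$, one has
\begin{equation*}
\partial_n H(z,w)=-\partial_n\Gamma(z,w)+l^{-1}=-\frac{1}{2\pi}\cdot\frac{n(z)\cdot(z-w)}{|z-w|^2}+l^{-1}.
\end{equation*}
Since every $z\in\partial\Omega$ satisfies $|z-w|\geq\mathrm{dist}(w,\partial\Omega)>\mathbf{d}/4$, the denominator stays bounded below by a positive constant depending only on $\mathbf{d}$. Differentiating tangentially along $\partial\Omega$ introduces only factors of $(z-w)$ and $|z-w|^{-k}$ together with derivatives of the (smooth) outer unit normal $n(z)$. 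Consequently, for every $s\in\mathbb{N}^+$, the tangential derivatives of $\partial_n H(\cdot,w)$ of order up to $s$ along $\partial\Omega$ are bounded uniformly by a constant depending only on $s$, $\mathbf{d}$, and $\Omega$ (in particular, independently of the point $w$).

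Next I would invoke the Schauder estimates for the Neumann problem in \cite[Chapter 3]{ADN} applied to
\begin{equation*}
\begin{cases}
\Delta H(\cdot,w)=0 & \text{in }\Omega,\\
\partial_n H(\cdot,w)=g(\cdot,w) & \text{on }\partial\Omega,
\end{cases}
\end{equation*}
where $g(z,w)\triangleq-\partial_n\Gamma(z,w)+l^{-1}$. Since $g(\cdot,w)$ belongs to $C^{s-1,\alpha}(\partial\Omega)$ with norm bounded uniformly in $w$, and since the compatibility condition $\int_{\partial\Omega}g\,dS=0$ is automatic from the definition of $l$, the Schauder estimates up to the boundary yield
\begin{equation*}
\|H(\cdot,w)\|_{C^{s,\alpha}(\overline{\Omega})}\leq C\bigl(\|g(\cdot,w)\|_{C^{s-1,\alpha}(\partial\Omega)}+\|H(\cdot,w)\|_{L^\infty(\Omega)}\bigr),
\end{equation*}
after fixing the additive constant (for instance by requiring $\int_\Omega H(\cdot,w)\,dx=0$). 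The $L^\infty$ norm of $H(\cdot,w)$ itself is bounded by standard elliptic estimates together with the uniform bound on $g$, so this delivers \eqref{CA408}.

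The only subtle point is pinning down an $L^\infty$ bound on $H(\cdot,w)$ itself uniformly in $w$, since the Schauder inequality only controls higher derivatives modulo a lower-order norm. This is straightforward: after normalization $\int_\Omega H(\cdot,w)\,dx=0$, one combines Poincaré's inequality with an $H^1$ energy estimate driven by the uniformly bounded Neumann data $g(\cdot,w)$, then bootstraps via the Schauder chain. Thus the whole proof is essentially a boundary-regularity argument and mirrors in a simpler form what was done in Lemma \ref{LL41} for the remainders $R_j$; indeed the present case is easier because no singular principal part needs to be subtracted off when $w$ is bounded away from $\partial\Omega$.
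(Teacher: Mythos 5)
Your proposal is correct and follows essentially the same route as the paper: identify the Neumann data $\partial_n H(\cdot,w)=-\partial_n\Gamma(\cdot,w)+l^{-1}$, use $|z-w|>\mathbf{d}/4$ for $z\in\partial\Omega$ to get uniform bounds on that data and all its tangential derivatives, and then invoke Schauder estimates for the Neumann problem. The only addition you make is the explicit remark about fixing the additive constant of $H$ to close the Schauder chain, a point the paper leaves implicit (and which is harmless here since \eqref{CA408} only concerns $\nabla^sH$ with $s\geq 1$).
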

\begin{proof}
According to \eqref{CC408}, we check that  
\begin{equation*}
\begin{split}
&\partial_nH(z,w)=-\frac{1}{4\pi}\left(\frac{z}{z-w}+\frac{\bar z}{\bar z-\bar w}\right)~~z\in\Gamma_0,\\
&\partial_nH(z,w)=\frac{1}{4\pi r_j}\left(\frac{z-b_j}{z-w}+\frac{\bar z-\bar{b}_j}{\bar z-\bar w}\right)~~z\in\partial\Omega\setminus\Gamma_0.\\
\end{split}
\end{equation*}
Note that $|z-w|>\mathbf{d}/4$, since $z\in\partial\Omega$ and $\mathrm{dist}(w,\partial\Omega)>\mathbf{d}/4$. Therefore $\partial_n H(z,w)$ is smooth and bounded uniformly on $\partial\Omega$. Thus we apply the Schauder estimates of \eqref{CC408} to deduce \eqref{CA408}. The proof is therefore completed.
\end{proof}

In view of Lemma \ref{LL42}, we need only focusing on the case that $w$ approaches $\partial\Omega$, while Lemma \ref{LL41} completely describe the singularity of $N(z,w)$ and $H(z,w)$ near the boundary. Combining these two lemmas leads to the estimates below.
\begin{lemma}\label{T41}
Let $\Omega$ be the circular domain given by \eqref{CA414}--\eqref{CA402} and $H(z,w)$ be determined by \eqref{CC408}.  
If we have $\mathrm{dist}(w,\Gamma_0)\leq\mathbf{d}/4$, then there is a constant $C$ depending on $\Omega$ such that,
\begin{equation}\label{C408}
\begin{split}
\big|\nabla H(z,w)\big|\leq C|z-w_0|^{-1},~~
\big|\nabla^2 H(z,w)\big|\leq C|z-w_0|^{-2},
\end{split}
\end{equation}
where $w_0$ can be chosen as $w/|w|^2$ or $w/|w|$, regarded as the projection of $w$ with respect to $\Gamma_0$. 
Similarly, if for some $j\in\{1,\cdots,k-1\}$, we have $\mathrm{dist}(w,\Gamma_j)\leq\mathbf{d}/4$, then it holds that
\begin{equation}\label{CA409}
\begin{split}
\big|\nabla H(z,w)\big|\leq C|z -w_j|^{-1},~~
\big|\nabla^2 H(z,w)\big|\leq C|z-w_j|^{-2},
\end{split}
\end{equation}
where $w_j$ can be chosen as $b_j+r_j^2(w-b_j)/|w-b_j|^2$ or $b_j+r_j(w-b_j)/|w-b_j|$, regarded as the projection of $w$ with respect to $\Gamma_j$.

In particular, combining \eqref{CA408}--\eqref{CA409} leads to the following estimates on the Green's function $N(z,w)=(2\pi)^{-1}\log|z-w|+H(z,w)$,
\begin{equation}\label{CA449}
|\nabla N(z,w)|\leq C|z-w|^{-1},~~|\nabla^2 N(z,w)|\leq C|z-w|^{-2}.
\end{equation}
\end{lemma}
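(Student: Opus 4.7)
My plan is to reduce every estimate to the explicit principal parts furnished by Remark \ref{RR42}. When $\mathrm{dist}(w,\Gamma_j)\leq \mathbf{d}/4$ we have $H(z,w)=H_j(z,w)+R_j(z,w)$, and the uniform bound $\|\nabla^s R_j(\cdot,w)\|_{L^\infty}\leq C$ confines all singularities in $z$ to the principal part $H_j$. So it suffices to differentiate the closed-form expressions \eqref{CA403} and \eqref{hk} directly.

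For $j=0$ I would use the factorization $1-z\bar w=-\bar w(z-w/|w|^2)$ to rewrite
\[
H_0(z,w)=\frac{1}{2\pi}\log|z-w_0|+\frac{1}{2\pi}\log|w|,\qquad w_0=w/|w|^2.
\]
The second summand is independent of $z$, while differentiating the first gives $|\nabla_z\log|z-w_0||=|z-w_0|^{-1}$ and $|\nabla^2_z\log|z-w_0||\leq C|z-w_0|^{-2}$. For $j\in\{1,\ldots,k-1\}$ the analogous identity is
\[
(z-b_j)(\bar w-\bar b_j)-r_j^2=(\bar w-\bar b_j)(z-\tilde w),\qquad \tilde w\triangleq b_j+\frac{r_j^2(w-b_j)}{|w-b_j|^2},
\]
which is a one-line consequence of $r_j^2=(\tilde w-b_j)(\bar w-\bar b_j)$, the defining relation of the Kelvin reflection across $\Gamma_j$. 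Substituting into \eqref{hk} collapses $H_j$ to $\tfrac{1}{2\pi}\log|z-\tilde w|-\tfrac{1}{2\pi}\log|z-b_j|$; since $z\in\Omega$ forces $|z-b_j|\geq r_j$, the second term has uniformly bounded $z$-derivatives and the first supplies the claimed $|z-\tilde w|^{-1}$ and $|z-\tilde w|^{-2}$ control.

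The alternative projections $w_j=b_j+r_j(w-b_j)/|w-b_j|$ (radial projection onto $\Gamma_j$) and $w_0=w/|w|$ are handled by a short triangle argument: a direct computation gives $|w_j-\tilde w|=\mathrm{dist}(\tilde w,\Gamma_j)$, and since $z\in\Omega$ sits on the opposite side of $\Gamma_j$ from $\tilde w$ (outside $\Gamma_0$ when $j=0$, inside $B_j$ when $j\geq 1$), every segment from $z$ to $\tilde w$ must cross $\Gamma_j$, forcing $|z-\tilde w|\geq \mathrm{dist}(\tilde w,\Gamma_j)=|w_j-\tilde w|$. The triangle inequality then delivers $|z-w_j|\leq 2|z-\tilde w|$, transferring the bounds from $\tilde w$ to $w_j$.

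Finally, \eqref{CA449} follows by combining \eqref{C408}--\eqref{CA409} with Lemma \ref{LL42} (which handles $\mathrm{dist}(w,\partial\Omega)>\mathbf{d}/4$) and the classical Kelvin identity
\[
|1-z\bar w|^2=|z-w|^2+(1-|z|^2)(1-|w|^2)
\]
together with its scaled and translated analogues for each $\Gamma_j$; these give $|z-\tilde w|\geq c|z-w|$ whenever $z$ and $w$ lie on the same side of $\Gamma_j$, upgrading $C|z-\tilde w|^{-s}$ to $C|z-w|^{-s}$ for $s=1,2$. Combined with $|\nabla\Gamma(z,w)|=(2\pi|z-w|)^{-1}$ this closes \eqref{CA449}. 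The only step requiring any genuine geometric care is this reflection comparison; otherwise the proof is a direct unwinding of the explicit formulas for $H_j$.
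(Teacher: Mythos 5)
Your proposal is correct and follows essentially the same route as the paper: decompose $H = H_j + R_j$ via Lemma \ref{LL41}/Remark \ref{RR42}, absorb the regular remainder, and differentiate the explicit principal parts. The only real difference is cosmetic but pleasant — you factor $1-z\bar w = -\bar w(z - w/|w|^2)$ and $(z-b_j)(\bar w - \bar b_j) - r_j^2 = (\bar w - \bar b_j)(z - \tilde w)$ to rewrite $H_j$ as $\tfrac{1}{2\pi}\log|z-\tilde w|$ plus a harmless term, so the $|z-\tilde w|^{-s}$ bounds are immediate; the paper instead differentiates $\mathrm{Re}\log(1-z\bar w)$ directly via $\partial_z+\partial_{\bar z}$ and then invokes the geometric inequalities \eqref{CA426}, which are exactly what your triangle argument and Kelvin identity $|1-z\bar w|^2 = |z-w|^2 + (1-|z|^2)(1-|w|^2)$ establish. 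Two minor points worth spelling out if you write this up: (i) $\nabla$ in the lemma must cover $w$-derivatives as well (the paper computes $\partial_{y_1}H$ and appeals to the symmetry $H_j(x,y)=H_j(y,x)$), so you should note that $\tilde w = b_j + r_j^2/(\bar w - \bar b_j)$ and $w_0 = w/|w|^2$ have uniformly bounded $w$-derivatives on $\{\,\mathrm{dist}(w,\Gamma_j)\leq \mathbf d/4\,\}$, whence the same $|z-\tilde w|^{-s}$ bounds hold for mixed and pure $w$-derivatives; and (ii) you implicitly need $|z-b_j|\geq r_j$ to hold on $\overline\Omega$, which it does since $\overline\Omega\cap B_j=\varnothing$.
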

\begin{proof}
We prove \eqref{C408} for $\nabla=\partial_{y_1}$ and $\nabla^2=\partial_{y_1}^2$. In view of \eqref{cc}, it holds that
$$
\partial_{y_1}H(z,w)= (\partial_z+\partial_{\bar{z}})H(z,w)~~\mathrm{and}~~
\partial_{y_1}^2H(z,w)=(\partial_z+\partial_{\bar{z}})^2H(z,w).
$$
The estimates for other derivatives and $\eqref{CA409}$ can be established by similar methods, once we apply the fact
\begin{equation}\label{CA410}
H_j(x,y)=H_j(y,x),~~ R_j(x,y)=R_j(y,x),~~j\in\{0,1,\cdots,k-1\}.
\end{equation}

With the help of Lemma \ref{LL41} and Remark \ref{RR42}, when $\mathrm{dist}(w,\Gamma_0)\leq\mathbf{d}/4$, we infer that
$$H(z,w)=H_0(z,w)+R_0(z,w).$$
Therefore, we apply \eqref{CC407} and \eqref{CA403} to calculate directly that 
\begin{equation*}
\begin{split}
\partial_{y_1}H(z,w)
&= (\partial_z+\partial_{\bar{z}})H_0(z,w)+\partial_{y_1}R_0\\
&=\frac{1}{2\pi}\mathrm{Re}\left( \partial_z\log(1-z\bar{w})\right)+\partial_{y_1}R_0\\
&=\frac{1}{2\pi}\mathrm{Re}\left(\frac{-1}{z-\bar{w}^{-1}}\right)+\partial_{y_1}R_0,\\
\end{split}
\end{equation*}
similarly, we also have
\begin{equation*}
\begin{split}
\partial_{y_1}^2H(z,w)
&=(\partial_z+\partial_{\bar{z}})^2H_0(z,w)+\partial_{y_1}^2R_0\\
&=\frac{1}{2\pi}\mathrm{Re}\left( \partial_z^2\log(1-z\bar{w})\right)+\partial_{y_1}^2R_0\\
&=\frac{1}{2\pi}\mathrm{Re}\left( \frac{1}{(z-\bar{w}^{-1})^2}\right)+\partial_{y_1}^2R_0.
\end{split}
\end{equation*}
Then \eqref{C408} follows form \eqref{CA404}, \eqref{CA410}, and the fact that
\begin{equation}\label{CA426}
\left|z-\frac{w}{|w|^2}\right|\geq\left|z-\frac{w}{|w|}\right|,~~\left|z-\frac{w}{|w|^2}\right|\geq C\left|z-w\right|,~~~~\mathrm{for}\ z,w\in B_0.
\end{equation}
The proof for other derivatives in \eqref{C408}--\eqref{CA449} follows in the similar way. We therefore finish the lemma.
\end{proof}

\subsection{Cancellation properties of Green's functions}\label{SS42}
\quad Let us apply the structural Lemma \ref{LL41} of Green's functions to study the crucial term  given by \eqref{CC504},
$$\partial_{x_i}N(x,y)+\partial_{y_i}N(x,y).$$
In view of \eqref{CA449}, the singularity of such term should be order one, since
$$\left|\partial_{x_i}N(x,y)+\partial_{y_i}N(x,y)\right|\leq C|\nabla N(x,y)|\leq C|x-y|^{-1}.$$
However, we show that there is a certain cancellation structure which cuts down the singularity of the above estimates, which in some sense turns out to be
\begin{equation}\label{CA411}
\left|\partial_{x_i}N(x,y)+\partial_{y_i}N(x,y)\right|\leq C.
\end{equation}
Let us first establish \eqref{CA411} on the circular domain, and then extend it to the general $k$-connected domains with the help of conformal mappings.

We still assume that $\Omega$ is the circular domain given by \eqref{CA414}--\eqref{CA402} and $\mathbf{d}$ is defined in \eqref{CA401}.
Let us illustrate the source of our arguments by considering the outermost component $\Gamma_0$, which is a unit circle. Suppose that $w=x_1+ix_2$ is close to $\Gamma_0$ with $\mathrm{dist}(w,\Gamma_0)\leq\mathbf{d}/4$, then the projection $w/|w|$ lies in $\Gamma_0$, moreover $w=x_1+ix_2$ itself gives the outer normal vector of $\Gamma_0$ at $w/|w|$, thus $\tau=i\cdot w=-x_2+ix_1$ provides a tangent vector of $\Gamma_0$ at $w/|w|$. Then the crucial cancellation structure is given by the following term,
\begin{equation}\label{CA437}
\begin{split}
&\sum_{s=1}^2 \tau_s\,\partial_{x_s}N(x,y)+
\tau_s\,\partial_{y_s}N(x,y).\\
\end{split}
\end{equation}
By virtue of \eqref{CC503} and \eqref{CC407}, we obtain 
\begin{equation}\label{CA422}
N(x,y)=\Gamma(x,y)+H(x,y),~~\partial_{x_s}\Gamma(x,y)+\partial_{y_s}\Gamma(x,y)=0~~\mathrm{for}\ s=1,2.
\end{equation}
with $H(x,y)$ given by \eqref{CC408}. Thus \eqref{CA422} further reduces \eqref{CA437} to
\begin{equation}\label{CA420}
\begin{split}
&\sum_{s=1}^2 \tau_s\,\partial_{x_s}N(x,y)+
\tau_s\,\partial_{y_s}N(x,y)=\sum_{s=1}^2 \tau_s\,\partial_{x_s}H(x,y)+
\tau_s\,\partial_{y_s}H(x,y).\\
\end{split}
\end{equation}
If we apply  complex variables and $\tau=-x_2+ix_1=i\cdot w$, then \eqref{CA420} can be transformed into a very neat form as
\begin{equation}\label{CA423}
\begin{split}
\sum_{s=1}^2 \tau_s\,\partial_{x_s}H(x,y)+
\tau_s\,\partial_{y_s}H(x,y)
&= 2\,\mathrm{Re}\big(\tau\,\partial_z H(z,w)+\bar{\tau}\,\partial_{\bar{w}}H(z,w)\big).
\end{split}
\end{equation}
Now, let us illustrate the cancellation properties due to $\left(\tau\,\partial_z+\bar{\tau}\,\partial_{\bar{w}}\right)H(z,w)$. In view of Lemma \ref{LL42}, we still focus on the case that $w$ approaches $\partial\Omega$.
\begin{lemma}\label{L43}
Let $\Omega$ be the circular domain given by \eqref{CA414}--\eqref{CA402}. If $\mathrm{dist}(w,\Gamma_0)\leq\mathbf{d}/4$, then there is a constant $C$ depending only on $\Omega$ such that,
\begin{equation}\label{j3}
\big|\left(\tau\,\partial_z+\bar{\tau}\,\partial_{\bar{w}}\right)H(z,w)\big|\leq C, ~~\mathrm{with}\ \tau=i\cdot w.
\end{equation}
We mention that $\tau$ lies in the tangential direction of $\Gamma_0$ at the projection $w/|w|$. Similarly, if for some $j\in\{1,\cdots,k-1\}$, we have $\mathrm{dist}(w,\Gamma_j)\leq\mathbf{d}/4$, where $\Gamma_j$ is the boundary of the disc $B_j=\{|z-b_j|<r_j\}$, then it holds that
\begin{equation}\label{CA415}
\big| \left(\tau\,\partial_z+\bar{\tau}\,\partial_{\bar{w}}\right)H(z,w)\big|\leq C,~~\mathrm{with}\ \tau=i\cdot(w-b_j).
\end{equation}
Note that $\tau$ lies in the tangential direction of $\Gamma_j$ at $r_j(w-b_j)/|w-b_j|$.
\end{lemma}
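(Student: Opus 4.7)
The plan is to exploit the decomposition from Lemma \ref{LL41}/Remark \ref{RR42} to reduce the problem to its principal part, compute the combination $(\tau\partial_z+\bar\tau\partial_{\bar w})H_j$ by hand using the explicit formulas \eqref{CA403} and \eqref{hk}, and then see the tangential choice of $\tau$ generate the precise algebraic cancellation that removes the boundary singularity.

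\emph{Step 1 (reduction to the principal part).} Since $\mathrm{dist}(w,\Gamma_j)\le\mathbf{d}/4$, Remark \ref{RR42} gives $H(z,w)=H_j(z,w)+R_j(z,w)$ with $\|\nabla R_j(\cdot,w)\|_{L^\infty}\le C$ by \eqref{CA404}. Because $|\tau|=|w|$ or $|\tau|=|w-b_j|$ is bounded by the diameter of $\Omega$, the contribution of $R_j$ to $(\tau\partial_z+\bar\tau\partial_{\bar w})H$ is already $O(1)$, so it suffices to establish the bound for $H_j$ itself.

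\emph{Step 2 (explicit computation for $j=0$).} Using $H_0(z,w)=\tfrac{1}{2\pi}\mathrm{Re}\log(1-z\bar w)$ and $\tau=iw$, a direct calculation of $\partial_z H_0$ and $\partial_{\bar w}H_0$ gives
\begin{equation*}
\tau\,\partial_z H_0+\bar\tau\,\partial_{\bar w}H_0
=\frac{-i|w|^2+iz\bar w}{4\pi(1-z\bar w)}
=\frac{i\bar w(z-w)}{4\pi(1-z\bar w)}.
\end{equation*}
The crucial algebraic step is to rewrite $1-z\bar w=-\bar w\bigl(z-w/|w|^2\bigr)$, i.e., to recognize the Schwarz reflection $w_0=w/|w|^2$ of $w$ through $\Gamma_0$; then
\begin{equation*}
\tau\,\partial_z H_0+\bar\tau\,\partial_{\bar w}H_0=\frac{-i(z-w)}{4\pi\bigl(z-w/|w|^2\bigr)}.
\end{equation*}
The tangential direction has produced an extra factor $(z-w)$ in the numerator, which exactly kills the would-be singularity as $z$ approaches the projection of $w$. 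To close, I decompose $z-w=(z-w/|w|^2)+(w/|w|^2-w)$ and note $|w/|w|^2-w|=(1-|w|^2)/|w|$ while $|z-w/|w|^2|\ge 1/|w|-|z|\ge(1-|w|)/|w|$ for $z\in\overline\Omega\subset\overline{B_0}$ (this is a sharper use of \eqref{CA426}). The ratio is therefore uniformly bounded, proving \eqref{j3}.

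\emph{Step 3 (general $j\ge 1$ and the main obstacle).} Setting $Z=z-b_j$, $W=w-b_j$, $\tau=iW$, and differentiating $H_j(z,w)=\tfrac{1}{2\pi}\mathrm{Re}\log\bigl((Z\bar W-r_j^2)/(Z\bar W)\bigr)$, I expect the parallel identity
\begin{equation*}
\tau\,\partial_z H_j+\bar\tau\,\partial_{\bar w}H_j
=\frac{i\,r_j^2\,(w-z)}{4\pi\,(z-b_j)\,\overline{(w-b_j)}\,(z-w_j)},
\end{equation*}
where $w_j=b_j+r_j^2 W/|W|^2$ is the Schwarz reflection of $w$ through $\Gamma_j$; the algebra rests on the common factor $(W-Z)$ that emerges after collecting terms over the common denominator $Z(Z\bar W-r_j^2)$. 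Since $z,w\in\overline\Omega$ lie outside $B_j$, we have $|z-b_j|\ge r_j$ and $|w-b_j|\ge r_j$, so the product $|(z-b_j)\overline{(w-b_j)}|\ge r_j^2$ absorbs the $r_j^2$ in the numerator, and the remaining task is to bound $|w-z|/|z-w_j|$. Writing $w-z=(w-w_j)-(z-w_j)$ and using $|w-w_j|=(|W|^2-r_j^2)/|W|$ together with $|z-w_j|\ge r_j(|W|-r_j)/|W|$, the factor $|W|-r_j$ cancels and the ratio is controlled by $(|W|+r_j)/r_j\le C$.

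The genuinely hard part is Step 2's (and its $j\ge 1$ analogue's) algebraic identity: one must see that the tangential combination $\tau\partial_z+\bar\tau\partial_{\bar w}$ is exactly the right differential operator to force a factor of $(z-w)$ into the numerator, and that this factor then conspires with the Schwarz-reflection geometry of the explicit principal parts to match the order-one singularity of the denominator at $z\to w_j$. Once this cancellation is spotted, the remaining estimates are short and only require Lemma \ref{T41} and elementary triangle inequalities.
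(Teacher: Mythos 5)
Your proof is correct and follows essentially the same route as the paper: decompose $H=H_j+R_j$ via Lemma \ref{LL41}/Remark \ref{RR42}, compute $(\tau\partial_z+\bar\tau\partial_{\bar w})H_j$ explicitly from the closed forms \eqref{CA403} and \eqref{hk}, observe the factor $(z-w)$ that the tangential operator forces into the numerator, and bound the resulting ratio $|z-w|/|z-w_j|$ by triangle inequalities involving the Schwarz reflection $w_j$. Your claimed identity in Step 3 matches \eqref{CA418} after the rewriting $(z-b_j)-r_j^2(\bar w-\bar b_j)^{-1}=z-w_j$, and your bounding of the ratio is a slightly more explicit version of the paper's one-line remark.
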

\begin{proof}
We first establish \eqref{j3}. By virtue of Lemma \ref{LL41}, when $\mathrm{dist}(w,\Gamma_0)\leq\mathbf{d}/4$, we admit the decomposition due to \eqref{CA403},
$$H(z,w)=H_0(z,w)+R_0(z,w).$$
For $H_0(z,w)=(4\pi)^{-1}\big(\log(1-z\bar{w})+\log(1-\bar{z}w)\big)$, we check that
\begin{equation}\label{CA417}
\begin{split}
&\tau\,\partial_z H_0(z,w)+\bar{\tau}\,\partial_{\bar{w}}H_0(z,w)=\frac{1}{4\pi}\left(\frac{-iw\cdot\bar{w}}{1-z\bar{w}}+\frac{i\bar{w}\cdot z}{1-z\bar{w}}\right)=\frac{-i(z-w)}{4\pi(z-\bar{w}^{-1})}.
\end{split}
\end{equation}
Thus \eqref{CA404}, \eqref{CA410}, and \eqref{CA417} lead to
\begin{equation*}
\begin{split}
\big|\tau\,\partial_z H(z,w)+\bar{\tau}\,\partial_{\bar{w}}H(z,w)\big|\leq\frac{1}{4\pi}\left|\frac{z-w}{z-\bar{w}^{-1}}\right|+|\nabla R_0(z,w)|\leq C,
\end{split}
\end{equation*}
where we have applied $\mathrm{dist}(w,\Gamma_0)\leq\mathbf{d}/4$ to ensure that $|z-w|\leq C|z-\bar{w}^{-1}|$.

Now \eqref{CA415} is proved in the similar methods. Suppose that for some $j\in\{1,\cdots,k-1\}$, we have $\mathrm{dist}(w,\Gamma_j)\leq\mathbf{d}/4$, then Lemma \ref{LL41} guarantees that we can decompose $H(z,w)$ as
\begin{equation*}
H(z,w)=H_j(z,w)+R_j(z,w),
\end{equation*}
with $H_j(z,w)$ provided by \eqref{hk}. Note that $\tau=i(w-b_j)$, thus we have
\begin{equation}\label{CA418}
\begin{split}
&\tau\,\partial_z H_j(z,w)+\bar{\tau}\,\partial_{\bar{w}}H_j(z,w)\\
&=\frac{\,r_j^2}{4\pi}\left(\frac{i(w-b_j)\,(z-b_j)^{-1}}{(z-b_j)(\bar{w}-\bar{b}_j)-r_j^2 }+\frac{-i(\bar{w}-\bar{b}_j)\,(\bar{w}-\bar{b}_j)^{-1}}{(z-b_j)(\bar{w}-\bar{b}_j)-r_j^2 } \right)\\
&=\frac{-i\,r_j^2}{4\pi(z-b_j)(\bar{w}-\bar{b}_j)}\cdot\frac{(z-b_j)-(w-b_j)}{(z-b_j)-r_j^2(\bar{w}-\bar{b}_j)^{-1}}.
\end{split}
\end{equation}
Combining \eqref{CA404}, \eqref{CA410}, and \eqref{CA418} implies that
\begin{equation*}
\begin{split}
&\big|\tau\,\partial_z H(z,w)+\bar{\tau}\,\partial_{\bar{w}}H(z,w)\big|\\
&\leq\frac{1}{4\pi}\left|\frac{(z-b_j)-(w-b_j)}{(z-b_j)-r_j^2(\bar{w}-\bar{b}_j)^{-1}}\right|+|\nabla R_j(z,w)|\leq C.
\end{split}
\end{equation*}
We mention that $r_j^2(\bar{w}-\bar{b}_j)^{-1}$ is the reflection of $(w-b_j)$ with respect to $\Gamma_j$, thus $\mathrm{dist}(w,\Gamma_j)\leq\mathbf{d}/4$ ensures that $|z-w|\leq C|(z-b_j)-r_j^2(\bar{w}-\bar{b}_j)^{-1}|$. Consequently, \eqref{CA415} is valid and the proof is completed.
\end{proof}

As an extension of Lemma \ref{L43}, we deduce that the cancellation properties are valid for higher order derivatives.
\begin{lemma}\label{LL45}
Suppose that $\Omega$ is the circular domain given by \eqref{CA414}--\eqref{CA402}, and for some $j\in\{0,1,\cdots,k-1\}$, we have $\mathrm{dist}(w,\Gamma_j)\leq\mathbf{d}/4$, then there is a constant $C$ depending only on $\Omega$ such that,
\begin{equation}\label{CA419}
\big|\left(\tau\,\partial_z +\bar{\tau}\,\partial_{\bar{w}}\right)\nabla H(z,w)\big|\leq C|z-w_j|^{-1},
\end{equation}
where $w_j= b_j+r_j(w-b_j)/|w-b_j|$ is the projection of $w$ to $\Gamma_j$ given by Lemma \ref{T41}, and $\tau=i\cdot(w-b_j)$ is a tangential vector of $\Gamma_j$ at $w_j$.
\end{lemma}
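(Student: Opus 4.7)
The plan is to decompose $H(z,w)=H_j(z,w)+R_j(z,w)$ via Lemma \ref{LL41} and treat the two parts separately. The regular remainder $R_j$ has uniformly bounded derivatives by \eqref{CA404}, so $(\tau\partial_z+\bar\tau\partial_{\bar w})\nabla R_j$ is bounded by a constant, which is at most $C|z-w_j|^{-1}$ because $\Omega$ is bounded. Hence the problem reduces to estimating the principal part involving $H_j$.

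For $H_j$, I would start from the explicit formula for $(\tau\partial_z+\bar\tau\partial_{\bar w})H_j$ obtained in Lemma \ref{L43}. Writing $\zeta=z-b_j$ and $\omega=w-b_j$, a partial-fraction manipulation in $\zeta$ rewrites it as
\begin{equation*}
(\tau\partial_z+\bar\tau\partial_{\bar w})H_j(z,w)=\frac{-i\omega}{4\pi\zeta}+\frac{-i(r_j^2-|\omega|^2)}{4\pi(\zeta\bar\omega-r_j^2)}\qquad\text{for }j\in\{1,\ldots,k-1\},
\end{equation*}
with an analogous identity for $j=0$ of the form $-i/(4\pi)+(-i(1-|w|^2))/(4\pi(z\bar w-1))$ obtained by a polynomial long division. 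In both cases the first summand is smooth on $\overline{\Omega}$ (since $|\zeta|\geq r_j$, or is just a constant when $j=0$), while the second has a simple pole at the reflected point $\widetilde w_j\triangleq b_j+r_j^2/\bar\omega$ with coefficient that vanishes to first order in $\mathrm{dist}(w,\Gamma_j)$, because $|r_j^2-|\omega|^2|\leq C\,\mathrm{dist}(w,\Gamma_j)$. This hidden cancellation is what produces the advertised gain of one order of singularity over Lemma \ref{T41}.

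I would then apply each first-order derivative to the above expression. Since $\bar\zeta$ does not appear, $\partial_{\bar z}$ acts trivially; for $\partial_z,\partial_w,\partial_{\bar w}$ the commutators $[\partial_w,\tau\partial_z+\bar\tau\partial_{\bar w}]=i\partial_z$ and $[\partial_{\bar w},\tau\partial_z+\bar\tau\partial_{\bar w}]=-i\partial_{\bar w}$ generate lower-order contributions that are already controlled by Lemma \ref{T41}. The remaining contributions are of the two types $(\text{bounded})/(\zeta\bar\omega-r_j^2)$ and $(\text{bounded})\cdot(r_j^2-|\omega|^2)/(\zeta\bar\omega-r_j^2)^2$; using the identity $|\zeta\bar\omega-r_j^2|=|\omega|\cdot|z-\widetilde w_j|$, these are bounded respectively by $C|z-\widetilde w_j|^{-1}$ and $C\,\mathrm{dist}(w,\Gamma_j)\,|z-\widetilde w_j|^{-2}$.

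The final task is a geometric conversion to the projection $w_j=b_j+r_j(w-b_j)/|w-b_j|$ appearing in the statement. Since the reflected point $\widetilde w_j$ lies inside the hole $B_j$ while $z\in\Omega$, one has the lower bound $|z-\widetilde w_j|\geq c\,\mathrm{dist}(w,\Gamma_j)$, and a direct computation gives $|\widetilde w_j-w_j|\leq C\,\mathrm{dist}(w,\Gamma_j)$. A short case split according to whether $|z-w_j|$ is smaller or larger than $\mathrm{dist}(w,\Gamma_j)$ then yields both $|z-\widetilde w_j|^{-1}\leq C|z-w_j|^{-1}$ and $\mathrm{dist}(w,\Gamma_j)\,|z-\widetilde w_j|^{-2}\leq C|z-w_j|^{-1}$, which combined with the preceding step completes the proof of \eqref{CA419}. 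The main obstacle is the bookkeeping—extracting the hidden cancellation through the partial-fraction rewriting and then executing the two-projection comparison carefully enough to convert the natural singularity at $\widetilde w_j$ into the $|z-w_j|^{-1}$ bound demanded by the lemma.
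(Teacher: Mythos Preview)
Your proposal is correct and follows the same route as the paper: decompose $H=H_j+R_j$ via Lemma~\ref{LL41}, dispose of $R_j$ by its uniform regularity, and use the explicit form of $H_j$ to extract the factor $r_j^2-|\omega|^2$ (respectively $1-|w|^2$) that compensates one order of singularity, before converting from the reflected point $\widetilde w_j$ to the projection $w_j$. The only cosmetic difference is that the paper computes $(\tau\partial_z+\bar\tau\partial_{\bar w})\partial_zH_0$ directly (equation~\eqref{CA421}) rather than rewriting the Lemma~\ref{L43} output in partial fractions and then differentiating with commutator corrections; the resulting expressions and the final geometric comparison (the paper's use of $|z-\bar w^{-1}|\ge (1-|w|)/|w|$ and \eqref{CA426} is precisely the $j=0$ instance of your two-case split) are the same.
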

\begin{proof}
We prove \eqref{CA419} for $\nabla=\partial_z$ under the condition that $\mathrm{dist}(w,\Gamma_0)\leq\mathbf{d}/4$. While the remaining cases follow in the same approaches. 

According to Lemma \ref{LL41}, when $\mathrm{dist}(w,\Gamma_0)\leq\mathbf{d}/4$, we admit the decomposition
$$H(z,w)=H_0(z,w)+R_0(z,w),$$
with $H_0=(4\pi)^{-1}\big(\log(1-z\bar{w})+\log(1-\bar{z}w)\big)$. Thus we calculate directly that
\begin{equation}\label{CA421}
\begin{split}
\big(\tau\,\partial_z +\bar{\tau}\,\partial_{\bar{w}}\big)\partial_z H_0(z,w)
&=\frac{-1}{4\pi}\big(\tau\,\partial_z +\bar{\tau}\,\partial_{\bar{w}}\big)\frac{\bar{w}}{1-z\bar{w}}=\frac{i\,(|w|^2-1)}{4\pi\,(z-\bar{w}^{-1})}\cdot\frac{1}{1-z\bar{w}}.
\end{split}
\end{equation}
In particular, if $\mathrm{dist}(w,\Gamma_0)\leq\mathbf{d}/4$ and $z$ lies in the unit disc, we check that
\begin{equation*}
|z-\bar{w}^{-1}|\geq\left|\frac{w}{|w|}-\bar{w}^{-1}\right|=\frac{1-|w|}{|w|},
\end{equation*}
which together with \eqref{CA404}, \eqref{CA426}, and \eqref{CA421} gives 
\begin{equation*}
\left|\big(\tau\,\partial_z +\bar{\tau}\,\partial_{\bar{w}}\big)\partial_z H(z,w)\right|\leq C|1-z\bar{w}|^{-1}+|\nabla^2R_0(z,w)|\leq C|z-w_0|^{-1}.
\end{equation*}
We therefore finish \eqref{CA419} for $j=0$ and the proof of lemma is completed.
\end{proof}

\begin{remark} 
Let us go back to the real variables. Lemma \ref{LL45} in fact indicates that, if $\mathrm{dist}(x,\Gamma_j)\leq\mathbf{d}/4$ for some $j\in\{0,1,\cdots,k-1\}$, then we can apply \eqref{CA422}, \eqref{CA423}, and \eqref{CA419} to find a constant $C$ depending only on $\Omega$, such that for $p=1,2$,
\begin{equation}\label{C422}
\big|\sum_{s=1}^2(\tau_s\partial_{x_s}+
\tau_s\partial_{y_s})\partial_{y_p}N(x,y)\big|
\leq C\left|y-x_j\right|^{-1}.
\end{equation}
with $x_j\triangleq b_j+ r_j(x-b_j)/|x-b_j|.$ Compared with \eqref{CA449}, the singularity of \eqref{C422} is actually cut down by order one.
\end{remark}
\begin{remark}
The above calculations are horrible if they are carried out in real variables. However, with the help of the complex variables, we clearly identify the principal parts of $N(z,w)$ near $\partial\Omega$,
$$\log\left(1-z\cdot\bar{w}\right)\quad\mathrm{and}\quad\log\left(1-\frac{r_j}{z-b_j}\frac{r_j}{\bar{w}-\bar{b}_j}\right),$$
as well as the key structure cancelling out the singularity,
$$\tau\,\partial_z+\bar{\tau}\,\partial_{\bar{w}}~~\mathrm{with}\ \tau=i\,(w-b_j).$$
These calculations once again illustrate the great usefulness of complex variables.
\end{remark}

Next, let us derive the analogous cancellation properties of the Green's function on the general $k$-connected domain $\Omega$ satisfying 
$$\partial\Omega=\bigcup_{i=0}^{k-1}\Gamma_i,~~\Gamma_i\cap\Gamma_j=\varnothing\ \  \mathrm{if}\ i\neq j,$$
where $\{\Gamma_j\}_{j=0}^{k-1}$ are $k-1$ general simply closed smooth curves, with $\Gamma_0$ the outmost part. Note that \eqref{CA420}--\eqref{CA423} and Lemma \ref{L43}  suggest us focusing on the harmonic function $H(z,w)$ given by \begin{equation}\label{CA430}
\begin{cases}
\Delta H(\cdot,w)=0\ \mathrm{in}\ \Omega,\\
\partial_nH(\cdot,w)=-\partial_n\Gamma(\cdot,w)+l^{-1}\ \mathrm{on}\ \partial\Omega.
\end{cases}
\end{equation}

By virtue of Lemma \ref{L27}, there is a conformal mapping $\varphi$ between $\Omega$ and some circular domain $\mathcal{C}$, which can be extended smoothly to the boundary as 
$\varphi:\overline{\Omega}\rightarrow\overline{\mathcal{C}}.$ Without loss of generality, we assume $B_0$ is the unit disc and $\mathcal{C}$ satisfies 
\begin{equation*}
\begin{split}
\mathcal{C}&=B_0\setminus\big(\bigcup_{j=0}^{k-1}\overline{B_j}\big),~~B_j\triangleq\{\zeta\in B_0\,\big|\,|\zeta-b_j|<r_j\},\\
&\mbox{$\varphi$ maps $\Gamma_j$ onto $\partial B_j$ for $j\in\{0,1,\cdots,k-1\}$}. 
\end{split}
\end{equation*} 
 
In particular, for $\zeta,\eta\in\mathcal{C}$, we consider the corresponding harmonic function  $H_\mathcal{C}(\zeta,\eta)$ on $\mathcal{C}$ solved by 
\begin{equation}\label{CA431}
\begin{cases}
\Delta H_\mathcal{C}(\cdot,\eta)=0\ \mathrm{in}\ \mathcal{C},\\
\partial_nH_\mathcal{C}(\cdot,\eta)=-\partial_n\Gamma(\cdot,\eta)+l^{-1}\ \mathrm{on}\ \partial\mathcal{C}.
\end{cases}
\end{equation} 
Then, let us define the pull-back harmonic function $H^*(z,w)$ on $\Omega$ by
\begin{equation}\label{CA424}
H^*(z,w)=H_\mathcal{C}(\zeta,\eta)~~~~\mathrm{with}\ z,w\in\Omega\  \mathrm{and}\ \zeta=\varphi(z),\,\eta=\varphi(w).
\end{equation}
Now we prove the key assertion that the error  $E(z,w)\triangleq H(z,w)-H^*(z,w)$ between these two functions is regular.
\begin{lemma}\label{LL46}
Suppose that $\Omega$ is a $k$-connected domain satisfying \eqref{C101}, with $E(z,w)$, $H(z,w)$, and $H^*(z,w)$ defined above. Then for any $s\in\mathbb{N}^+$, there is a constant $C$ depending only on $s$, $\Omega$, and $\varphi$, such that
\begin{equation}\label{CA425}
\|\nabla^sE(\cdot,w)\|_{L^\infty(\Omega)}\leq C.
\end{equation}
\end{lemma}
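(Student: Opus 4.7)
The plan is to show that $E(\cdot,w)$ satisfies a Neumann problem on $\Omega$ whose boundary data is smooth uniformly in $w \in \overline{\Omega}$, after which Schauder estimates immediately yield \eqref{CA425}. The key conceptual point is that composing a harmonic function with a holomorphic map produces another harmonic function, so $H^*(z,w) = H_\mathcal{C}(\varphi(z),\varphi(w))$ is harmonic in $z$ on $\Omega$, and therefore so is $E(\cdot,w)$.

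To compute the boundary behaviour of $E$, I will use Lemma \ref{L28}: if $n_z$ denotes the unit outer normal at $z_0 \in \partial\Omega$, then $\varphi'(z_0)\cdot n_z = |\varphi'(z_0)|\,n_\zeta$, where $n_\zeta$ is the unit outer normal of $\partial\mathcal{C}$ at $\varphi(z_0)$. The chain rule together with holomorphicity of $\varphi$ gives
\begin{equation*}
\partial_{n_z} H^*(z_0,w) = |\varphi'(z_0)|\bigl(-\partial_{n_\zeta}\Gamma(\varphi(z_0),\varphi(w)) + l_\mathcal{C}^{-1}\bigr),
\end{equation*}
where $l_\mathcal{C}$ is the boundary length of $\mathcal{C}$. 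The crucial algebraic identity is
\begin{equation*}
\Gamma(z,w) = \Gamma(\varphi(z),\varphi(w)) - \frac{1}{2\pi}\log|F(z,w)|, \quad F(z,w) \triangleq \frac{\varphi(z)-\varphi(w)}{z-w},
\end{equation*}
with $F$ extended along the diagonal by $F(z,z) = \varphi'(z)$. By Lemma \ref{L28} and the holomorphicity of $\varphi$, $F$ is a holomorphic, non-vanishing, bounded function on $\overline{\Omega}\times\overline{\Omega}$, so $\log|F|$ is smooth jointly in $(z,w)$.

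Differentiating this identity along $n_z$ and combining with the formula for $\partial_{n_z} H^*$, the singular contributions involving $\partial_n\Gamma$ cancel exactly, so that
\begin{equation*}
\partial_{n_z} E(z_0,w) = \frac{1}{2\pi}\partial_{n_z}\log|F(z_0,w)| + l^{-1} - |\varphi'(z_0)|\,l_\mathcal{C}^{-1},
\end{equation*}
which is a $C^\infty$ function of $z_0 \in \partial\Omega$ with every $C^s$-norm bounded uniformly in $w \in \overline{\Omega}$. The Neumann compatibility condition for $E$ follows automatically from the compatibility conditions of the problems defining $H$ and $H^*$ (the latter is verified by the change of variables $\zeta = \varphi(z)$ with $dS_\zeta = |\varphi'(z)|\,dS_z$). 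Applying the Schauder estimates of \cite[Chapter 3]{ADN} to the Neumann problem $\Delta E = 0$ in $\Omega$ with this boundary datum then yields \eqref{CA425}; the fact that $E$ is only determined up to an additive constant is harmless, since we are estimating derivatives of order $s \geq 1$. The main technical work lies in organizing the conformal chain-rule identities and verifying the diagonal smoothness of $F$; once this bookkeeping is in place, the cancellation of singularities is automatic and the remainder is standard elliptic theory.
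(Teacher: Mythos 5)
Your proof is correct and follows essentially the same route as the paper: express $E(\cdot,w)$ as the solution to a Neumann problem, use the conformal chain rule from Lemma \ref{L28} to show the singular Neumann data of $H$ and $H^*$ cancel, observe the residual boundary datum reduces to derivatives of $\log\bigl|\tfrac{\varphi(z)-\varphi(w)}{z-w}\bigr|$ (smooth and non-vanishing on $\overline{\Omega}\times\overline{\Omega}$ by Lemma \ref{L28}), and conclude by Schauder estimates. Your explicit isolation of the quotient $F(z,w)$, the remark on joint smoothness across the diagonal, and the note on the Neumann compatibility condition are just a more careful write-up of the same computation the paper performs via the identity \eqref{CA433}.
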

\begin{proof}
According to the definition \eqref{CA424}, the error term is given by
\begin{equation*} 
\begin{split}
E(z,w) 
&=H(z,w)-H_\mathcal{C}(\varphi(z),\varphi(w)).
\end{split}
\end{equation*}
In particular, for $z\in\partial\Omega$, let $\alpha=\alpha_1+i\alpha_2$ be the unit outer normal vector of $\partial\Omega$ at $z$, then we apply \eqref{CC408}$_2$ and the chain rule to check that
\begin{equation*}
\begin{split}
\partial_n(H_\mathcal{C}\circ\varphi)(\cdot,w)&=\alpha_i\cdot\partial_i\varphi_j
\cdot\partial_j H_{\mathcal{C}}\big|_{(\varphi(z),\varphi(w))}\\
&=-\alpha_i\cdot\partial_i\varphi_j
\cdot\partial_j \Gamma\big|_{(\varphi(z),\varphi(w))}+|\varphi'(z)|\cdot l_\mathcal{C}^{-1}\\
&=-\partial_n(\Gamma\circ\varphi)(\cdot,w)+|\varphi'(z)|\cdot l_\mathcal{C}^{-1},
\end{split}
\end{equation*}
where $l_\mathcal{C}$ is the total length of $\partial\mathcal{C}$. Note that we have applied Lemma \ref{L28} in the second line: The conformal mapping $\varphi$ preserves the angle, thus $\alpha_i\cdot\partial_i\varphi_j\cdot\partial_j$ is in fact the normal derivatives at $\varphi(z)\in\partial\mathcal{C}$.

Consequently, \eqref{CA430} and \eqref{CA431} imply that $E(z,w)$ solves the Neumann problem
\begin{equation}\label{CA428}
\begin{cases}
\Delta E(\cdot,w)=0\ \mathrm{in}\ \Omega,\\
\partial_nE(\cdot,w)=\partial_n(\Gamma\circ\varphi-\Gamma)(\cdot,w)+(l^{-1}-|\varphi'(z)|\cdot l_\mathcal{C}^{-1})\ \mathrm{on}\ \partial\Omega,
\end{cases}
\end{equation}
where $l$ is the total length of $\partial\Omega$. Recalling that $\Gamma(z,w)=(2\pi)^{-1}\log|z-w|$, we apply \eqref{CA201} to check that
\begin{equation}\label{CA433}
\begin{split}
\partial_n(\Gamma\circ\varphi-\Gamma)(z,w)
=\frac{1}{2\pi}\,\mathrm{Re}\left(\alpha\cdot\partial_z\log\big(\frac{\varphi(z)-\varphi(w)}{z-w}\big)\right).
\end{split}
\end{equation}
Note that Lemma \ref{L28} guarantees that 
$$\varphi(z)\in C^\infty(\overline{\Omega}),~~
\min_{z,w\in\overline{\Omega}}
\left|\frac{\varphi(z)-\varphi(w)}{z-w}\right|>0,$$ 
which implies that \eqref{CA433} and $\partial_nE(\cdot,w)$ are smooth and bounded uniformly on $\partial\Omega$. 
We therefore apply the Schauder estimates (\cite[Chapter 3]{ADN}) of \eqref{CA428} to yield that for any $s\in\mathbb{N}^+$, there is a constant $C$ depending on $s$, $\mathbf{d}$, and $\Omega$, such that
\begin{equation*} 
\|\nabla^s E(\cdot,w)\|_{L^\infty(\Omega)}\leq C,
\end{equation*}
which gives \eqref{CA425} and finishes the proof.
\end{proof}

From a viewpoint, with the help of conformal mappings, Lemma \ref{LL46} reduces the general $k$-connected domains to the standard circular domains. 
It ensures that $H(z,w)$ and $H_\mathcal{C}(\zeta,\eta)$ are quite similar, thus the previous cancellation structure for $H_\mathcal{C}(\zeta,\eta)$ should be preserved for $H(z,w)$ as well. The next lemma discusses such issue.
\begin{lemma}\label{LL47}
Suppose that $\Omega$ is a $k$-connected domain satisfying \eqref{C101}, and $H(z,w)$ is given by \eqref{CA430}. If $\mathrm{dist}(w,\Gamma_0)\leq\mathbf{d}/4$, with $\mathbf{d}$ given by \eqref{CA401}. Then there is a constant $C$ depending only on $\Omega$ and $\varphi$, such that 
\begin{equation}\label{C429}
\big|(\tau\partial_z+\bar{\tau}\partial_{\bar{w}})\nabla H(z,w)\big|
\leq C\left|z-w_0\right|^{-1},
\end{equation}
where $w_0\triangleq \varphi^{-1}\big(\varphi(w)/|\varphi(w)|\big)$ is the projection of $w$ on $\Gamma_0$ induced by $\varphi$, and $\tau=\tau_1+i\tau_2$ is the unit tangential vector of $\partial\Omega$ at $w_0$.

Similarly, if $\mathrm{dist}(x,\Gamma_j)\leq\mathbf{d}/4$ for some $j\in\{1,\cdots,k-1\}$, then it holds that
\begin{equation*} 
\big|(\tau\partial_z+\bar{\tau}\partial_{\bar{w}})\nabla H(z,w)\big|
\leq C\left|z-w_j\right|^{-1},
\end{equation*}
where $w_j\triangleq \varphi^{-1}\big(b_j+(\varphi(w)-b_j)/|\varphi(w)-b_j|\big)$ is the projection of $w$ on $\Gamma_j$ induced by $\varphi$, and $\tau=\tau_1+i\tau_2$ is the unit tangential vector of $\partial\Omega$ at $w_j$.
\end{lemma}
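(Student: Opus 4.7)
The strategy is to transport the cancellation structure of Lemma \ref{LL45} from the circular domain $\mathcal{C}$ back to $\Omega$ through the decomposition of Lemma \ref{LL46}. Writing $H(z,w)=H^*(z,w)+E(z,w)$ with $H^*(z,w)=H_{\mathcal{C}}(\varphi(z),\varphi(w))$, the error $E$ has uniformly bounded derivatives of every order on $\overline{\Omega}\times\overline{\Omega}$, so its contribution to $(\tau\partial_z+\bar\tau\partial_{\bar w})\nabla H$ is bounded by a constant, which is $\leq C|z-w_0|^{-1}$ since $|z-w_0|\leq \mathrm{diam}(\Omega)$. All remaining work concerns $H^*$.

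I will treat $\nabla=\partial_z$ and $j=0$ in detail, the other single-derivative cases and $j\in\{1,\ldots,k-1\}$ following by identical arguments (substituting $H_j$ for $H_0$ and using $\tau=i(w-b_j)$ via the explicit formula \eqref{hk}, together with the symmetry $H(z,w)=H(w,z)$ to cover $\nabla=\partial_{\bar z}$, $\partial_w$, $\partial_{\bar w}$). Setting $\zeta=\varphi(z)$ and $\eta=\varphi(w)$, the chain rule gives
\[
(\tau\partial_z+\bar\tau\partial_{\bar w})\partial_z H^*=\tau\varphi''(z)\,\partial_\zeta H_{\mathcal{C}}+\varphi'(z)\Bigl[(\tau\varphi'(z))\partial_\zeta^2 H_{\mathcal{C}}+(\bar\tau\overline{\varphi'(w)})\partial_\zeta\partial_{\bar\eta}H_{\mathcal{C}}\Bigr].
\]
The first summand is $O(|\partial_\zeta H_{\mathcal{C}}|)\leq C|\zeta-\eta_0|^{-1}$ by Lemma \ref{T41}, which by the bi-Lipschitz estimate of Lemma \ref{L28}(a) is $\leq C|z-w_0|^{-1}$, as required.

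The main work is the bracket. Introducing the auxiliary vector $\sigma_0\triangleq \tau\varphi'(w_0)$, I split both coefficients as $\tau\varphi'(z)=\sigma_0+\tau(\varphi'(z)-\varphi'(w_0))$ and $\bar\tau\overline{\varphi'(w)}=\bar\sigma_0+\bar\tau(\overline{\varphi'(w)}-\overline{\varphi'(w_0)})$. By Lemma \ref{L28}(b), $\sigma_0$ is exactly a tangent vector to $\partial B_0$ at $\eta_0$, i.e.\ a real multiple of $i\eta_0$; hence the leading piece $\sigma_0\partial_\zeta^2H_{\mathcal{C}}+\bar\sigma_0\partial_\zeta\partial_{\bar\eta}H_{\mathcal{C}}=\partial_\zeta(\sigma_0\partial_\zeta H_{\mathcal{C}}+\bar\sigma_0\partial_{\bar\eta}H_{\mathcal{C}})$ falls under the hypothesis of Lemma \ref{LL45} (whose proof used only that the tangent vector points in the direction $i\eta_0$), producing the bound $C|\zeta-\eta_0|^{-1}$. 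The two ``off-tangent'' corrections pair the small factors $|\varphi'(z)-\varphi'(w_0)|\leq C|z-w_0|$ and $|\varphi'(w)-\varphi'(w_0)|\leq C|w-w_0|$ with $|\nabla^2 H_{\mathcal{C}}|\leq C|\zeta-\eta_0|^{-2}$ from Lemma \ref{T41}.

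The main obstacle lies precisely in absorbing these off-tangent remainders. The naive estimate would produce a term of order $(|z-w_0|+|w-w_0|)\,|\zeta-\eta_0|^{-2}$, which is not evidently $\leq C|\zeta-\eta_0|^{-1}$ when $|w-w_0|$ exceeds $|\zeta-\eta_0|$. The resolution is the sharpened pointwise control on $|1-\zeta\bar\eta|$ (and its analog $|(\zeta-b_j)(\bar\eta-\bar b_j)-r_j^2|$ in the inner cases) already exploited in Lemmas \ref{L43} and \ref{LL45}, namely the identity $|1-\zeta\bar\eta|^2=|\zeta-\eta|^2+(1-|\zeta|^2)(1-|\eta|^2)$. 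This forces the denominator to simultaneously absorb a factor of $|\zeta-\eta_0|$ and a factor proportional to $|\eta-\eta_0|$, so the apparent over-singularity cancels and the desired estimate $|(\tau\partial_z+\bar\tau\partial_{\bar w})\partial_z H^*|\leq C|\zeta-\eta_0|^{-1}\leq C|z-w_0|^{-1}$ is obtained; combining with the trivial $E$-bound completes the proof.
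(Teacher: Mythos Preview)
Your proposal is correct and follows essentially the same route as the paper: decompose $H=H^*+E$ via Lemma~\ref{LL46}, push the operator through the chain rule to reach $H_{\mathcal{C}}$, freeze the coefficients at $w_0$ so that the leading piece is governed by Lemma~\ref{LL45}, and then absorb the off-tangent corrections. The only cosmetic difference is in the last step: the paper controls the remainder by bounding $|\nabla^2 H_{\mathcal C}|$ with the \emph{reflection} point $\eta/|\eta|^2$ (rather than the projection $\eta_0=\eta/|\eta|$) and then uses the elementary inequalities $|\zeta-\eta/|\eta|^2|\ge|\zeta-\eta_0|$ and $|\zeta-\eta/|\eta|^2|\ge(1-|\eta|)/|\eta|\ge c\,|\eta-\eta_0|$, which is exactly the content of your identity $|1-\zeta\bar\eta|^2=|\zeta-\eta|^2+(1-|\zeta|^2)(1-|\eta|^2)$ once one notes $|1-\zeta\bar\eta|=|\eta|\,|\zeta-\eta/|\eta|^2|$.
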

\begin{proof}
We only prove \eqref{C429} for $\nabla=\partial_z$ and $j=0$, while the general cases follow in the same approaches. We still denote that $\zeta=\varphi(z)$ and $\eta=\varphi(w)$, while in view of Lemma \ref{L28}, it is harmless to assume that $ \mathrm{dist}(\eta,\partial B_0)\leq 1/4$.

According to Lemma \ref{LL46}, it holds that 
\begin{equation}\label{CA435}
\begin{split}
&\left|(\tau\,\partial_z+\bar{\tau}\,\partial_{\bar{w}})\partial_z H(z,w)\right|\\
&\leq\left|(\tau\,\partial_z+\bar{\tau}\,\partial_{\bar{w}})\,\partial_z H^*(z,w)\right|+\left|\nabla^2E(z,w)\right|\\
&\leq\left|(\tau\,\partial_z+\bar{\tau}\,\partial_{\bar{w}})\,\partial_z H_\mathcal{C} (\zeta,\eta) \right|+C.
\end{split}
\end{equation}
By virtue of the chain rule and \eqref{CA201}, for smooth functions $f(\zeta)$ and $g(\eta)$ defined on $\mathcal{C}$, we argue that
\begin{equation}\label{CA434}
\begin{split}
\partial_z f(\zeta) 
&=\partial_z\varphi(z)\cdot\partial_\zeta f(\zeta)+\partial_z\overline{\varphi(z)}\cdot\partial_{\bar{\zeta}} f(\zeta) =\varphi'(z)\cdot\partial_{\zeta}f(\zeta),\\
\partial_{\bar{w}} g(\eta) 
&=\partial_{\bar{w}}\varphi(w)\cdot\partial_\eta g(\eta)+\partial_{\bar{w}}\overline{\varphi(w)}\cdot\partial_{\bar{\eta}} g(\eta) =\overline{\varphi'(w)}\cdot\partial_{\bar{\eta}}g(\eta).
\end{split}
\end{equation}
Thus let us utilize \eqref{CA434} in \eqref{CA435} and carefully calculate that 
\begin{equation}\label{CA440}
\begin{split}
&(\tau\,\partial_z+\bar{\tau}\,\partial_{\bar{w}})\,\partial_z H_\mathcal{C}(\zeta,\eta)\\
&=(\tau\,\partial_z+\bar{\tau}\,\partial_{\bar{w}})\left( \varphi'(z)\cdot \partial_\zeta H_\mathcal{C}(\zeta,\eta)\right)\\
&=\tau\cdot\varphi''(z)\cdot\partial_\zeta H_\mathcal{C}(\zeta,\eta)+\varphi'(z)\cdot(\tau\,\partial_z+\bar{\tau}\,\partial_{\bar{w}}) \left(\partial_\zeta H_\mathcal{C}(\zeta,\eta)\right)\\
&=\tau\cdot\varphi''(z)\cdot\partial_\zeta H_\mathcal{C}(\zeta,\eta)+\varphi'(z)\cdot \left(\tau\cdot\varphi'(z)\cdot\partial_\zeta+\bar{\tau}\cdot\overline{\varphi'(w)}\cdot\partial_{\bar{\eta}}\right) \partial_\zeta H_\mathcal{C}(\zeta,\eta).\\
\end{split}
\end{equation}
In addition, since $\varphi$ maps $\Gamma_0$ to $\partial B_0$, we invoke \eqref{CA201} to ensure that, for $w_0\in\Gamma_0$, the vector $\tau\cdot\varphi'(w_0)$ is parallel to the unit tangential vector $\nu=\nu_1+i\nu_2$ of $\partial B_0$ at $\varphi(w_0)$, which in particular implies $\tau\cdot\varphi'(w_0)=|\varphi'(w_0)|\,\nu.$ 

Therefore the above equation transforms \eqref{CA440} into
\begin{equation}\label{CA442}
\begin{split}
&(\tau\,\partial_z+\bar{\tau}\,\partial_{\bar{w}})\,\partial_z H_\mathcal{C}(\zeta,\eta)\\
&=\varphi'(z)\cdot \left(\tau\cdot\varphi'(z)\cdot\partial_\zeta+\bar{\tau}\cdot\overline{\varphi'(w)}\cdot\partial_{\bar{\eta}}\right) \partial_\zeta H_\mathcal{C}(\zeta,\eta)+\tau\cdot\varphi''(z)\cdot\partial_\zeta H_\mathcal{C}(\zeta,\eta)\\
&=\varphi'(z)\cdot \left(\tau\cdot\varphi'(w_0)\cdot\partial_\zeta+\bar{\tau}\cdot\overline{\varphi'(w_0)}\cdot\partial_{\bar{\eta}}\right) \partial_\zeta H_\mathcal{C}(\zeta,\eta)+\mathcal{R}\\
&=\varphi'(z)\cdot\big|\varphi'(w_0)\big|\cdot\big(\nu\cdot\partial_\zeta+\bar{\nu}\cdot\partial_{\bar{\eta}}\big)\,\partial_\zeta H_\mathcal{C}(\zeta,\eta)+\mathcal{R},
\end{split}
\end{equation}
where the remaining term $\mathcal{R}$ is given by
\begin{equation*}
\begin{split}
\mathcal{R}&=\tau\cdot\big(\varphi'(z)-\varphi'(w_0)\big)\cdot\partial_\zeta^2 H_\mathcal{C}(\zeta,\eta)+\bar{\tau}\cdot\big(\overline{\varphi'(w)}-\overline{\varphi'(w_0)}\big)\cdot\partial_{\bar{\eta}} \partial_\zeta H_\mathcal{C}(\zeta,\eta)\\
&\quad+\tau\cdot\varphi''(z)\cdot\partial_\zeta H_\mathcal{C}(\zeta,\eta) \\
\end{split}
\end{equation*}
Now, we take advantage of Lemma \ref{T41} and Lemma \ref{LL45} to deduce that
\begin{equation}\label{CA436}
\begin{split}
\big|\nabla H_\mathcal{C}(\zeta,&\eta)\big|\leq C\big|\zeta-\eta/|\eta|^2\big|^{-1},~~\big|\nabla^2 H_\mathcal{C}(\zeta,\eta)\big|\leq C\big|\zeta-\eta/|\eta|^2\big|^{-2},\\ 
&\big|(\nu\cdot\partial_\zeta+\bar{\nu}\cdot\partial_{\bar{\eta}})\,\partial_\zeta H_\mathcal{C}(\zeta,\eta)\big|\leq C \big|\zeta-\eta/|\eta|\big|^{-1},
\end{split}
\end{equation}
where $\varphi(w_0)=\eta/|\eta|$ is exactly the projection of $\eta$ at $\partial B_0$.
Consequently, we apply Lemma \ref{L28} and \eqref{CA436} to declare that 
\begin{equation}\label{CA441}
\begin{split}
|\mathcal{R}|&\leq C\,\big(|z-w_0|+|w-w_0|\big)\,|\nabla^2 H_\mathcal{C}(\zeta,\eta)|+C\,|\nabla H_\mathcal{C}(\zeta,\eta)|\\
&\leq C\,\big(\big|\zeta-\eta/|\eta|\big|+\big|\eta-\eta/|\eta|\big|\big)\cdot|\nabla^2 H_\mathcal{C}(\zeta,\eta)|+C\,|\nabla H_\mathcal{C}(\zeta,\eta)|\\
&\leq C\,\big(\big|\zeta-\eta/|\eta|\big|+\big|\eta-\eta/|\eta|\big|\big)\cdot\big|\zeta-\eta/|\eta|^2\big|^{-2}+C\,\big|\zeta-\eta/|\eta|^2\big|^{-1}\\
&\leq C\,\big|\zeta-\eta/|\eta|\big|^{-1}
\leq C\,\big|z-w_0\big|^{-1},
\end{split}
\end{equation}
where we have made use of the fact
\begin{equation*}
\begin{split}
\left|\zeta-\frac{\eta}{|\eta|^2}\right|\geq\left|\frac{\eta}{|\eta|}-\frac{\eta}{|\eta|^2}\right|  \geq C \left|\eta-\frac{\eta}{|\eta|}\right|~~\mathrm{and}~~\left|\zeta-\frac{\eta}{|\eta|^2}\right|\geq \left|\zeta-\frac{\eta}{|\eta|}\right|,
\end{split}
\end{equation*}
since $\zeta,\eta\in B_0$ and $\mathrm{dist}(\eta,\partial B_0)\leq 1/4.$ Thus, we substitute \eqref{CA436} and \eqref{CA441} into \eqref{CA442} and utilize Lemma \ref{L28} to argue that
\begin{equation*} 
\begin{split}
&\left|(\tau\,\partial_z+\bar{\tau}\,\partial_{\bar{w}})\,\partial_z H_\mathcal{C}(\zeta,\eta)\right|\\
&\leq C\left|\big(\nu\cdot\partial_\zeta+\bar{\nu}\cdot\partial_{\bar{\eta}}\big)\,\partial_\zeta H_\mathcal{C}(\zeta,\eta)\right|+|\mathcal{R}|\\
&\leq C\big|z-w_0\big|^{-1},
\end{split}
\end{equation*}
which gives \eqref{C429} and finishes the proof.
\end{proof}

Let us apply real variables to conclude all properties of Green's functions on general $k$-connected domains obtained in Subsection \ref{SS41} and \ref{SS42}. 
\begin{lemma}\label{LL48}
Suppose that $\Omega$ is a $k$-connected domain satisfying \eqref{C101}, $H(x,y)$ is the harmonic function solved by \eqref{CC408}, and $N(x,y)$ is the Green's function defined by \eqref{CC407}. 
Then the following assertions are true:

1). If $x$ is away from the boundary and $\mathrm{dist}(x,\partial\Omega)>\mathbf{d}/4$, with $\mathbf{d}$ defined in \eqref{CA401}, then there is a constant $C$ depending only on $\Omega$ and $\varphi$, such that
\begin{equation}\label{CA451}
\big|\nabla H(x,y)\big|+\big|\nabla^2 H(x,y)\big|\leq C.
\end{equation}
If for some $j\in\{0,1,\cdots,k-1\}$, we have $\mathrm{dist}(x,\Gamma_j)\leq\mathbf{d}/4$, then there is a constant $C$ depending only on $\Omega$ and $\varphi$ such that
\begin{equation}\label{CA452}
|\nabla H(x,y)|\leq C|y-x_j|^{-1},~~|\nabla^2 H(x,y)|\leq C|y-x_j|^{-2}.
\end{equation}
where $x_j\triangleq \varphi^{-1}\big(b_j+ r_j(\varphi(x)-b_j)/|\varphi(x)-b_j|\big)$ is the projection of $x$ on $\Gamma_j$ induced by $\varphi$. Combining \eqref{CA451} and \eqref{CA452} leads to the corresponding estimates on $N(z,w)$,
\begin{equation}\label{CA450}
|\nabla N(x,y)|\leq C|y-x|^{-1},~~|\nabla^2 N(x,y)|\leq C|y-x|^{-2},~~~~\forall x,y\in\Omega.
\end{equation}

2). $($Cancellation properties$)$  
If $x$ is close to $\partial\Omega$ and for some $j\in\{0,1,\cdots,k-1\}$, we have $\mathrm{dist}(x,\Gamma_j)\leq\mathbf{d}/4$, then there is a constant $C$ depending only on $\Omega$ and $\varphi$, such that for $p=1,2$,
\begin{equation}\label{CA443}
\big|\sum_{s=1}^2(\tau_s\partial_{x_s}+
\tau_s\partial_{y_s})\partial_{y_p}N(x,y)\big|
\leq C \left|y-x_j\right|^{-1},
\end{equation}
where $x_j$ is the projection given above, and $\tau=(\tau_1,\tau_2)$ is the unit tangential vector of $\Gamma_j$ at $x_j$. Compared with \eqref{CA452} and \eqref{CA450}, the singularity of \eqref{CA443} is actually cut down by order one.
\end{lemma}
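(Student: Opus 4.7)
The lemma is fundamentally a \emph{synthesis and translation} result: every bound listed has essentially been established in complex-variable form in Lemmas \ref{LL41}--\ref{LL47}, and the task is to combine them and rewrite them in real coordinates using the formulas \eqref{cc}. I would organize the argument in three blocks corresponding to the three displayed inequalities.

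\emph{Block 1 (estimates away from the boundary).} When $\mathrm{dist}(x,\partial\Omega)>\mathbf{d}/4$, I would transport the problem to the model circular domain $\mathcal{C}$ via the conformal mapping $\varphi$ of Lemma \ref{L27}. By Lemma \ref{LL46} the error $E(z,w)=H(z,w)-H_\mathcal{C}(\varphi(z),\varphi(w))$ has bounded derivatives of all orders, while Lemma \ref{L28} ensures $\mathrm{dist}(\varphi(x),\partial\mathcal{C})\gtrsim \mathbf{d}/4$. Then Lemma \ref{LL42} applied to $H_\mathcal{C}$ yields uniform bounds on $|\nabla H_\mathcal{C}|$ and $|\nabla^2H_\mathcal{C}|$; pulling back through $\varphi$ (which has bounded derivatives up to the closure) gives \eqref{CA451}.

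\emph{Block 2 (singular estimates near the boundary and the Green-function bound).} If $\mathrm{dist}(x,\Gamma_j)\le\mathbf{d}/4$, I would again write $H(z,w)=H_\mathcal{C}(\varphi(z),\varphi(w))+E(z,w)$, and on $\mathcal{C}$ invoke the explicit estimates of Lemma \ref{T41}, namely $|\nabla H_\mathcal{C}(\zeta,\eta)|\le C|\zeta-\eta_j|^{-1}$ and the analogous $|\nabla^2|$ bound, with $\eta_j=b_j+r_j(\eta-b_j)/|\eta-b_j|$. Lemma \ref{L28} gives $|\zeta-\eta_j|\simeq |z-x_j|$, so pulling back produces \eqref{CA452}. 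The estimate \eqref{CA450} on $N(x,y)=\Gamma(x,y)+H(x,y)$ then follows by adding $|\nabla\Gamma|\le C|x-y|^{-1}$ and $|\nabla^2\Gamma|\le C|x-y|^{-2}$, observing that whether $x$ lies near or far from $\partial\Omega$, one always has $|y-x_j|\ge c|y-x|$ so the $H$-contribution does not worsen the $\Gamma$-singularity (for $x$ far from $\partial\Omega$, use \eqref{CA451} directly).

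\emph{Block 3 (cancellation property).} This is the heart of the lemma and I expect it to be the main obstacle. The idea, already sketched around \eqref{CA420}--\eqref{CA423}, is to translate the combined derivative $\sum_s\tau_s(\partial_{x_s}+\partial_{y_s})\partial_{y_p}N$ into complex variables. Using \eqref{cc}, for $\tau=\tau_1+i\tau_2$ and a smooth function $f(z,w)$,
\begin{equation*}
\sum_{s=1}^2 \tau_s(\partial_{x_s}+\partial_{y_s})f=2\,\mathrm{Re}\bigl(\tau\,\partial_z f+\bar\tau\,\partial_{\bar w}f\bigr),
\end{equation*}
while \eqref{CC503} gives $\partial_{x_s}\Gamma(x,y)+\partial_{y_s}\Gamma(x,y)=0$, so the $\Gamma$-part of $N$ drops out and the whole expression reduces to $2\,\mathrm{Re}\bigl((\tau\partial_z+\bar\tau\partial_{\bar w})\,\partial_{y_p}H(z,w)\bigr)$. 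Next I would convert $\partial_{y_p}$ into $\partial_z\pm\partial_{\bar z}$ via \eqref{cc}, so that the whole quantity is a linear combination of terms of the form $(\tau\partial_z+\bar\tau\partial_{\bar w})\nabla_z H(z,w)$, which is exactly what Lemma \ref{LL47} bounds by $C|z-w_j|^{-1}=C|y-x_j|^{-1}$ after swapping the roles of $(x,y)$ using the symmetry of $N$. The delicate point is making sure the \emph{real} tangent vector $\tau$ at $x_j\in\Gamma_j$ corresponds, under the identification $\tau\leftrightarrow \tau_1+i\tau_2$, to a complex number in the tangent direction at the conformally projected point, so that Lemma \ref{LL47} applies with the correct choice of $\tau$; this is guaranteed by the angle-preservation property of conformal mappings in Lemma \ref{L28}(b), but must be checked carefully at the boundary point $x_j$ rather than at $x$ itself (paralleling the argument in \eqref{CA440}--\eqref{CA442}, where a remainder $\mathcal{R}$ absorbing the difference $\varphi'(z)-\varphi'(w_0)$ is controlled by the weaker bound $|z-w_0|^{-1}$). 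This single technical point — identifying the right boundary tangent and absorbing the mismatch into an $O(|y-x_j|^{-1})$ remainder — is where the bulk of the work lies; once it is done, \eqref{CA443} follows immediately.
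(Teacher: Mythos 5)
Your overall strategy — conformal transport to the model circular domain via Lemma \ref{L27}, control of the error by Lemma \ref{LL46}, import of the circular estimates from Lemmas \ref{LL42}, \ref{T41}, \ref{LL47}, and translation back to real coordinates via \eqref{cc} — is precisely what the paper's very brief proof invokes, and your Blocks~1 and~3 are sound (the aside about ``swapping the roles of $(x,y)$ by the symmetry of $N$'' is unnecessary: with the paper's convention $w=x_1+ix_2$, $z=y_1+iy_2$, Lemma \ref{LL47}'s hypothesis, tangent, and conclusion already match Lemma \ref{LL48}(2) after direct substitution, so no swap is needed).

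There is, however, a genuine flaw in Block~2. The geometric inequality you assert to combine \eqref{CA452} with the $\Gamma$-bound, namely $|y-x_j|\ge c|y-x|$ with $x_j$ the \emph{boundary projection} of $x$, is false. Take $y$ on the segment joining $x$ to $x_j$, very close to $x_j$: then $|y-x_j|\to 0$ while $|y-x|$ stays of order $\mathrm{dist}(x,\Gamma_j)$, so no uniform constant $c$ exists. The correct route — the one the paper actually uses in the proof of Lemma \ref{T41} and then transports conformally — is to observe that the principal part $H_j$ has its singularity at the \emph{inversion} (reflected) point, $\varphi^{-1}\bigl(b_j+r_j^2(\varphi(x)-b_j)/|\varphi(x)-b_j|^2\bigr)$, which lies outside $\overline\Omega$. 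For the inversion point $\tilde{x}_j$ one does have $|y-\tilde{x}_j|\ge c|y-x|$ for all $y\in\Omega$ (this is \eqref{CA426}, after pullback by $\varphi$), and the bound $|\nabla H|\lesssim |y-\tilde x_j|^{-1}$ is the one the explicit formulas \eqref{CA403}, \eqref{hk} actually yield. That is what produces \eqref{CA450}; the projection-point version of \eqref{CA452} is a weaker corollary and does not by itself recover the $|y-x|^{-1}$ bound.
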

\begin{proof}
We mention that \eqref{CA451}--\eqref{CA450} are obtained by the same method as Lemma \ref{LL47}, together with the previous Lemma \ref{LL42} and Lemma \ref{T41} on $H_\mathcal{C}(\zeta,\eta)$ as
\begin{equation*}
\begin{split}
\big|&\nabla^s H_\mathcal{C}(\zeta,\eta_j)\big|\leq C\,|\zeta-\eta_j|^{-s},~~\mbox{ when $\eta$ is near $\partial B_j$ and $s\in\{1,2\}$},\\
&\quad\big|\nabla H_\mathcal{C}(\zeta,\eta)\big|+\big|\nabla^2 H_\mathcal{C}(\zeta,\eta)\big|\leq C,~~\mbox{when $\eta$ is away from $\partial\mathcal{C}$}.
\end{split}
\end{equation*}
Moreover, combining \eqref{CA420}, \eqref{CA423}, and \eqref{CA443} leads to \eqref{CA443}. The proof of the lemma is therefore completed.
\end{proof}

\subsection{The commutators on multi-connected domains}\label{SS43}
\quad At the present stage, we can establish a type of commutator arguments which help us extending the Lions' theory (shown in \eqref{CC501}) from original Cauchy problems to general $k$-connected domains $\Omega$ satisfying \eqref{C101}.

Let $(\rho,u)$ be a smooth solution to the system \eqref{11}--\eqref{15} on $\Omega\times(0,T)$. Recalling that in the case $\Omega=\mathbb{R}^2$, the viscous effective flux
$F=(2\mu+\lambda)\mathrm{div}u-(P-\hat{P})$ 
is directly solved by
\begin{equation}\label{CA445}
F=\frac{\mathrm{d}}{\mathrm{d}t}\Delta^{-1}\mathrm{div}(\rho u)-[u_i,R_{ij}](\rho u_j),~~\mathrm{with}\ R_{ij}=\partial_i\partial_j\Delta^{-1}=\partial_i\Delta^{-1}\partial_j.
\end{equation} 
We will derive the similar equality when $\Omega$ is a general $k$-connected domain. In fact, by taking $\mathrm{div}(\cdot)$ on \eqref{11}$_2$, we apply boundary conditions \eqref{15} to deduce that $F$ solves the Neumann problem,
\begin{equation}\label{C501}
\begin{cases}
\Delta F=\mathrm{div}(\rho \dot{u})~~\mathrm{in}\, \Omega,\\
\partial_n F=\rho\dot{u}\cdot n+\mu n^\bot\cdot\nabla(Ku\cdot n^\bot)~~\mathrm{on}\, \partial\Omega.
\end{cases}
\end{equation}
Let $N(x,y)$ be the Green's function on $\Omega$ given by
\begin{equation}\label{CA438}
\begin{split}
N(x,y)&=\Gamma(x,y)+H(x,y),~~\Gamma(x,y)=\frac{1}{2\pi}\log|x-y|,\\
&\begin{cases}
\Delta H(\cdot,y)=0~~\mathrm{in}\ \Omega,\\
\partial_nH(\cdot,y)=-\partial_n \Gamma(\cdot-y)+l^{-1}~~\mathrm{on}\ \partial\Omega, 
\end{cases}
\end{split}
\end{equation}
then we utilize the Green's function $N(x,y)$ to write out $F(x)$ explicitly.

\begin{lemma}\label{LL49}
Suppose that $F\in C([0,T];C^1(\overline{\Omega})\cap C^2(\Omega))$ solves the problem (\ref{C501}) and $N(x,y)$ is the Green's function given by \eqref{CA438}, then it holds that,
\begin{equation} \label{qp11}
\begin{split}
F(x)
&=\frac{\mathrm{d}}{\mathrm{d}t}\Delta^{-1}\mathrm{div}(\rho u)-\left[u_i,\,\partial_i\,\Delta^{-1}\,\partial_j\right](\rho u_j)+B(x)+R(x),~~\forall x\in\Omega,
\end{split}
\end{equation}
where we have defined that for any smooth function $f(x)$ on $\Omega$, 
$$\Delta^{-1}f(x)\triangleq\int_\Omega N(x,y)\cdot f(y)\,dy.$$

Consequently, it holds that
\begin{equation*} 
\Delta^{-1}\mathrm{div}(\rho u)= \int_\Omega N(x,y)\cdot\mathrm{div}(\rho u)(y)\,dy=-\int_\Omega\partial_{y_j} N(x, y)\cdot \rho u_j(y)\,dy,
\end{equation*}
since $u\cdot n=0$. Similarly the commutator is given by  
\begin{equation*} 
\left[u_i,\,\partial_i\,\Delta^{-1}\,\partial_j\right](\rho u_j)=-\int_\Omega\partial_{x_i}\partial_{y_j} N(x, y)\cdot\big(u_i(x)-u_i(y)\big)\cdot \rho u_j (y)\,dy.
\end{equation*}
In addition, the extra terms $B(x)$ and $R(x)$ are provided by
\begin{equation}\label{bd}
\begin{split}
&B(x)=\int_\Omega\left(\partial_{x_i}\partial_{y_j}+\partial_{y_i}\partial_{y_j}\right) N(x, y)\cdot \rho u_i u_j (y)\,dy,\\
R(x)&=l^{-1}\int_{\partial \Omega}F(y)\,dS_y
-\mu\int_{\partial \Omega}N(x, y)\cdot(n^\bot\cdot\nabla(Ku\cdot n^\bot))\,dS_y.
\end{split}
\end{equation}
\end{lemma}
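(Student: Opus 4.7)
The plan is to express $F(x)$ via the Green's function $N(x,y)$ and then rewrite the resulting integrals through integration by parts combined with the continuity equation. Since $N(x,\cdot)$ satisfies $\Delta_y N(x,y) = \delta_x(y)$ in $\Omega$ with $\partial_{n_y} N(x,y) = l^{-1}$ on $\partial\Omega$, Green's second identity applied to $F$ and $N(x,\cdot)$ yields
\[
F(x) = \int_\Omega N(x,y)\Delta F(y)\,dy - \int_{\partial\Omega} N(x,y)\partial_n F(y)\,dS_y + l^{-1}\int_{\partial\Omega} F(y)\,dS_y.
\]
Substituting $\Delta F = \mathrm{div}(\rho\dot u)$ from \eqref{C501} and integrating the first term by parts produces a boundary integral of $N(x,y)\rho\dot u\cdot n$ that cancels exactly the $\rho\dot u\cdot n$ piece of $\partial_n F$; what remains is
\[
F(x) = -\int_\Omega \partial_{y_j} N(x,y)\,\rho\dot u_j(y)\,dy + R(x),
\]
with $R(x)$ collecting the $l^{-1}\int_{\partial\Omega} F\,dS$ average and the $\mu\, n^\bot\cdot\nabla(Ku\cdot n^\bot)$ boundary contribution exactly as in \eqref{bd}.

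Next I would use the continuity equation \eqref{11}$_1$ to write $\rho\dot u_j = (\rho u_j)_t + \partial_{y_i}(\rho u_i u_j)$. The first piece commutes out of the spatial integral and yields $\partial_t\Delta^{-1}\mathrm{div}(\rho u)$, where by virtue of $u\cdot n|_{\partial\Omega}=0$ one has the representation $\Delta^{-1}\mathrm{div}(\rho u)(x) = -\int \partial_{y_j} N(x,y)\rho u_j\,dy$. For the convective piece a further integration by parts in $y_i$ is performed; its boundary term vanishes since $u_i n_i = 0$, leaving $\int \partial_{y_i}\partial_{y_j} N(x,y)\,\rho u_i u_j\,dy$. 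Combining,
\[
F(x) = \partial_t\Delta^{-1}\mathrm{div}(\rho u) + \int_\Omega \partial_{y_i}\partial_{y_j} N(x,y)\,\rho u_i u_j(y)\,dy + R(x).
\]

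The last step is to convert the Eulerian time derivative into the material derivative $d/dt = \partial_t + u\cdot\nabla$ defined in \eqref{ou1r}, and to recognize the remaining terms as a commutator plus the symmetric correction $B(x)$. Using $\partial_{x_i}\Delta^{-1}\mathrm{div}(\rho u)(x) = -\int \partial_{x_i}\partial_{y_j} N(x,y)\rho u_j\,dy$, one verifies directly that
\[
[u_i,\partial_i\Delta^{-1}\partial_j](\rho u_j)(x) = -\int_\Omega \partial_{x_i}\partial_{y_j} N(x,y)\bigl(u_i(x)-u_i(y)\bigr)\rho u_j(y)\,dy,
\]
where the inner divergence integration uses $u\cdot n|_{\partial\Omega}=0$ once more to kill the boundary term. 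Adding and subtracting $u_i(x)\int \partial_{x_i}\partial_{y_j} N\,\rho u_j$ together with $\int \partial_{x_i}\partial_{y_j} N\,\rho u_i u_j$ then yields the identity
\[
-u_i(x)\,\partial_{x_i}\Delta^{-1}\mathrm{div}(\rho u) + \int \partial_{y_i}\partial_{y_j} N\,\rho u_i u_j\,dy = -[u_i,\partial_i\Delta^{-1}\partial_j](\rho u_j) + B(x),
\]
with $B(x)$ the symmetric combination $\int(\partial_{x_i}\partial_{y_j}+\partial_{y_i}\partial_{y_j})N\,\rho u_i u_j\,dy$ specified in \eqref{bd}. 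This delivers \eqref{qp11}.

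The main obstacle I anticipate is bookkeeping rather than any analytic difficulty. Three successive integrations by parts—in the initial Green representation, in $y_i$ for the convective term, and inside the commutator identity—each generate boundary integrals, and one must verify that every such term is either absorbed into $R(x)$ through the structure of $\partial_n F$ in \eqref{C501} or is forced to vanish by the pair of orthogonality conditions $u\cdot n|_{\partial\Omega}=0$ and $\partial_{n_y} N = l^{-1}$ (the latter being the reason the constant $l^{-1}\int_{\partial\Omega} F\,dS$ is the only global mean-value correction that survives). Once this ledger is maintained, the conversion $\partial_t = d/dt - u\cdot\nabla$ and the commutator identification are purely algebraic.
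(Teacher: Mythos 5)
Your argument reproduces the paper's proof step by step: Green's second identity with the Neumann Green's function $N$, cancellation of the $N\rho\dot u\cdot n$ boundary contributions from the interior and boundary data in \eqref{C501}, substitution of $\rho\dot u_j=\partial_t(\rho u_j)+\partial_{y_i}(\rho u_iu_j)$ from the continuity equation, and the final bookkeeping that trades $\partial_t$ for $\tfrac{d}{dt}-u\cdot\nabla_x$ and regroups the resulting integrals into the inner commutator plus the symmetric remainder $B(x)$. The justifications you give for the vanishing boundary terms ($u\cdot n|_{\partial\Omega}=0$ after each integration by parts in $y$, and $\partial_{n_y}N=l^{-1}$ producing the constant surviving in $R(x)$) are exactly those used in the paper; the proposal is correct and essentially identical in approach.
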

\begin{remark}
Let us compare \eqref{CA445} with \eqref{qp11}, then it is quite clear that the obstacles due to the boundary $\partial\Omega$ are given by the extra terms $R(x)$ and $B(x)$.
\end{remark}
\begin{proof}
In view of \eqref{C501}, the standard elliptic theory (see \cite[Chapter 2]{GT}) implies that 
\begin{equation*} 
\begin{split}
F(x)&=\int N(x, y)\cdot\mathrm{div}(\rho\dot{u})(y)dy+\int_{\partial\Omega}\frac{\partial N}{\partial n}(x, y)\cdot F(y)\,dS_y\\
&\quad-\int_{\partial\Omega}N(x, y)\cdot
\left(\rho\dot{u}\cdot n+n^\bot\cdot\nabla(Ku\cdot n^\bot)\right)dS_y,
\end{split}
\end{equation*}
which along with integrating by parts and \eqref{CA438} leads to
\begin{equation}\label{C512}
\begin{split}
F(x) 
&=-\int_\Omega\partial_{y_j}{N}(x, y)\cdot \rho\dot{u}_j(y)\,dy+l^{-1}\int_{\partial\Omega} F(y)\,dS_y\\
&\quad-\mu\int_{\partial\Omega}{N}(x, y)\left(n^\bot\cdot\nabla(Ku\cdot n^\bot)\right)dS_y\\
&=-\int_\Omega\partial_{y_j} N(x, y)\cdot \rho\dot{u}_j(y)\,dy+R(x),
\end{split}
\end{equation}
where $R(x)$ is the lower order remaining term given in \eqref{bd}.
Then by taking advantage of \eqref{11}$_1$ and the boundary condition $u\cdot {n}\big|_{\partial  \Omega}=0$, we infer that
\begin{equation}\label{369}
\begin{split}
&-\int_\Omega\partial_{y_j} N(x, y)\cdot \rho\dot{u}_j(y)\,dy\\
&=-\int_\Omega\partial_{y_j} N(x, y)\cdot\big(\partial_t(\rho u_j)+\mathrm{div}(\rho u\otimes u_j)\big)dy\\
&=-\frac{\partial}{\partial t}\int_\Omega\partial_{y_j} N(x, y)\cdot \rho u_j (y)\,dy+\int_\Omega\partial_{y_i}\partial_{y_j} N(x, y)\cdot \rho u_iu_j (y)\,dy \\
&=-\frac{\mathrm{d}}{\mathrm{d}t}\int  \partial_{y_j}N(x, y)\cdot \rho u_j(y)\,dy+J(x)=\frac{\mathrm{d}}{\mathrm{d}t}\Delta^{-1}\mathrm{div}(\rho u)+J(x),
\end{split}
\end{equation}
where $J(x)$ satisfies that
\begin{equation}\label{CA446}
\begin{split}
J(x)
&=\int_\Omega u_i(x)\cdot\partial_{x_i}\partial_{y_j} N(x, y)\cdot \rho u_j (y)\,dy +\int_\Omega\partial_{y_i}\partial_{y_j} N(x, y)\cdot \rho u_iu_j (y)\,dy\\
&=\int_\Omega\partial_{x_i}\partial_{y_j} N(x, y)\cdot\big(u_i(x)-u_i(y)\big)\cdot \rho u_j (y)\,dy
+B(x)\\
&=-\left[u_i,\,\partial_i\,\Delta^{-1}\,\partial_j\right](\rho u_j)+B(x).
\end{split}
\end{equation} 
Substituting \eqref{369} and \eqref{CA446} into \eqref{C512} gives \eqref{qp11} and finishes the proof.
\end{proof}

Now, let us focus on \eqref{qp11}. The most difficult term is the boundary part $B(x)$, since in view of \eqref{CA450}, it holds that
$$|B(x)|\leq \int_\Omega|\nabla^2N(x,y)|\cdot\rho|u|^2(y)\,dy\leq C\int_\Omega\frac{\rho|u|^2}{|x-y|^2}\,dy.$$
Such estimate yields that the singularity of $B(x)$ is of order 2 and out of control in 2D. What saves us is that $B(x)$ falls into the scope of the cancellation structure developed in Subsection \ref{SS42}, which cuts down the singularity of $B(x)$ by order 1.

The next lemma makes detailed analysis on the inner part $\left[u_i,\,\partial_i\,\Delta^{-1}\,\partial_j\right](\rho u_j)$ and the boundary part $B(x)$.  
 
\begin{lemma}\label{qp08} 
Suppose that $\left[u_i,\,\partial_i\,\Delta^{-1}\,\partial_j\right](\rho u_j)$ and $B(x)$ are given by Lemma \ref{LL49}. In addition, let $\varphi:\overline{\Omega}\rightarrow\overline{\mathcal{C}}$ be a conformal mapping from $\Omega$ to some circular domain given by Lemma \ref{L27}, then there is a constant $C$ depending only on $\Omega$ and $\varphi$, such that the inner part is bounded by,
\begin{equation}\label{CA447}
\begin{split}
&\big|\left[u_i,\,\partial_i\,\Delta^{-1}\,\partial_j\right]\big(\rho u_j\big)(x)\big|
\leq C\int_\Omega\frac{|u(y)-u(x)|}{|y-x|^2}
\,\rho|u|(y)\,dy.\\
\end{split}
\end{equation} 
While the boundary part is controlled by
\begin{equation}\label{CA448}
\begin{split}
&|B(x)|
\leq C\,\sum_{j=0}^{k-1}\left(\int_\Omega\frac{|u(y)-u(x_j)|}{|y-x_j|^2}\,\rho|u| (y)\,dy+ \int_\Omega\frac{\rho|u|^2(y)}{|y-x_j|}\,dy\right).
\\
\end{split}
\end{equation} 
where $x_j\in\Gamma_j$
is the projection of $x$ to $\Gamma_j$ induced by $\varphi$ as in Lemma \ref{LL48}. Without loss of generality, we may assume $0$ is not in $\mathcal{C}$ to ensure $x_0$ is well defined.
\end{lemma}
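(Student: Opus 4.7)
The first bound \eqref{CA447} for the interior commutator follows at once from the explicit representation supplied by Lemma \ref{LL49},
\[
\left[u_i,\,\partial_i\Delta^{-1}\partial_j\right](\rho u_j)(x) = -\int_\Omega \partial_{x_i}\partial_{y_j}N(x,y)\,\bigl(u_i(x)-u_i(y)\bigr)\rho u_j(y)\,dy,
\]
combined with the pointwise estimate $|\nabla^2 N(x,y)|\le C|y-x|^{-2}$ in \eqref{CA450}. The real content of the lemma lies in \eqref{CA448}. The naive bound $|(\partial_{x_i}+\partial_{y_i})\partial_{y_j}N|\le 2|\nabla^2 N|\le C|y-x|^{-2}$ is insufficient, since after multiplication by $\rho|u|^2$ this would produce a non-integrable singularity; the cancellation structure \eqref{CA443} must be invoked.

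The strategy for $B(x)$ splits by the location of $x$. If $\mathrm{dist}(x,\partial\Omega)>\mathbf{d}/4$, the estimate \eqref{CA451} gives a uniformly bounded kernel and $|B(x)|\le C\int_\Omega\rho|u|^2\,dy$, which is absorbed into the right-hand side of \eqref{CA448} using $|y-x_j|\le\mathrm{diam}(\Omega)$. When $\mathrm{dist}(x,\Gamma_{j_0})\le\mathbf{d}/4$ for some $j_0$, I would anchor the analysis at the boundary projection $x_{j_0}\in\Gamma_{j_0}$ and exploit the boundary condition $u\cdot n|_{\partial\Omega}=0$: since $x_{j_0}\in\partial\Omega$, the vector $u(x_{j_0})$ is purely tangential, $u(x_{j_0})=u_\tau(x_{j_0})\tau$, with $\tau$ the unit tangent to $\Gamma_{j_0}$ at $x_{j_0}$. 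Writing $u_i(y)=u_\tau(x_{j_0})\tau_i+(u_i(y)-u_i(x_{j_0}))$ splits $B(x)$ into two pieces. The first piece produces the factor $\tau_i(\partial_{x_i}+\partial_{y_i})\partial_{y_j}N$, whose magnitude is controlled by $C|y-x_{j_0}|^{-1}$ via \eqref{CA443}; the second piece is controlled by $|(\partial_{x_i}+\partial_{y_i})\partial_{y_j}N|\le C|y-x_{j_0}|^{-2}$, noting that $(\partial_{x_i}+\partial_{y_i})\Gamma\equiv 0$ so only the regular part $H$ contributes, bounded as in \eqref{CA452}. A triangle-inequality step $|u(x_{j_0})|\le|u(y)|+|u(y)-u(x_{j_0})|$ then converts the factor $|u(x_{j_0})|$ from the first piece into the desired $\int\rho|u|^2/|y-x_{j_0}|$ term plus a remainder which, via $|y-x_{j_0}|^{-1}\le\mathrm{diam}(\Omega)|y-x_{j_0}|^{-2}$, is absorbed into the $\int\rho|u||u(y)-u(x_{j_0})|/|y-x_{j_0}|^2$ term. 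Summing over $j_0\in\{0,\ldots,k-1\}$ yields \eqref{CA448}.

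The main obstacle is pairing the correct tangential structures: the cancellation \eqref{CA443} demands that the factor contracted with $(\partial_{x_i}+\partial_{y_i})$ be tangential at $x_{j_0}$, and it is only the boundary condition $u\cdot n|_{\partial\Omega}=0$ that forces $u(x_{j_0})$ itself to supply such a tangential factor. Without this matching, the normal-normal contribution would produce a term of the form $(u\cdot n)^2|y-x_{j_0}|^{-2}$ which, even after using $|u\cdot n(y)|\le|u(y)-u(x_{j_0})|$, yields a $|u(y)-u(x_{j_0})|^2|y-x_{j_0}|^{-2}$ quantity that is not directly dominated by either term on the right-hand side of \eqref{CA448}. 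The decomposition above sidesteps this issue by extracting only one factor of $u(x_{j_0})$, so that the cancellation acts on a product with a single rather than squared tangential piece.
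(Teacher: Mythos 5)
Your proposal is correct and follows essentially the same path as the paper: split off the constant $u_i(x_{j_0})$ from $u_i(y)$, use the Navier-slip condition to make $u(x_{j_0})$ purely tangential so that the cancellation estimate \eqref{CA443} applies, and control the difference $u_i(y)-u_i(x_{j_0})$ against $|\nabla^2 H|\le C|y-x_{j_0}|^{-2}$ from \eqref{CA452}. The only variation is in how you handle the piece carrying $u_i(x_{j_0})$: you apply \eqref{CA443} immediately, landing on $|u(x_{j_0})|\int\rho|u|/|y-x_{j_0}|\,dy$, and then use the pointwise triangle inequality $|u(x_{j_0})|\le|u(y)|+|u(y)-u(x_{j_0})|$ under the integral (absorbing the remainder via $|y-x_{j_0}|^{-1}\le\operatorname{diam}(\Omega)|y-x_{j_0}|^{-2}$). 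The paper instead rewrites $u(x_s)\cdot\tau=(u(x_s)-u(y))\cdot\tau+u(y)\cdot\tau$ \emph{before} choosing which kernel estimate to apply (pairing the difference with \eqref{CA452} and the $u(y)\cdot\tau$ part with \eqref{CA443}). The two routes give the same bound up to constants depending on $\Omega$, so this is a cosmetic difference.

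One slip in your commentary paragraph: the threatened ``normal-normal'' term of order $|u(y)-u(x_{j_0})|^2|y-x_{j_0}|^{-2}$ does not actually arise. Only the index $i$ is contracted with $(\partial_{x_i}+\partial_{y_i})$, so at most one factor of $u$ needs decomposing into tangential and normal parts at $x_{j_0}$; the other factor $\rho u_j(y)$ is simply absolute-valued to $\rho|u|(y)$. In fact the alternative decomposition $u_i(y)=(u(y)\cdot\tau)\tau_i+(u(y)\cdot n)n_i$ (with $\tau,n$ fixed at $x_{j_0}$) also closes: the normal piece gives $|u(y)\cdot n|\,|\nabla^2 H|\,\rho|u|\le|u(y)-u(x_{j_0})|\,|y-x_{j_0}|^{-2}\rho|u|$ using $u(x_{j_0})\cdot n=0$, which is exactly the first integral in \eqref{CA448}. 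So the boundary condition is indeed essential, but not quite for the reason you state.
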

\begin{proof}
According to \eqref{CA450}, we directly deduce that
\begin{equation*}
\begin{split}
\big|\left[u_i,\,\partial_i\,\Delta^{-1}\,\partial_j\right]\big(\rho u_j\big)(x)\big|
&=\left|\int_\Omega\partial_{x_i}\partial_{y_j} N(x, y)\cdot\big(u_i(x)-u_i(y)\big)\cdot \rho u_j (y)\,dy\right|\\
&\leq C\int_\Omega\frac{|u(x)-u(y)|}{|x-y|^2}\cdot\rho|u|(y)\,dy,
\end{split}
\end{equation*}
which provides \eqref{CA447}. 
Next we make use of Lemma \ref{LL48} to handle $R(x)$.  

If $x$ is away from the boundary and $\mathrm{dist}(x,\partial\Omega)>\mathbf{d}/4$, then we take advantage of \eqref{CA420} and \eqref{CA451} to declare that
\begin{equation}\label{CA453}
\begin{split}
|B(x)|&=\left|\int_\Omega\left(\partial_{x_i}\partial_{y_j}+\partial_{y_i}\partial_{y_j}\right) H(x, y)\cdot \rho u_i u_j (y)\,dy\right|\\
&\leq\int_\Omega\left|\left(\partial_{x_i}\partial_{y_j}+\partial_{y_i}\partial_{y_j}\right) H(x, y)\right|\cdot \rho |u|^2 (y)\,dy\\
&\leq C\int_\Omega\rho|u|^2(y)\,dy.
\end{split}
\end{equation}

If $x$ is close to $\partial\Omega$ and $\mathrm{dist}(x,\Gamma_s)\leq\mathbf{d}/4$ for some $s\in\{0,1,\cdots,k-1\}$, then we have
\begin{equation*}
\begin{split}
B(x)&=\int_\Omega\left(\partial_{x_i}\partial_{y_j}+\partial_{y_i}\partial_{y_j}\right) N(x, y)\cdot \rho u_i u_j (y)\,dy\\
&=\int_\Omega\left(\partial_{x_i}\partial_{y_j}+\partial_{y_i}\partial_{y_j}\right) N(x,y)\cdot\big(u_i(y)-u_i(x_s)\big)\cdot \rho u_j (y)\,dy\\
&\quad+\int_\Omega\left(\partial_{x_i}\partial_{y_j}+\partial_{y_i}\partial_{y_j}\right) N(x, y)\cdot u_i(x_s)\cdot \rho u_j (y)\,dy\\
&\triangleq B_{1}(x)+B_{2}(x),
\end{split}
\end{equation*}
where $x_s$ is the projection to $\Gamma_s$ defined in Lemma \ref{LL48}.
According to \eqref{CA420} and \eqref{CA452}, we declare that
\begin{equation}\label{CA454}
\begin{split}
|B_1(x)|&=\left|\int_\Omega\left(\partial_{x_i}\partial_{y_j}+\partial_{y_i}\partial_{y_j}\right) H(x,y)\cdot\big(u_i(y)-u_i(x_s)\big)\cdot \rho u_j (y)\,dy|\right|\\
&\leq\int_\Omega\big|(\partial_{x_i}\partial_{y_j}+\partial_{y_i}\partial_{y_j}) H(x,y)\big|\cdot\big|u_i(y)-u_i(x_s)\big|\cdot \rho|u|(y)\,dy\\
&\leq C\int\frac{|u(y)-u(x_s)|}{|y-x_s|^2}\cdot \rho |u| (y)\,dy.
\end{split}
\end{equation}
Moreover, the slip boundary conditions \eqref{15} ensures that $u(x_s)$ is parallel to the unit   tangential vector $\tau=(\tau_1,\tau_2)$ of $\Gamma_s$ at $x_s$, thus we obtain    $u(x_s)=\big(u(x_s)\cdot\tau\big)\,\tau$, which together with \eqref{CA452} and \eqref{CA443} leads to
\begin{equation}\label{CA455}
\begin{split}
|B_2(x)|&=\left|\int_\Omega \left(\partial_{x_i}\partial_{y_j}+\partial_{y_i}\partial_{y_j}\right) H(x, y)\cdot u_i(x_s)\cdot \rho u_j (y)\,dy\right|\\
&=\left|\int_\Omega \left(\partial_{x_i}\partial_{y_j}+\partial_{y_i}\partial_{y_j}\right) H(x, y)\cdot\big(u(x_s)\cdot\tau\big)\,\tau_i\cdot \rho u_j (y)\,dy\right|\\
&=\left|\int_\Omega \left(\partial_{x_i}\partial_{y_j}+\partial_{y_i}\partial_{y_j}\right) H(x, y)\cdot\big(\big(u(x_s)-u(y)\big)\cdot\tau\big)\tau_i\cdot \rho u_j (y)\,dy\right|\\
&\quad+\left|\int_\Omega \tau_i\left(\partial_{x_i}\partial_{y_j}+\partial_{y_i}\partial_{y_j}\right) H(x, y)\cdot(u(y)\cdot\tau)\cdot \rho u_j(y)\,dy\right|\\
&\leq C\int_\Omega|\nabla^2 H(x,y)|\cdot\big|u(y)-u(x_s)\big|\cdot \rho |u|(y)\,dy\\
&\quad+C\int_\Omega\big|\tau_i\left(\partial_{x_i}\partial_{y_j}+\partial_{y_i}\partial_{y_j}\right) N(x, y)\big|\cdot\rho|u|^2(y)\,dy\\
&\leq C\int_\Omega\frac{|u(y)-u(x_s)|}{|y-x_s|^2}\cdot\rho |u|(y)\,dy+C\int_\Omega\frac{\rho|u|^2(y)}{|y-x_s|}dy,
\end{split}
\end{equation}
where we have applied \eqref{CA420} in the last but two line.
Combining \eqref{CA453}, \eqref{CA454}, and \eqref{CA455} implies that 
\begin{equation*} 
|B(x)|\leq C\,\sum_{j=0}^{k-1}\left(\int_\Omega\frac{|u(y)-u(x_j)|}{|y-x_j|^2} \rho |u|(y)\,dy+\int_\Omega\frac{\rho|u|^2 (y)}{|y-x_j|}dy\,\right),
\end{equation*}
which is \eqref{CA448} and finishes the proof.
\end{proof}

\begin{remark} 
Let us make a comparison between commutator arguments on the bounded domain $\Omega$ and the global domain $\mathbb{R}^2$.
Note that the commutator 
$[u,R_{ij}](\rho u)$ in $\mathbb{R}^2$ can be written as a  singular integral: 
\begin{equation}\label{CA458}
[u,R_{ij}](\rho u)(x)\leq C\int_{\mathbb{R}^2}\frac{|u(y)-u(x)|}{|y-x|^2}\cdot\rho |u|(y)\,dy,~~R_{ij}=\partial_i\Delta^{-1}\partial_j.
\end{equation} 
In contrast, the commutator on the bounded domain $\Omega$ consists of two parts. The first part given by $\left[u_i,\,\partial_i\,\Delta^{-1}\,\partial_j\right](\rho u_j)$ corresponds to the inner domain, since
\begin{equation}\label{CA456}
\begin{split}
&\left[u_i,\,\partial_i\,\Delta^{-1}\,\partial_j\right](\rho u_j)(x)\leq C\int_{\Omega}\frac{|u(y)-u(x)|}{|y-x|^2}\cdot\rho |u|(y)\,dy,~~\mathrm{with}\ x\in\Omega.\\
\end{split}
\end{equation}
While the second part given by $B(x)$ corresponds to the boundary, since
\begin{equation}\label{CA457}
\begin{split}
&B(x)\leq C\sum_{j=0}^{k-1}\int_{\Omega}\frac{|u(y)-u(x_j)|}{|y-x_j|^2}\cdot\rho |u|(y)\,dy+\mbox{Remainders},~~\mathrm{with}\ x_j\in\partial\Omega.
\end{split}
\end{equation}
Clearly, the integrals given by \eqref{CA458}--\eqref{CA457} are quite similar and enjoy plenty of common properties. Let us call the left hand side of \eqref{CA456} the inner commutator and each summation term on the right hand side of \eqref{CA457} the boundary commutator.

Let us make a detailed discussion on  \eqref{CA456} and \eqref{CA457}. We mention that the inner commutator in \eqref{CA456} is   analogous to commutators in $\mathbb{R}^2$, since they are generated in the same  process, and $\Delta^{-1}$ all refer to the convolution with the fundamental solution.
Meanwhile, \eqref{CA457} measures the error between commutators on $\Omega$ and $\mathbb{R}^2$  due to the boundary $\partial\Omega$, precisely, each component of $\partial\Omega$ gives rise to an additional boundary commutator in \eqref{CA457} . In summary, the structure of commutators on $\Omega$ is rather clear: The inner part is formulated by the same method as in $\mathbb{R}^2$, and the
existence of boundary commutator reflects the differences between $\Omega$ and $\mathbb{R}^2$, while the number of boundary commutators describes the multi-connectedness of $\Omega$.

\end{remark}

\section{A priori estimates}\label{S5}
\quad In this section, we utilize the arguments in Section \ref{S3} and \ref{S4} to develop a priori estimates of the system \eqref{11}--\eqref{15}. Supposing that $\Omega$ is a $k$-connected bounded smooth domain satisfying \eqref{C101}, and the conditions in Theorem \ref{T1} always holds, we first quote the local well-posedness assertion (see \cite{weko_39341_1,Solonnikov1980}).
\begin{lemma} \label{u21}
Under the same conditions of Theorem \ref{T1}, we assume that the initial values are given by
\begin{equation*}
\rho_0\in H^2 ,\quad \inf_{x\in\Omega}\rho_0(x)>0,\quad u_0\in \tilde{H}^1 \cap H^2 ,\quad m_0=\rho_0u_0,   
\end{equation*}
then there exist a small time $T>0$ and a positive constant $C_0$ both depending on $ \Omega$, $\mu$, $\beta$, $\gamma$, $\|\rho_0\|_{H^2}$,  $\|u_0\|_{H^2},$ and $\|\rho_0^{-1}\|_{L^\infty}$, such that the system \eqref{11}--\eqref{15} has a unique strong solution $(\rho,  u)$ in $\Omega\times(0, T)$ satisfying
\begin{equation*} 
\begin{cases}
\rho\in C\big([0, T]; H^2 \big),~~ \rho_t\in C\big([0, T];H^1 \big), \\
u\in L^2\big(0, T;H^3 \big),~~u_t\in L^2\big(0, T;H^2 \big)\cap H^1\big(0,T;L^2 \big).
\end{cases}
\end{equation*}
\end{lemma}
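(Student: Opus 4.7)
The plan is to establish local existence and uniqueness via an iterative linearization followed by a Banach fixed-point (contraction) argument on a short time interval. Setting $(\rho^0, u^0)(x,t) \equiv (\rho_0(x), u_0(x))$, I would construct $(\rho^{n+1}, u^{n+1})$ from $(\rho^n, u^n)$ in two substeps. First, given $u^n$, solve the linear transport equation
\begin{equation*}
\partial_t \rho^{n+1} + \mathrm{div}(\rho^{n+1} u^n) = 0, \quad \rho^{n+1}(\cdot, 0) = \rho_0,
\end{equation*}
by the method of characteristics. Since $u^n \cdot n = 0$ on $\partial\Omega$, the flow of $u^n$ preserves $\overline{\Omega}$, and the representation $\rho^{n+1}(X^n(t,x)) = \rho_0(x)\exp\bigl(-\int_0^t \mathrm{div}\,u^n(X^n(s,x),s)\,ds\bigr)$ transports both the $H^2$ regularity and the positive lower bound $\inf \rho_0$, provided $\int_0^T \|\nabla u^n\|_{L^\infty}\,dt$ stays small.

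Second, given $\rho^{n+1}$, solve the linearized momentum system
\begin{equation*}
\rho^{n+1}\partial_t u^{n+1} + \rho^{n+1}(u^n\cdot\nabla)u^{n+1} - \mu\Delta u^{n+1} - \nabla\bigl((\mu+\lambda(\rho^{n+1}))\mathrm{div}\,u^{n+1}\bigr) = -\nabla P(\rho^{n+1}),
\end{equation*}
with $u^{n+1}\cdot n = 0$ and $\mathrm{curl}\,u^{n+1} = -K u^{n+1}\cdot n^\bot$ on $\partial\Omega$, and $u^{n+1}(\cdot,0) = u_0$. This is a linear parabolic system with smooth variable coefficients of Lamé type; its well-posedness in the parabolic class $L^2(0,T;H^3)\cap H^1(0,T;H^1)$ follows from Solonnikov's theory of parabolic systems, once the Lopatinskii--Shapiro condition is verified for the Navier-slip pair $(u\cdot n,\,\mathrm{curl}\,u + K u\cdot n^\bot)$, whose principal symbol agrees with that of the Stokes system. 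Testing with $u^{n+1}_t$ and differentiating, together with the div-curl estimate \eqref{gg1} (valid because $K\not\equiv 0$), delivers the uniform bounds in the required class.

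Next I would close the scheme by choosing $T>0$ small enough that all iterates lie in a fixed ball: the positivity and $H^2$ control of $\rho^{n+1}$ and the parabolic norms of $u^{n+1}$ are propagated uniformly, the key input being smallness of $\int_0^T\|\nabla u^n\|_{L^\infty}\,dt$ via the two-dimensional embedding $H^2\hookrightarrow W^{1,\infty}$. Convergence is obtained by estimating differences $(\delta\rho^n,\delta u^n)=(\rho^{n+1}-\rho^n,\,u^{n+1}-u^n)$ in the weaker space $L^\infty(0,T;L^2)\times L^2(0,T;H^1)$: subtracting successive transport equations controls $\|\delta\rho^{n+1}\|_{L^2}$ by $T^{1/2}$ times the previous differences, and subtracting successive momentum equations yields a contraction for $T$ sufficiently small. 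Passing to the limit and invoking weak-star lower semicontinuity produces a solution with the regularity stated in the lemma; uniqueness follows from the same contraction applied to two candidate solutions.

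The main obstacle is the density-dependent viscosity $\lambda(\rho)=\rho^\beta$: the top-order operator $-\mu\Delta u - \nabla((\mu+\rho^\beta)\mathrm{div}\,u)$ has coefficients only as regular as $\rho$ itself, and obtaining $H^3$ estimates for $u$ requires commuting spatial derivatives with $\rho^\beta$ and then reabsorbing terms through Sobolev multipliers in two dimensions. This is precisely where the hypothesis $\rho_0\in H^2$ and the preservation of the strict positive lower bound $\inf\rho_0>0$ become indispensable; without the lower bound, the coefficient $\mu+\rho^\beta$ could approach $\mu$ in a non-uniform manner and the Navier-slip boundary estimates for the Lamé operator would lose their uniform ellipticity, breaking the iteration.
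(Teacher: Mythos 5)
The paper does not actually prove this lemma; it is quoted directly from the literature (Salvi--Str\v{a}skraba \cite{weko_39341_1} and Solonnikov \cite{Solonnikov1980}), so there is no in-paper argument to compare against. Your sketch --- linearize the transport and momentum equations, solve the transport step along characteristics, solve the Lam\'{e}-type parabolic step with Navier-slip data, close a fixed-point in a low-norm space, and then recover the higher regularity --- is indeed the standard route followed by those references, and the difficulties you single out (the density-dependent $\lambda(\rho)=\rho^\beta$ with only $H^2$ coefficient regularity, and the essential role of $\inf\rho_0>0$ in keeping the operator uniformly elliptic) are the right ones.

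Two points deserve more care before the sketch could be called complete. First, the parabolic class you propagate for $u^{n+1}$, namely $L^2(0,T;H^3)\cap H^1(0,T;H^1)$, only gives $u_t\in L^2(0,T;H^1)$, whereas the lemma asserts $u_t\in L^2(0,T;H^2)\cap H^1(0,T;L^2)$. To reach that class one must differentiate the linearized momentum equation in time and apply parabolic estimates to $v=u_t$, which in turn requires the compatibility datum $u_t(\cdot,0)$ to be controlled; this is precisely where the positivity $\inf\rho_0>0$ and $\rho_0, u_0\in H^2$ enter again through the formula $u_t(\cdot,0)=\rho_0^{-1}\bigl(\mu\Delta u_0+\nabla((\mu+\lambda(\rho_0))\mathrm{div}\,u_0)-\nabla P(\rho_0)\bigr)-u_0\cdot\nabla u_0$. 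This extra step should be made explicit, not absorbed into the fixed-point iteration. Second, you invoke the Lopatinskii--Shapiro condition for the Navier-slip pair $(u\cdot n,\,\mathrm{curl}\,u+Ku\cdot n^\bot)$ by pointing to the Stokes analogy; since the operator here is the compressible Lam\'{e} operator $-\mu\Delta-\nabla((\mu+\lambda)\mathrm{div})$ rather than Stokes, and since $K$ enters the boundary operator at lower order, one must check the complementing condition for this pair directly (or reduce it to the $K\equiv 0$ case by a perturbation argument), rather than cite the Stokes case. Neither point is a fatal error, but both are needed to make the sketch rigorous at the level the lemma asserts.
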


Let $(\rho, u)$ be the strong solution given above, then we follow the outline in  \cite{FLL,huang2016existence} and extend the local solution to the global one.
The effective viscous flux $F$ and the vorticity $\omega$ are defined in the usual manners:
\begin{equation}\label{303}
 F\triangleq\big(2\mu+\lambda(\rho)\big)\mathrm{div}u-(P-\bar{P}),~~
\omega\triangleq\partial_2u_1-\partial_1u_2.
\end{equation}
Moreover, we introduce the notations
\begin{equation}\label{a}
\begin{split}
A^2(t)\triangleq \int_\Omega \big( \lambda(\rho )(\mathrm{div}u  )^2&+|\nabla u|^2+(\rho+1)^{\gamma-1}(\rho-1)^2 \big)dx+\int_{\partial \Omega}K|u|^2\,dS,\\
&\ B^2(t)\triangleq\int_\Omega\rho|\dot{u}|^2(x,t)\,dx,\\
&R_T\triangleq 1+\sup_{0\leq t\leq T}\|\rho(\cdot,t)\|_{L^\infty }.
\end{split}
\end{equation}

\subsection{A priori estimates (I): uniform estimates}
\quad The goal of this subsection is to establish the uniform upper bound of $\rho$ \eqref{md} and the exponential decay \eqref{zs}, which describe the large time behaviour of the solution $(\rho,u)$. 
To begin with, we first set up the basic energy estimates.
\begin{lemma}\label{3111}
There exists a positive constant $C$ depending only on $  \gamma$, $\mu$, $\rho_0$,  $u_0$, and $\Omega$, such that
\begin{equation}
\begin{split}
&\sup_{0\leq t\leq T}\int_\Omega(\rho|u|^2+\rho^\gamma)dx+\int_0^T\int_\Omega \left((2\mu+\lambda)(\mathrm{div}u  )^2+|\nabla u|^2 \right)dxdt\\
&+\int_0^T\int_{\partial \Omega}K|u|^2\,dSdt\leq C.  \label{300}
\end{split}
\end{equation}
\end{lemma}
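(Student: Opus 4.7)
The plan is to obtain the classical energy identity by testing the momentum equation in \eqref{11} with $u$, and then upgrade the resulting bounds to the $\|\nabla u\|_{L^2}$ control in \eqref{300} by invoking the div-curl estimate \eqref{gg1} of Lemma \ref{le3.4}, which is available because under the hypotheses of Theorem \ref{T1}, $K\not\equiv 0$ on $\partial\Omega$.

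For the energy identity, the kinetic term combined with the continuity equation gives $\frac{d}{dt}\int_\Omega\tfrac{1}{2}\rho|u|^2\,dx$ in the usual way; the pressure term rewrites as $\frac{d}{dt}\int_\Omega\frac{\rho^\gamma}{\gamma-1}\,dx$ after multiplying the mass equation by $\tfrac{\gamma}{\gamma-1}\rho^{\gamma-1}$, integrating, and using $u\cdot n|_{\partial\Omega}=0$. The only step requiring care is the viscous side. Writing $\Delta u=\nabla\mathrm{div}u-\nabla^{\bot}\mathrm{curl}u$, integration by parts with $u\cdot n=0$ disposes cleanly of the $\nabla\mathrm{div}u$ piece and of $\nabla((\mu+\lambda)\mathrm{div}u)$, producing $-\int_\Omega(2\mu+\lambda)(\mathrm{div}u)^2\,dx$ without any boundary residue. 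The $\nabla^{\bot}\mathrm{curl}u$ piece, however, leaves behind the boundary integral $\mu\int_{\partial\Omega}\mathrm{curl}u\,(u\cdot n^{\bot})\,dS$; inserting the Navier-slip condition $\mathrm{curl}u=-Ku\cdot n^{\bot}$ from \eqref{15} and using $|u|^2=(u\cdot n^{\bot})^2$ on $\partial\Omega$ converts this into $-\mu\int_{\partial\Omega}K|u|^2\,dS$. Collecting everything yields
\begin{equation}\notag
\frac{d}{dt}\!\int_\Omega\!\Big(\tfrac{1}{2}\rho|u|^2+\tfrac{\rho^\gamma}{\gamma-1}\Big)dx+\int_\Omega\!\big((2\mu+\lambda)(\mathrm{div}u)^2+\mu(\mathrm{curl}u)^2\big)dx+\mu\!\int_{\partial\Omega}\!K|u|^2\,dS=0.
\end{equation}

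Integrating in time and using the initial hypotheses \eqref{18} together with mass conservation controls $\sup_{t\le T}\int_\Omega(\rho|u|^2+\rho^\gamma)\,dx$ and the time integrals of $\int_\Omega(2\mu+\lambda)(\mathrm{div}u)^2$, $\int_\Omega(\mathrm{curl}u)^2$, and $\int_{\partial\Omega}K|u|^2$. Since $\lambda=\rho^\beta\geq 0$, the bound $\|\mathrm{div}u\|_{L^2}^2\leq(2\mu)^{-1}\int_\Omega(2\mu+\lambda)(\mathrm{div}u)^2\,dx$ holds, so applying \eqref{gg1} with $p=2$ yields
\begin{equation}\notag
\|\nabla u\|_{L^2}^2\leq C\Big(\|\mathrm{div}u\|_{L^2}^2+\|\mathrm{curl}u\|_{L^2}^2+\int_{\partial\Omega}K|u|^2\,dS\Big),
\end{equation}
whose time integral is already controlled by what precedes, which closes \eqref{300}.

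The main obstacle --- modest compared with what the rest of the paper handles --- is the viscous boundary computation: on a bounded domain with the Navier-slip conditions, $\int_\Omega\Delta u\cdot u\,dx$ does not simply equal $-\int_\Omega|\nabla u|^2\,dx$, so the delicate algebraic point is matching the residual curl-boundary integral with the slip condition to produce the dissipative term $-\mu K|u|^2$. The sign assumption $K\geq 0$ is exactly what makes this identity dissipative, while the stronger $K\not\equiv 0$ enters only at the last step in invoking \eqref{gg1} to upgrade the control to $\|\nabla u\|_{L^2}$.
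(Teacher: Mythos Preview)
Your proposal is correct and follows essentially the same route as the paper: derive the energy identity by testing \eqref{11}$_2$ with $u$ (using $\Delta u=\nabla\mathrm{div}u+\nabla^\bot\omega$ and the slip condition to produce $(2\mu+\lambda)(\mathrm{div}u)^2+\mu\omega^2$ in the interior and $-\mu K|u|^2$ on the boundary), integrate in time, and then invoke a div-curl estimate to recover $\|\nabla u\|_{L^2}^2$. The only cosmetic difference is that the paper cites Lemma~\ref{le3.2} for the last step while you invoke its corollary \eqref{gg1} from Lemma~\ref{le3.4}; since \eqref{gg1} uses exactly the $\int_{\partial\Omega}K|u|^2\,dS$ term furnished by the energy identity, your choice is arguably the cleaner citation, but the substance is identical.
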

\begin{proof}
Since $\Delta u=\nabla \mathrm{div}u+\nabla^\bot\omega$, we can rewrite \eqref{11}$_2$ as
\begin{equation}\label{ml}
\rho\dot{u}=\nabla((2\mu+\lambda(\rho)) \mathrm{div}u)-\nabla P+\mu\nabla^\bot\omega.  
\end{equation}
Multiplying \eqref{ml} by $u$ and integrating over $\Omega$, one deduces from \eqref{11}$_1$ and the boundary conditions \eqref{15} that
\begin{equation*}
\begin{split}
&\frac{d}{dt}\int_\Omega\left(\frac12\rho|u|^2+ \frac{\rho^\gamma}{\gamma-1}\right)\,dx\\
&+\int_\Omega\big((2\mu+\lambda)(\mathrm{div}u  )^2+\mu\omega^2\big)\,dx+\mu\int_{\partial \Omega}K|u|^2\,dS=0,
\end{split}
\end{equation*}
where we have applied the fact $u=(u\cdot n^\bot)\,n^\bot$ due to \eqref{15}. Then  
integrating the above equality over $(0, T)$ yields
\begin{equation*}
\begin{split}
&\sup_{0\leq t\leq T}\int_\Omega(\rho|u|^2+\rho^\gamma)\,dx+\int_0^T\int_\Omega\left((2\mu+\lambda)(\mathrm{div}u  )^2+\mu\omega^2 \right)\,dxdt\\
&+\int_0^T\int_{\partial \Omega}K|u|^2\,dSdt\leq C,  
\end{split}
\end{equation*}
which combined with Lemma \ref{le3.2} shows \eqref{300} directly and finishes the proof.
\end{proof}

Based on the energy estimates \eqref{300}, the  arguments in \cite{flw} lead to the uniform $L^p$-estimates of density, see also related topics in \cite{vaigant1995}. 
\begin{lemma}[\cite{flw}]\label{004}
Let $g_+\triangleq\max\{g,0\}$, then for any $p\geq 2$, there are constants $M\geq 1$ and $C$ depending only on $\mu$, $\alpha$, $\beta$, $\gamma$, $\rho_0$, $u_0$, and $\Omega$, such that
\begin{equation}\label{S327}
\sup_{0\leq t\leq T}\|\rho(\cdot,t)\|_{L^{p} }+\int_0^T\int_\Omega(\rho-M)_+^{p} \,dxdt\leq C,
\end{equation}
\begin{equation}\label{S3277}
\int_0^T\int_\Omega(\rho+1)^{\gamma-1}(\rho-\hat{\rho})^2 dxdt\leq C.
\end{equation}
\end{lemma}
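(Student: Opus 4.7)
The plan is to follow the strategy of Fan-Li-Wang \cite{flw} adapted to the multi-connected geometry, with the commutator machinery of Section \ref{S4} replacing the whole-space commutator arguments. The two estimates require different tools: \eqref{S3277} will come from a Bogovskii test of the momentum equation, while \eqref{S327} will come from an evolution identity for $\int \rho^p\,dx$ combined with the effective viscous flux.

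First I would establish \eqref{S3277}. Choose $\Phi$ solving $\mathrm{div}\,\Phi = (\rho+1)^{\gamma-1}(\rho-\hat\rho) - c(t)$ with $\Phi|_{\partial\Omega}=0$, where $c(t)$ is the zero-mean correction, and use the standard Bogovskii estimate $\|\nabla\Phi\|_{L^r}\lesssim \|(\rho+1)^{\gamma-1}(\rho-\hat\rho)\|_{L^r}$. Testing \eqref{11}$_2$ against $\Phi$ and integrating over $\Omega\times(0,T)$, the pressure term yields $\int_0^T\!\!\int (\rho+1)^{\gamma-1}(\rho-\hat\rho)\rho^\gamma\,dxdt$, which is bounded below by the claimed dissipation up to a constant multiple. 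The remaining viscous, convective and time-boundary terms are absorbed using \eqref{300} together with the $L^\gamma$ control on $\rho$.

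Second, for \eqref{S327}, multiply \eqref{11}$_1$ by $p\rho^{p-1}$ and integrate to obtain
\begin{equation*}
\frac{d}{dt}\int_\Omega \rho^p\,dx + (p-1)\int_\Omega \rho^p\,\mathrm{div}\,u\,dx = 0.
\end{equation*}
Substituting $\mathrm{div}\,u = (F + P - \bar P)/(2\mu+\rho^\beta)$ from \eqref{303} and splitting via $\{\rho\le M\}\cup\{\rho>M\}$: for $M$ large the pressure part is coercive, producing a dissipative lower bound of the form $c\int (\rho-M)_+^{\,p+\gamma-\beta}dx$, while the $\bar P$ contribution and the low-density part are harmless thanks to Lemma \ref{3111}. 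This leaves the flux term
\begin{equation*}
(p-1)\int_\Omega \frac{\rho^p F}{2\mu+\rho^\beta}\,dx,
\end{equation*}
which is the heart of the matter.

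The main obstacle, and the reason $\beta>4/3$ appears, is controlling this flux term. I would bound $\|F\|_{L^q}$ for suitable $q$ using the Neumann problem \eqref{C501} together with Lemma \ref{qp08}: the inner commutator $[u_i,\partial_i\Delta^{-1}\partial_j](\rho u_j)$ and the boundary commutator $B(x)$ reduce the naive singularity by one order via the cancellation estimate \eqref{CA443}, so that standard singular integral bounds apply. Combined with $\|\rho|u|^2\|_{L^1}\lesssim 1$ from \eqref{300}, Hölder's inequality in the $\rho$-weight, and interpolation between $L^p$ and $L^{p+\gamma-\beta}$, the flux integral can be split as a small multiple of the dissipation plus $C(1+\|\sqrt{\rho}\dot u\|_{L^2}^2)\int\rho^p\,dx$. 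Applying Gronwall's inequality (with the time integral of $\|\sqrt\rho\dot u\|_{L^2}^2$ taken as a coefficient that will be shown bounded in subsequent lemmas, or handled here via a bootstrap from $p=2$), then integrating in time, yields both bounds in \eqref{S327} simultaneously. Starting from $p=2$ where only \eqref{300} and \eqref{S3277} are needed, the argument extends to arbitrary $p\ge 2$ by iteration on $p$.
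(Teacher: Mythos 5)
The paper does not prove this lemma: it is imported from \cite{flw} with the one-line remark that the argument there applies once the basic energy inequality \eqref{300} is in hand. So there is no paper-internal proof to compare against, and your proposal has to stand on its own. Your overall architecture (Bogovskii test for \eqref{S3277}; transport of $\rho^p$ plus the substitution $\mathrm{div}\,u=(F+P-\bar P)/(2\mu+\lambda)$ for \eqref{S327}) is the right family of techniques, but there is a genuine gap in how you close \eqref{S327}.

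The problem is the last step of your argument for \eqref{S327}. You bound the flux term $\int\rho^p F/(2\mu+\lambda)\,dx$ by estimating $\|F\|_{L^q}$ at each fixed time and then invoking Gr\"onwall's inequality ``with the time integral of $\|\sqrt{\rho}\dot u\|_{L^2}^2$ taken as a coefficient that will be shown bounded in subsequent lemmas, or handled here via a bootstrap from $p=2$.'' This is circular. In the paper's ordering, Lemma \ref{004} must be available \emph{before} Lemma \ref{0001}, Lemma \ref{005}, Lemma \ref{LL56} and the key flux estimate \eqref{CA460}; the first place $\int_0^T\|\sqrt{\rho}\dot u\|_{L^2}^2\,dt\le C$ appears is \eqref{3103}, and that estimate itself uses all of the above including Lemma \ref{004}. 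At the stage of Lemma \ref{004} you have only \eqref{300} at your disposal, which gives no bound on $\rho\dot u$. The elliptic estimate for \eqref{C501} gives $\|F\|_{H^1}\lesssim\|\rho\dot u\|_{L^2}+\dots$, so bounding $\|F\|_{L^q}$ inevitably reintroduces $\rho\dot u$, and a bootstrap in $p$ does not change the source of the singular coefficient.

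The missing idea is that the whole point of the representation \eqref{qp11} is not to bound $\|F\|_{L^q}$ at fixed $t$, but to exploit the total-time-derivative structure $\frac{\mathrm{d}}{\mathrm{d}t}\Delta^{-1}\mathrm{div}(\rho u)$. After integrating the $\rho^p$-transport identity in time, the flux contribution coming from this term is handled by an integration by parts in $t$: the boundary-in-time term involves only $\Delta^{-1}\mathrm{div}(\rho u)$ (a Riesz-potential of $\rho u$, controlled by $\|\rho u\|_{L^q}$ with $q>1$, available from \eqref{300} and $\rho\in L^\infty_t L^\gamma_x$ via $\|\rho u\|_{L^q}\le\|\rho\|^{1/2}_{L^{q/(2-q)}}\|\sqrt{\rho}u\|_{L^2}$), and the interior-in-time term picks up $\partial_t(\rho^p)=-\mathrm{div}(\rho^p u)-(p-1)\rho^p\mathrm{div}\,u$, which is again controlled by \eqref{300}. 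The commutator $[u_i,\partial_i\Delta^{-1}\partial_j](\rho u_j)$ and $B(x)$ are then estimated pointwise by the singular-integral bounds of Lemma \ref{qp08} against quantities already dominated by \eqref{300}. In short: the commutator machinery of Section~\ref{S4} is relevant here (your instinct is right), but the way it must be used is through time integration by parts on the $\frac{\mathrm{d}}{\mathrm{d}t}$ piece, not through a fixed-time $L^q$ bound on $F$ followed by Gr\"onwall in a quantity that is not yet available. A secondary and more minor concern is your Bogovskii test $\mathrm{div}\,\Phi=(\rho+1)^{\gamma-1}(\rho-\hat\rho)-c(t)$: at this stage the right-hand side is only in $L^1$ (it grows like $\rho^\gamma$), so the Bogovskii estimate in a useful $L^r$ is not immediately available; the usual route is to take $\mathrm{div}\,\Phi=P-\hat P$ and recover the weighted $(\rho+1)^{\gamma-1}(\rho-\hat\rho)^2$ dissipation by a pointwise comparison of $(P-\hat P)^2$ with $(\rho+1)^{\gamma-1}(\rho-\hat\rho)^2$.
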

 
Combining Lemmas  \ref{3111} and \ref{004}, we can establish the extra integrability of the momentum $\rho u$. Some modifications are demanded for the multi-connected domain $\Omega$.
\begin{lemma}\label{0001} 
There exists a constant $C $ depending on  $\mu$,  $\beta$, $\gamma$, $\rho_0$, $u_0$, and $\Omega$, such that
\begin{equation}\label{qp90}
\sup_{0\leq t\leq T}\int_\Omega \rho |u|^{2+\nu}\,dx\leq C ,
\end{equation} 
where we set
\begin{equation}\label{nu1q}
\nu\triangleq R_T^{-\beta/2}\nu_{0},
\end{equation}
for some suitably small generic constant $\nu_0\in (0,(\gamma-1)/(\gamma+1))$ depending only on $\mu$, $\gamma$, $\rho_0$, $u_0$, and $\Omega$.
\end{lemma}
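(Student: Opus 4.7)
The plan is to test the momentum equation \eqref{ml} against $|u|^{\nu} u$, extract a time derivative of $\int \rho |u|^{2+\nu}\,dx$ via continuity, and then close the inequality using the multi-connected div--curl estimate from Lemma \ref{a004} together with the $L^p$ control of $\rho$ from Lemma \ref{004}. Concretely, I would first use \eqref{11}$_1$ to check that
\begin{equation*}
\int_\Omega \big(\partial_t(\rho u) + \mathrm{div}(\rho u \otimes u)\big)\cdot |u|^\nu u\,dx = \tfrac{1}{2+\nu}\tfrac{d}{dt}\int_\Omega \rho |u|^{2+\nu}\,dx,
\end{equation*}
using $u\cdot n=0$ on $\partial\Omega$ to kill the boundary flux. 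The right-hand side of \eqref{ml}, tested against $|u|^{\nu}u$ and integrated by parts, produces the main dissipation
\begin{equation*}
\int_\Omega (2\mu+\lambda)(\mathrm{div}u)\,\mathrm{div}(|u|^\nu u)\,dx + \mu\int_\Omega \omega\,\mathrm{curl}(|u|^\nu u)\,dx + \mu\int_{\partial\Omega} K|u|^{2+\nu}\,dS,
\end{equation*}
together with the pressure work $-\int_\Omega P\,\mathrm{div}(|u|^\nu u)\,dx$ and the boundary contribution from $\mathrm{curl}\,u = -Ku\cdot n^{\bot}$ on $\partial\Omega$.

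Next I would expand $\mathrm{div}(|u|^\nu u) = |u|^\nu \mathrm{div}u + \nu|u|^{\nu-2}(u\cdot\nabla)|u|\cdot u$ and likewise for $\mathrm{curl}$, so that the principal dissipative terms are of the form $\int_\Omega |u|^\nu\big((2\mu+\lambda)(\mathrm{div}u)^2+\mu\,\omega^2\big)dx$, while the remaining ``cross'' contributions are bounded by $C\nu\int_\Omega (1+\lambda)|u|^\nu|\nabla u|^2\,dx$. Since $\lambda(\rho)=\rho^\beta\leq R_T^\beta$, choosing $\nu=R_T^{-\beta/2}\nu_0$ with $\nu_0$ small enough (and invoking Cauchy--Schwarz on the $\nu|u|^\nu|\nabla u|$-type integrands) absorbs these cross terms into the good dissipation. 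Combining with Lemma \ref{a004}, which reads
\begin{equation*}
\int_\Omega |u|^\nu|\nabla u|^2\,dx \leq C_1\Big(\int_\Omega |u|^\nu\big((\mathrm{div}u)^2+(\mathrm{curl}u)^2\big)dx + \int_\Omega \rho|u|^{2+\nu}\,dx\Big),
\end{equation*}
I obtain
\begin{equation*}
\tfrac{d}{dt}\int_\Omega \rho|u|^{2+\nu}dx + c\int_\Omega |u|^\nu|\nabla u|^2\,dx \leq C\int_\Omega |P-\hat P|\,|u|^\nu\big(|\mathrm{div}u|+|\nabla u|\big)dx + C\int_\Omega \rho|u|^{2+\nu}dx.
\end{equation*}

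Then I would handle the pressure work by splitting $P=(P-\hat P)+\hat P$, using $\int\mathrm{div}(|u|^\nu u)\cdot \hat P\,dx=0$ up to boundary (vanishing because $u\cdot n=0$), and then applying Hölder/Young with Lemma \ref{004} to bound $\|P-\hat P\|_{L^{2(\gamma+1)/(\gamma-1)}}$ via $\int (\rho+1)^{\gamma-1}(\rho-\hat\rho)^2\in L^1_t$; the choice $\nu_0<(\gamma-1)/(\gamma+1)$ is what makes the interpolated norms integrable in time via Lemma \ref{3111} and \eqref{S3277}. Finally, a Gronwall argument on $f(t)=\int_\Omega\rho|u|^{2+\nu}dx$, with time-integrable forcing coming from the $L^p$ bounds on $\rho$, delivers \eqref{qp90}.

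The main obstacle I anticipate is keeping the dependence on $R_T$ transparent: the large factor $\rho^\beta\leq R_T^\beta$ threatens to destroy the absorption of the cross terms, and the specific scaling $\nu\sim R_T^{-\beta/2}$ is precisely tuned so that $\nu \cdot \lambda(\rho)^{1/2}\lesssim \nu_0$ appears in the Cauchy--Schwarz splitting $\nu\,\lambda|u|^\nu|\nabla u|^2\leq \varepsilon\lambda|u|^\nu(\mathrm{div}u)^2+C\nu^2\lambda|u|^\nu|\nabla u|^2$, making the remaining term harmless. A secondary delicate point is controlling the boundary term arising from $\mathrm{curl}u=-Ku\cdot n^\bot$: after integration by parts, this contributes $-\mu\int_{\partial\Omega}K|u\cdot n^\bot|^2 |u|^\nu\,dS$, which has the favourable sign since $K\geq 0$ and can therefore be discarded.
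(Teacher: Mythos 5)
Your proposal matches the paper's proof in all essential respects: the paper also tests the momentum equation (in the equivalent flux form $\rho\dot u = \nabla F + \mu\nabla^\perp\omega$, equation \eqref{34}) against $|u|^\nu u$, absorbs the $\nu$-weighted cross terms via Lemma~\ref{a004} after noting that the scaling $\nu=R_T^{-\beta/2}\nu_0$ makes $\nu^2\lambda\lesssim\nu_0^2$, and closes with the constraint $\nu_0<(\gamma-1)/(\gamma+1)$ and Gr\"onwall. The only rough edges in your write-up are bookkeeping ones: the exponent you name for the pressure piece, $\|P-\hat P\|_{L^{2(\gamma+1)/(\gamma-1)}}$, is not the one used (the paper factors $|\rho^\gamma-\hat\rho^\gamma|\lesssim(\rho^{\gamma-1}+1)|\rho-\hat\rho|$, truncates via $(\rho-M)_+$ from Lemma~\ref{004}, and uses Young with $1/p'=(1-\nu)/2-1/(\gamma+1)$); and to actually run Gr\"onwall one also needs the interpolation $\int_\Omega\rho|u|^{2+\nu}dx\leq C\big(\|\rho|u|^{2+\nu}\|_{L^1}+1\big)A^2$ together with $\int_0^TA^2\,dt\leq C$ so that the Gr\"onwall coefficient is integrable in time --- but these are computational details, not a different idea.
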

\begin{proof}
First, we deduce from \eqref{300} and \eqref{S3277} that for $A(t)$ defined in \eqref{a},
\begin{equation}\label{w2}
\int_0^T A^2(t)\,dt\leq C.
\end{equation}
Following the proof of \cite[Lemma 3.7]{huang2016existence}, we multiply \eqref{34} by $ |u|^\nu u$ and integrate over $\Omega$ by parts to deduce that
\begin{equation}\label{w3}
\begin{split}
&\frac{d}{dt}\int_\Omega\rho |u|^{2+\nu}dx+ \int_\Omega|u|^\nu\big((2\mu+\lambda)(\mathrm{div}u)^2+\mu\omega^2\big)\,dx
+\mu\int_{\partial \Omega}K|u|^{2+\nu}dS\\
&\leq 4 \nu\int_\Omega(2\mu+\lambda)|\mathrm{div}u|\,|u|^{\nu}|\nabla u|\,dx+4\nu\mu\int_\Omega|u|^{\nu}|\omega|\,|\nabla u|\,dx\\
&\quad+C\int_\Omega|\rho^\gamma-\hat{\rho}^\gamma|\,|u|^\nu|\nabla u|\,dx\\
&\leq \frac{1}{2}\int_\Omega|u|^\nu\big((2\mu+\lambda)(\mathrm{div}u)^2+\mu\omega^2\big)\,dx+ C\,\nu_0^2\int_\Omega|u|^\nu|\nabla u|^2\,dx\\
&\quad+C\int_\Omega(\rho^{\gamma-1}+1)|\rho-\hat\rho|\,|u|^\nu|\nabla u|\,dx,\\
\end{split}
\end{equation}
where in the last line we have applied \eqref{nu1q}. In view of Lemma \ref{a004}, we declare that
\begin{equation}\label{C401}
C\,\nu_0^2\int_\Omega|u|^{\nu}|\nabla u|^2\,dx\leq  \frac{\mu}{4}\int_\Omega|u|^{\nu}\big((\mathrm{div}u)^2+\omega^2)\,dx+C\int_\Omega\rho|u|^{2+\nu}dx,
\end{equation}
by setting $\nu_0^2\leq\mu/(4CC_1)$ with $C_1$ given by Lemma \ref{a004}. In addition, 
with the help of \eqref{S327} and Sobolve's inequality, one gets for $p=4/(2-\nu)$ and $\theta=\nu/(2+\nu)$,
\begin{equation}\label{w4}
\begin{split}
&\int_\Omega\rho|u|^{2+\nu}\,dx \leq \|\rho|u|^{2+\nu}\|_{L^1 }^{\theta}\|\rho\|_{L^{p} }^{1-\theta}\|u\|_{L^4 }^2\leq  C\left(\|\rho|u|^{2+\nu}\|_{L^1 }+1\right)\cdot A^2.
\end{split}
\end{equation}
Moreover, for $p'$ satisfying $1/p'=(1-\nu)/2-1/(\gamma+1)$, it holds that
\begin{equation}\label{w1}
\begin{split}
&\int_\Omega(\rho^{\gamma-1}+1)|\rho-\hat\rho|\,|u|^\nu|\nabla u|\,dx\\
&\leq C\int_\Omega\left((\rho-M)_+^{\gamma-1}+1\right)|\rho-\hat\rho|\,|u|^\nu|\nabla u|\,dx\\
&\leq C\left(\int_\Omega(\rho-M)_+^{p'(\gamma-1)}dx+\int_\Omega\left(|\rho-\hat\rho|^{\gamma+1}+|\rho-\hat\rho|^\frac{2}{1-\nu}\right)dx+\int_\Omega|\nabla u|^2\,dx\right)\\
&\leq CA^2+C\int_\Omega(\rho-M)_+^{p'(\gamma-1)}dx,
\end{split}
\end{equation}
where $p'(\gamma-1)>2$ and the third line is due to H\"{o}lder's and Poincar\'{e}'s inequalities, while the last line is due to the following fact:
$$|\rho-\hat\rho|^{\frac{2}{1-\nu}}\leq(\rho+1)^{\frac{2\nu}{1-\nu}}(\rho-\hat\rho)^2\leq C(\rho+1)^{\gamma-1}(\rho-\hat\rho)^2,$$
since $\nu<\nu_0<(\gamma-1)/(\gamma+1)$.

Therefore, by submitting \eqref{C401}, \eqref{w4}, and \eqref{w1} into \eqref{w3}, we apply \eqref{w2} and Gr\"{o}nwall's inequality to obtain \eqref{qp90}. The proof of Lemma \ref{0001} is finished.
\end{proof}

Then let us deal with the $L^p$-estimates of $\nabla u$, which will be repeatedly used in later discussions, see also \cite[Proposition 3.5]{flw}. 
\begin{lemma}\label{005} 
For any $p>2 $ and $\varepsilon>0,$ we denote $\kappa=1-2/p$, then there exists some positive constant $C(p, \varepsilon)$ depending only on  $p$, $\varepsilon,$ $\mu$, $\gamma$, $\beta$,   and $\Omega$, such that
\begin{equation}\label{qp31}
\|\nabla u\|_{L^{p} }\leq C(p, \varepsilon)\,R_T^{{\kappa}/{2}+\varepsilon} (A+1)^{1-\kappa}  (A+B+1)^{\kappa}.
\end{equation}
In particular, when $\kappa\leq\varepsilon<(\gamma+1)^{-1}$ and $\gamma<2\beta$, we have
\begin{equation}\label{S413}
\|\nabla u\|_{L^{p} }\leq C(p, \varepsilon)\, R_T^{\varepsilon}\,A^{1-\kappa} (A+B+1)^{\kappa}.
\end{equation}
\end{lemma}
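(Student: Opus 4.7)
The plan is to combine the div-curl estimate of Lemma~\ref{le3.4} with elliptic regularity for the effective viscous flux $F$ and the vorticity $\omega$, together with a Gagliardo-Nirenberg interpolation in two dimensions. Since $u\cdot n=0$ on $\partial\Omega$, inequality \eqref{gg1} reduces the task to bounding
$$\|\nabla u\|_{L^p}\leq C\bigl(\|\mathrm{div}\,u\|_{L^p}+\|\omega\|_{L^p}\bigr)+C\,A.$$
Writing $\mathrm{div}\,u=(F+P-\bar P)/(2\mu+\lambda)$ and using $(2\mu+\lambda)^{-1}\leq(2\mu)^{-1}$, it suffices to control $\|F\|_{L^p}$, $\|\omega\|_{L^p}$ and $\|P-\bar P\|_{L^p}$.

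Second, recall from \eqref{ml} that $\rho\dot u=\nabla F+\mu\nabla^{\bot}\omega$, so $F$ satisfies the Neumann problem \eqref{C501} while $\omega$ satisfies an analogous Dirichlet problem with right-hand side involving $\mathrm{curl}(\rho\dot u)$ and boundary datum proportional to $Ku\cdot n^{\bot}$. Standard $L^2$ elliptic estimates, together with trace bounds for the inhomogeneous boundary terms, give
$$\|\nabla F\|_{L^2}+\|\nabla\omega\|_{L^2}\leq C\,\|\rho\dot u\|_{L^2}+C\,\|u\|_{H^1}\leq CR_T^{1/2}(A+B+1),$$
while the definitions of $F,\omega$ combined with \eqref{300} and \eqref{S3277} and the pointwise bound $2\mu+\lambda\leq CR_T^{\beta}$ yield
$$\|F\|_{L^2}+\|\omega\|_{L^2}\leq CR_T^{\beta/2}(A+1).$$

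Applying the 2D Gagliardo-Nirenberg inequality $\|g\|_{L^p}\leq C\bigl(\|g\|_{L^2}^{1-\kappa}\|\nabla g\|_{L^2}^{\kappa}+\|g\|_{L^2}\bigr)$ with $\kappa=1-2/p$ to $g=F$ and $g=\omega$ and substituting the preceding estimates gives the correct structural factors $(A+1)^{1-\kappa}(A+B+1)^{\kappa}$ together with an $R_T$-exponent of $\beta(1-\kappa)/2+\kappa/2=\beta/p+\kappa/2$. A parallel interpolation between the bound $\|P-\bar P\|_{L^2}^2\leq CR_T^{\gamma-1}A^2$ and the $L^{\infty}$ bound $\|P\|_{L^{\infty}}\leq R_T^{\gamma}$ handles the pressure contribution.

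The main obstacle is to trade the excess factor $R_T^{\beta/p}$, coming from the crude $L^2$ bound $\|F\|_{L^2}\leq CR_T^{\beta/2}A$, for the $R_T^{\varepsilon}$ allowed by the statement. I would refine the interpolation: instead of going through $\|F\|_{L^2}$, interpolate $\|F\|_{L^p}$ between $\|F\|_{L^r}$ with $r$ close to $p$ (so that the weight on the low-order norm is small) and $\|\nabla F\|_{L^q}$ for a suitable $q<2$, using 2D Sobolev embedding. For each fixed pair $(p,\varepsilon)$ the exponents $r,q$ can be tuned so that the surplus $R_T$ power is at most $\varepsilon$; the constant $C(p,\varepsilon)$ may blow up as $\varepsilon\downarrow 0$, consistent with the statement. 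The sharpened estimate \eqref{S413} then follows from the further hypotheses $\kappa\leq\varepsilon<(\gamma+1)^{-1}$ and $\gamma<2\beta$, which allow the additive $+1$'s in the $(A+1)$ factors to be reabsorbed into $A$.
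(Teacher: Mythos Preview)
Your overall structure is right---the div-curl estimate, the elliptic bounds for $F$ and $\omega$, and Gagliardo--Nirenberg interpolation are exactly what the paper uses. The gap is in how you propose to remove the surplus factor $R_T^{\beta/p}$.

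Your suggested fix (interpolate $\|F\|_{L^p}$ between $\|F\|_{L^r}$ with $r$ near $p$ and $\|\nabla F\|_{L^q}$, putting small weight on the low norm) cannot work: if the exponent on the low-order norm is made small, the exponent on the gradient norm is forced close to $1$, so you end up with $(A+B+1)^{1-\theta}$ instead of the required $(A+B+1)^\kappa$. The structural constraint is that the gradient norm must carry weight \emph{exactly} $\kappa$, and any interpolation placing weight $\kappa$ on $\|F\|_{H^1}$ and weight $1-\kappa$ on $\|F\|_{L^2}$ brings in $R_T^{\beta(1-\kappa)/2}$ from the crude bound $\|F\|_{L^2}\leq CR_T^{\beta/2}(A+1)$.

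The point you are missing is that one should not pass through $\|F\|_{L^p}$ at all. Since $\mathrm{div}\,u=(2\mu+\lambda)^{-1}F+(2\mu+\lambda)^{-1}(P-\bar P)$, the relevant quantity is $\|(2\mu+\lambda)^{-1}F\|_{L^p}$, and the weight $(2\mu+\lambda)^{-1}$ should be kept. The paper interpolates
\[
\|(2\mu+\lambda)^{-1}F\|_{L^p}\le \|(2\mu+\lambda)^{-1}F\|_{L^2}^{\,1-\kappa-\varepsilon}\,\|F\|_{L^q}^{\,\kappa+\varepsilon},\qquad q=\frac{2(\kappa+\varepsilon)}{\varepsilon},
\]
using $(2\mu+\lambda)^{-1}\le(2\mu)^{-1}$ only on the high end. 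The crucial observation is that
\[
\|(2\mu+\lambda)^{-1}F\|_{L^2}\le\|\mathrm{div}\,u\|_{L^2}+\|(2\mu+\lambda)^{-1}(P-\bar P)\|_{L^2}\le C(A+1)
\]
with \emph{no} $R_T$ factor (the pressure piece is uniformly bounded by \eqref{S327}). Then $\|F\|_{L^q}^{\kappa+\varepsilon}\le C\|F\|_{L^2}^{\varepsilon}\|F\|_{H^1}^{\kappa}$ by Gagliardo--Nirenberg, and the $R_T^{\beta/2}$ from $\|F\|_{L^2}$ now only enters to the power $\varepsilon$, giving $R_T^{\beta\varepsilon/2+\kappa/2}$, which is \eqref{qp31} after renaming $\varepsilon$.

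Your explanation of \eqref{S413} is also off: the improvement from $(A+1)^{1-\kappa}$ to $A^{1-\kappa}$ is not a matter of absorbing $+1$'s, but of replacing the uniform bound on $(2\mu+\lambda)^{-1}(P-\bar P)$ by the sharper $\|(2\mu+\lambda)^{-1}(P-\bar P)\|_{L^p}\le CA$, which holds precisely when $p(\gamma-\beta)-2<\gamma-1$, i.e.\ when $\gamma<2\beta$ and $\kappa$ is small enough.
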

\begin{proof}
First, by the definition of $F$ in \eqref{303},  \eqref{ml} is equivalent to 
\begin{equation}\label{34}
\rho\dot{u}=\nabla F+\mu\nabla^\bot\omega,   
\end{equation} which together with  the boundary condition \eqref{15} yields that $\omega$ solves a Dirichlet boundary problem
\begin{equation}\label{ty}
\begin{cases}
  \mu\Delta\omega=\nabla^\bot\cdot(\rho\dot{u})& \mbox{ in } \Omega, \\
   \omega=-Ku\cdot n^\bot& \mbox{ on } \partial\Omega.  \end{cases}
\end{equation}
Then, standard $L^p$-estimates of elliptic system \eqref{ty} (see\cite{GT}, \cite[Lemma 4.27]{novotny2004introduction}) along with \eqref{34} imply that for any integer $k\geq 0$ and  $p\in(1, \infty)$,
\begin{equation}
\|\nabla F\|_{W^{k, p} }+\|\nabla\omega\|_{W^{k, p} }\leq C(p, k)\,(\|\rho\dot{u}\|_{W^{k, p} }+\|\nabla u\|_{W^{k,p} }),  \label{336}
\end{equation}
which together with \eqref{25}, \eqref{303},  \eqref{S327}, and \eqref{S3277} implies that
\begin{equation} \label{311}
\begin{split}
&\|F\|_{H^1 }+\|\omega\|_{H^1 }\\
&\leq C\left(\|\nabla F\|_{L^2 }+\|\nabla\omega\|_{L^2 }+A+|\hat F|\right)\\
&\le C\left(\|\rho\dot u\|_{L^2 }+A+\|\nabla u\|_{L^2 }\|\rho^\beta-\hat\rho^\beta\|_{L^2 }+\|\rho^\gamma-\hat\rho^\gamma\|_{L^1 }\right)\\
&\leq C\left(R_T^{1/2}B+A\right). 
\end{split}
\end{equation}
Thus we apply \eqref{g4} and \eqref{S327} to derive that for $p>2$, $\varepsilon>0$,  $q=2\big((1+\varepsilon)p-2\big)/p\varepsilon$,
\begin{equation*} 
\begin{split}
\|\nabla u\|_{L^{p} }
&\leq C(p)\big(\|\mathrm{div}u\|_{L^{p} }+\|\omega\|_{L^{p}  }+A \big) \\
&\leq C(p)\left(\left\|(2\mu+\lambda)^{-1}F \right\|_{L^{p} }+ \|\omega\|_{L^{p} }+A+1\right) \\ 
&\leq C(p)\left(\left\|(2\mu+\lambda)^{-1}F\right\|_{L^{2} }^{1-\kappa-\varepsilon}\|F\|_{L^{q} }^{\kappa +\varepsilon}+\|\omega\|_{L^{2}}^{1-\kappa}\|\omega\|_{H^1}^{\kappa }+A+1\right)
\\&\leq C(p, \varepsilon)\,(A+1)^{1-\kappa -\varepsilon}\|F\|_{L^{2} }^{\varepsilon} \|F\|_{H^1 }^{\kappa }+C(p)\left(A^{1-\kappa }\| \omega\|_{H^1}^{\kappa }+ A+1\right)\\
&\leq C(p, \varepsilon)\,R_T^{\frac{\beta\varepsilon}{2} }(A+1)^{1-\kappa }( \|F\|_{H^1}+\|\omega\|_{H^1})^{\kappa }+C(p)(A+1),
\end{split}
\end{equation*}  
which together with \eqref{311} gives \eqref{qp31}.

In particular, when $\kappa <\varepsilon<(\gamma+1)^{-1}$ and $\gamma<2\beta$, we derive that
\begin{equation}\label{S421}
\begin{split}
&\|F\|_{L^2}\le CR_T^\beta\|\mathrm{div} u\|_{L^2}+\|P-1\|_{L^2}\le CR_T^{\beta+\gamma}A,\\
\big\|(2\mu&+\lambda)^{-1}F\big\|_{L^2} \leq C\bigg(\|\mathrm{div}u\|_{L^2}+\left\|(2\mu+\lambda)^{-1}(P-\hat P)\right\|_{L^{2}} \bigg)\leq CA,
\end{split}
\end{equation}
where the last line is due to the fact $p(\gamma-\beta)-2<(\gamma-1)$ and
\begin{equation}\label{CC402}
\begin{split}
\left\|(2\mu+\lambda)^{-1}(P-\hat P)\right\|_{L^{p} }^p\leq C\int_\Omega\big(\rho+1\big)^{p(\gamma-\beta)-2}
\big(\rho-1\big)^2\,dx\leq CA^2.
\end{split}
\end{equation}
Consequently, by making use of \eqref{311}--\eqref{CC402}, we infer that
\begin{equation*}
\begin{split}
&\|\nabla u\|_{L^{p}}\\
&\leq C(p)\left(\left\|(2\mu+\lambda)^{-1}F \right\|_{L^{p} }+\left\|(2\mu+\lambda)^{-1}(P-\hat P)\right\|_{L^{p} }+\|\omega\|_{L^{p} }+A\right)\\ 
&\leq C(p)\left(\left\|(2\mu+\lambda)^{-1}F\right\|_{L^{2} }^{1-\kappa -\varepsilon}\|F\|_{L^{2} }^{\varepsilon} \|F\|_{H^1 }^{\kappa }+A^{1-\kappa }+ \|\omega\|_{L^{2}}^{1-\kappa }\|\omega\|_{H^1}^{\kappa } +A\right)\\
&\leq C(p,\varepsilon)\,A^{1-\kappa -\varepsilon}\|F\|_{L^{2} }^{\varepsilon} \|F\|_{H^1 }^{\kappa }+C(p)A^{1-\kappa}\left( \| \omega\|_{H^1}^{\kappa }+ A^\kappa+1\right)\\
&\leq C(p, \varepsilon)\,R_T^{(\beta+\gamma+1)\varepsilon} A^{1-\kappa }(A+B+1)^{\kappa }.
\end{split}
\end{equation*}
which is \eqref{S413} and finishes the proof of Lemma \ref{005}.
\end{proof}
 
The next lemma provides further energy estimates concerning the upper bound of $\mathrm{log}(e+\|\nabla u\|_{L^2})$. The proof remains the same as that of \cite[Proposition 3.6]{flw} for simply connected domains.
\begin{lemma}\label{LL56}
For any $\varepsilon\in(0, 1/2)$, there is a constant $C(\varepsilon)$ depending only on $\varepsilon$, $\mu$, $\beta$, $\gamma$, $\rho_0$, $u_0$, and $\Omega$, such that
\begin{equation}\label{qp40}
\sup_{0\leq t\leq T}\mathrm{log}\left(e +A^2(t)\right) +\int^T_0 \frac{B^2(t)}{1+A^2(t)}dt\leq C(\varepsilon)R_T^{1 +\varepsilon}.
\end{equation}
\end{lemma}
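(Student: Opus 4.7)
The plan is a Hoff-type test-function argument: I would multiply the momentum equation in the form \eqref{34} by the material derivative $\dot u$, integrate over $\Omega$, and produce a differential inequality of the shape $\frac{d}{dt} A^2 + B^2 \leq C(\varepsilon)\,R_T^{1+\varepsilon}(1+A^2)$. Dividing by $1+A^2$ converts this into a pointwise bound on $\frac{d}{dt}\log(e+A^2)$, and integrating over $[0,T]$ together with the time-integrated energy bound $\int_0^T A^2\, dt \leq C$ from \eqref{w2} delivers \eqref{qp40}.

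For the energy identity, testing \eqref{34} against $\dot u$ yields $\int \rho|\dot u|^2\,dx = \int \nabla F \cdot \dot u\,dx + \mu \int \nabla^\perp \omega \cdot \dot u\,dx$. Writing $\dot u = u_t + u \cdot \nabla u$ and integrating by parts, the $u_t$-part generates the interior time derivatives $\frac12 \frac{d}{dt}\int (2\mu+\lambda)(\mathrm{div}\, u)^2\,dx$ and $\frac{\mu}{2}\frac{d}{dt}\int \omega^2\,dx$; the boundary contribution, using $\mathrm{curl}\, u = -Ku\cdot n^\perp$ together with $u_t\cdot n = 0$ on $\partial\Omega$, assembles into $\frac{\mu}{2}\frac{d}{dt}\int_{\partial\Omega} K|u|^2\, dS$; the pressure term $-\int \nabla P\cdot \dot u\,dx$, after integration by parts and the continuity equation, produces $\frac{d}{dt}\int \rho^\gamma/(\gamma-1)\,dx$ plus cubic pressure-velocity terms. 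Collecting everything, I obtain
\[
\frac{d}{dt}A^2 + 2B^2 = \mathcal{R}_1 + \mathcal{R}_2 + \mathcal{R}_3,
\]
where $\mathcal{R}_1$ collects trilinear velocity terms such as $\int \lambda(\mathrm{div}\,u)^3\,dx$ and $\int \omega^2\,\mathrm{div}\,u\,dx$, $\mathcal{R}_2$ collects pressure-velocity products $\int (P-\hat P)\,G(\nabla u)\,dx$ with $G$ quadratic, and $\mathcal{R}_3$ is generated by $\partial_t\lambda(\rho)$ handled via the continuity equation.

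The worst remainder is $\int \lambda(\mathrm{div}\,u)^3\,dx \leq R_T^\beta\|\nabla u\|_{L^3}^3$. I would pick $p>2$ large and interpolate $\|\nabla u\|_{L^3}\leq \|\nabla u\|_{L^2}^{1-\theta_p}\|\nabla u\|_{L^p}^{\theta_p}$, then invoke the refined estimate \eqref{S413} (applicable because $\gamma<2\beta$ under \eqref{17}) to conclude, for any prescribed $\varepsilon\in(0,1/2)$ and after choosing $p$ so that $\kappa=1-2/p$ is sufficiently small,
\[
|\mathcal{R}_1| + |\mathcal{R}_2| + |\mathcal{R}_3| \leq B^2 + C(\varepsilon)\,R_T^{1+\varepsilon}\,(1+A^2),
\]
where Young's inequality absorbs a fraction of $B^2$ on the left-hand side. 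Dividing through by $1+A^2$ gives $\frac{d}{dt}\log(e+A^2) + \frac{B^2}{1+A^2} \leq C(\varepsilon)R_T^{1+\varepsilon}$; integrating in time and using \eqref{18} to bound $A(0)$ yields \eqref{qp40}.

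The main obstacle is tracking the powers of $R_T$ with enough precision to land at $R_T^{1+\varepsilon}$ rather than, say, $R_T^{\beta+\varepsilon}$ or worse. Both the density-dependent viscosity $\lambda(\rho)=\rho^\beta$ and the pressure $P=\rho^\gamma$ naturally contribute large $R_T$-powers to the cubic remainders, so the crude estimate \eqref{qp31} is not sufficient. The refined bound \eqref{S413} is sharp precisely because the hypothesis $\gamma<2\beta$ in \eqref{17} allows one to control $\|(2\mu+\lambda)^{-1}F\|_{L^2}$ and $\|(2\mu+\lambda)^{-1}(P-\hat P)\|_{L^2}$ directly by $A$ without any $R_T$-factor, and only the $\kappa$-power of $\|F\|_{H^1}$, which carries $R_T^{1/2}$, contributes the genuine $R_T$-loss. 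A secondary technical point is verifying that the boundary integrals produced by integration by parts either vanish (using $u_t\cdot n=0$ and the tangentiality $u=(u\cdot n^\perp)n^\perp$ on $\partial\Omega$) or combine cleanly with the boundary piece $\int_{\partial\Omega}K|u|^2\,dS$ already present in $A^2$.
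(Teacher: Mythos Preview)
Your overall strategy---multiplying \eqref{34} by $\dot u$, extracting $\tfrac{d}{dt}A^2+B^2$ plus trilinear remainders, dividing by $1+A^2$, and integrating---is the standard Hoff-type argument and is exactly what the paper (via \cite[Proposition~3.6]{flw}) has in mind. There is, however, a genuine error in your bookkeeping: you assert that hypothesis \eqref{17} implies $\gamma<2\beta$ and therefore that \eqref{S413} is available. But \eqref{17} says only $\beta>4/3$ and $\gamma>1$, with no relation between $\gamma$ and $2\beta$; indeed the paper explicitly treats \emph{both} cases $\gamma<2\beta$ and $\gamma\ge 2\beta$ in the lemma immediately following this one, so Lemma~\ref{LL56} must hold without that restriction. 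Your proposed proof, which rests on \eqref{S413}, therefore leaves the case $\gamma\ge 2\beta$ uncovered.

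The repair is that \eqref{qp31} by itself already suffices. The factor $R_T^{\kappa/2+\varepsilon}$ there becomes $R_T^{\varepsilon'}$ for any prescribed small $\varepsilon'$ once you take $p$ close to $2$ (so $\kappa=1-2/p$ is small). Your fear that \eqref{qp31} would force a loss of $R_T^{\beta+\varepsilon}$ comes from the crude bound $\int\lambda(\mathrm{div}\,u)^3\le R_T^\beta\|\nabla u\|_{L^3}^3$, which discards the structure: one should instead keep the weighted quantity $(2\mu+\lambda)(\mathrm{div}\,u)^2$ intact (this is part of $A^2$) and control only the remaining factor via $\mathrm{div}\,u=(2\mu+\lambda)^{-1}(F+P-\hat P)$, using \eqref{311} for $F$. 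Handled this way no stray $R_T^\beta$ appears and the only genuine loss is the $R_T^{1/2}$ from \eqref{311}. A smaller point: your displayed inequality $\tfrac{d}{dt}A^2+B^2\le C(\varepsilon)R_T^{1+\varepsilon}(1+A^2)$ would, after dividing and integrating, yield a bound proportional to $T$; the correct form is $\tfrac{d}{dt}A^2+B^2\le C(1+A^2)\,g(t)$ with $\int_0^T g\,dt\le C(\varepsilon)R_T^{1+\varepsilon}$, which your reference to $\int_0^T A^2\,dt\le C$ suggests you intend but should state explicitly.
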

 
Now let us turn to the crux of a priori estimates. 
We will make use of the commutator arguments developed in Section \ref{S4} together with the   energy estimates in Lemma \ref{0001}--\ref{LL56} to provide proper controls on the effective viscous flux $F$.
\begin{lemma}
Let $F(x)$ be the effective viscous flux defined in \eqref{qp11} and $\varepsilon\in(0,1/2)$, then there is a constant $C$ depending only on $\varepsilon$, $\mu$, $\beta$, $\gamma$, $\rho_0$, $u_0$, and $\Omega$, such that for ${\beta'}\triangleq 1+\beta/{4}+2\varepsilon$ and $\tau,s>0$, it holds that
\begin{equation}\label{CA460}
\int_{\tau}^{\tau+s}-F(x(t),t)\, dt\leq C
\begin{cases}
R_T^{1+\varepsilon}s+R_T^{\beta'},~~&\mbox{if $\gamma<2\beta$,}\\
 R_T^{\beta'}\big(s+1\big),~~&\mbox{if $\gamma\geq 2\beta$.}
\end{cases}
\end{equation}
where $x(t)$ is the flow line determined by $x(t)'=u(x(t),t)$.
\end{lemma}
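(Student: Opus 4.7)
The plan is to exploit the representation of $F$ from Lemma \ref{LL49}: setting $g(x,t)\triangleq\Delta^{-1}\mathrm{div}(\rho u)(x,t)$, one has
\begin{equation*}
F(x,t)=\dot g(x,t)-\left[u_i,\partial_i\Delta^{-1}\partial_j\right](\rho u_j)(x,t)+B(x,t)+R(x,t),
\end{equation*}
where $\dot g$ is the material derivative (this is what is really computed in the derivation of \eqref{qp11}, cf.\ the transition from $\partial/\partial t$ in line 3 to $\mathrm{d}/\mathrm{d}t$ in line 4 of equation \eqref{369}). Integrating $-F$ along the integral curve $x(t)$ of $u$, the material derivative telescopes:
\begin{equation*}
-\!\!\int_\tau^{\tau+s}\!\!F(x(t),t)\,dt=g(x(\tau),\tau)-g(x(\tau+s),\tau+s)+\int_\tau^{\tau+s}\!\!\bigl([u_i,R_{ij}](\rho u_j)-B-R\bigr)(x(t),t)\,dt.
\end{equation*}
The task then splits into bounding $\|g\|_{L^\infty}$ (contributing the $s$-independent $R_T^{\beta'}$) and bounding each of the three integrands pointwise so that the time-integral yields $CR_T^{1+\varepsilon}s$ or $CR_T^{\beta'}s$ according to the regime.

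For $\|g\|_{L^\infty}$, I integrate by parts using $u\cdot n|_{\partial\Omega}=0$ to write $g(x)=-\int\partial_{y_j}N(x,y)\,\rho u_j\,dy$, then apply the Green's-function bound $|\nabla N(x,y)|\leq C|x-y|^{-1}$ from Lemma \ref{LL48} together with the $L^{2+\nu}$-integrability of $\rho u$ in Lemma \ref{0001}. With $\nu=\nu_0 R_T^{-\beta/2}$ and H\"older's inequality at the conjugate exponents $(2+\nu,(2+\nu)/(1+\nu))$, one obtains
$\|g\|_{L^\infty}\leq C\nu^{-1/2}\|\rho u\|_{L^{2+\nu}}\leq C\nu^{-1/2}R_T^{(1+\nu)/(2+\nu)}\leq CR_T^{\beta/4+1/2+\varepsilon}\leq CR_T^{\beta'},$
the factor $\nu^{-1/2}$ coming from $\||y-x|^{-1}\|_{L^{(2+\nu)/(1+\nu)}(\Omega)}\sim\nu^{-1/2}$ as $\nu\to 0^+$. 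This supplies the $s$-independent $R_T^{\beta'}$ in both cases of the bound.

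For the time integral, Lemma \ref{qp08} reduces the inner commutator and the boundary commutator $B$ to Morrey-type integrals of the form $\int_\Omega|u(y)-u(x_*)||y-x_*|^{-2}\rho|u|(y)\,dy$ at points $x_*$ either equal to $x(t)$ or to a boundary projection, plus lower-order remainders involving $|y-x_*|^{-1}$. Morrey's inequality $|u(y)-u(x_*)|\leq C\|\nabla u\|_{L^p}|y-x_*|^{1-2/p}$ with $p>2$ combined with Riesz-potential bounds then yields a pointwise estimate $|[u,R](\rho u)(x)|+|B(x)|\leq C\|\nabla u\|_{L^p}\|\rho u\|_{L^q}$ for any $1/p+1/q<1/2$. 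Using $\|\rho u\|_{L^q}\leq R_T\|u\|_{L^q}\leq CR_T A$ (from $\|\rho\|_{L^\infty}\leq R_T$, the Sobolev embedding $H^1\hookrightarrow L^q$, and Lemma \ref{le3.4}), this reduces the pointwise bound to $CR_T A\cdot\|\nabla u\|_{L^p}$. In the regime $\gamma<2\beta$ I plug in the sharper estimate \eqref{S413}, which has no $R_T^{\kappa/2}$ factor; in the regime $\gamma\geq 2\beta$ I use instead \eqref{qp31}, and its additional $R_T^{\kappa/2}$ (chosen so $\kappa\leq\beta/2$) is absorbed into $R_T^{\beta/4}\leq R_T^{\beta'}$. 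Cauchy-Schwarz in time, together with the energy bounds $\int_0^TA^2\,dt\leq C$ from Lemma \ref{3111} and $\int_0^TB^2/(1+A^2)\,dt\leq CR_T^{1+\varepsilon}$ from Lemma \ref{LL56}, then closes the linear-in-$s$ contribution. The boundary remainder $R(x)$ is treated separately using the trace inequality, the Schauder-type bound \eqref{311} for $F$, and the smoothness of $K$.

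The main obstacle will be controlling the boundary commutator $B(x)$: the naive estimate $|\nabla_x\nabla_y N(x,y)|\leq C|x-y|^{-2}$ is not integrable in two dimensions, so one must exploit the cancellation \eqref{CA443} of Lemma \ref{LL48}, combined with the Navier-slip constraint $u(x_j)\parallel\tau$ (as in the proof of Lemma \ref{qp08}), to cut the effective singularity in the tangential direction from $|y-x_j|^{-2}$ to $|y-x_j|^{-1}$. Tracking the exponents of $R_T$ carefully through the choice of $p$ and through the $R_T$-dependence $\nu=\nu_0 R_T^{-\beta/2}$, so as to land on the sharper $R_T^{1+\varepsilon}s$ when $\gamma<2\beta$ via \eqref{S413} and on $R_T^{\beta'}(s+1)$ otherwise via \eqref{qp31}, is the delicate bookkeeping.
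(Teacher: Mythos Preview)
Your overall architecture is right and matches the paper: telescope the material derivative of $g=\Delta^{-1}\mathrm{div}(\rho u)$ along the flow, bound $\|g\|_{L^\infty}$ via the $L^{2+\nu}$ momentum estimate of Lemma~\ref{0001}, and control the commutator, $B(x)$, and $R(x)$ separately. Your treatment of $\|g\|_{L^\infty}$ and your outline for $R(x)$ are fine.

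The gap is in Step 2. Your direct H\"older bound
\[
\int_\Omega |y-x_*|^{\kappa-2}\,\rho|u|(y)\,dy\;\le\;C(p,q)\,\|\rho u\|_{L^q}\;\le\;C\,R_T\,A
\]
yields one full power of $A$. After multiplying by $\|\nabla u\|_{L^p}\le C R_T^{\varepsilon}A^{1-\kappa}(A+B+1)^\kappa$ from \eqref{S413}, the pointwise bound becomes $C R_T^{1+\varepsilon}A^{2-\kappa}(A+B+1)^\kappa$. Writing $A^{2-\kappa}B^\kappa\le (1+A^2)\bigl[B^2/(1+A^2)\bigr]^{\kappa/2}$ and applying Young forces a term $(1+A^2)^{2/(2-\kappa)}$ with exponent \emph{strictly greater than one}. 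Since the only a priori control on $A$ is $\log(e+A^2)\le C R_T^{1+\varepsilon}$ (Lemma~\ref{LL56}), the time integral picks up a factor $e^{C\kappa R_T^{1+\varepsilon}}$, which is not polynomial in $R_T$ and so cannot be absorbed into either $R_T^{1+\varepsilon}s$ or $R_T^{\beta'}$. Letting $\kappa$ depend on $R_T$ does not help: the Morrey and Sobolev constants blow up as $\kappa\to0$.

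The paper's remedy is a $\delta$--splitting of $\int|y-x|^{\kappa-2}\rho|u|\,dy$: on $\{|y-x|<2\delta\}$ use $\rho\le R_T$ and $\|u\|_{L^{1/\mathbf{Q}}}\le C\mathbf{Q}^{-1/2}\|u\|_{H^1}$; on $\{|y-x|>\delta\}$ use the $L^{2+\nu}$ bound of Lemma~\ref{0001}. Choosing $\delta$ and $\mathbf{Q}=\tfrac{\nu}{2+\nu}\cdot\tfrac{\kappa}{2(1-\kappa)}$ so the two pieces balance gives
\[
\int_\Omega |y-x|^{\kappa-2}\,\rho|u|\,dy\;\le\;C\,R_T\,\mathbf{Q}^{-1/2}A^{1-\kappa}\;\le\;C\,R_T^{1+\beta/4}A^{1-\kappa},
\]
so the combined bound is $C R_T^{1+\beta/4+\varepsilon}A^{2-2\kappa}(A+B+1)^\kappa$. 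Now $A^{2-2\kappa}B^\kappa\le(1+A^2)^{1-\kappa/2}\bigl[B^2/(1+A^2)\bigr]^{\kappa/2}$, and Young gives exactly $(1+A^2)+B^2/(1+A^2)$ with \emph{no} excess power; the weighted version lets you push all the $R_T$ onto the $(1+A^2)$ term, landing on $R_T^{\beta'}(1+A^2)+B^2/(1+A^2)$ after taking $\kappa<\varepsilon/(4+\beta)$. That missing $A^{-\kappa}$ is precisely what closes the argument.
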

\begin{proof}

According to Lemma \ref{LL49}, it holds that
\begin{equation*}
\begin{split}
F(x)
&=\frac{\mathrm{d}}{\mathrm{d}t}\Delta^{-1}\mathrm{div}(\rho u)-\left[u_i,\,\partial_i\,\Delta^{-1}\,\partial_j\right](\rho u_j)+B(x)+R(x),~~\forall x\in\Omega,\\
\end{split}
\end{equation*}

\textit{Step 1.} We first handle $\Delta^{-1}\mathrm{div}(\rho u)$. According to the definition, we argue that
\begin{equation*}
\begin{split}
\left|\Delta^{-1}\mathrm{div}(\rho u)\right|=\left|\int_\Omega\partial_{y_i}N(x,y)\cdot \rho u_i(y)\, dy\right|\leq\int_\Omega|\nabla N(x,y)|\cdot\rho |u|(y)\,dy.
\end{split}
\end{equation*}
Thus with the help of \eqref{CA450} and \eqref{qp90}, we obtain that for $q=(2+\nu)/(1+\nu)<2$,
\begin{equation*}
\begin{split}
&\int_\Omega|\nabla N(x,y)|\cdot\rho |u|(y)\,dy\\
&\leq C\int_\Omega|x-y|^{-1}\cdot\rho|u|(y)\,dy\\
&\leq C\left(\int_\Omega|x-y|^{-q}dy\right)^{1/q}\left(\int_\Omega \rho^{2+\nu}|u|^{2+\nu}dy\right)^{\frac{1}{2+\nu}}\\
&\leq C\cdot\nu^{-1/q}R_T^{1/q}\left(\int_\Omega \rho |u|^{2+\nu}dy\right)^{\frac{1}{2+\nu}}\leq CR_T^{\beta'}.
\end{split}
\end{equation*}
Consequently, we check that
\begin{equation}\label{CA461}
\int_{\tau}^{\tau+s}\frac{d}{dt}\Delta^{-1}\mathrm{div}(\rho u)\big(x(t),t\big)\,dt\leq 2\sup_{0\leq t\leq T}\|\Delta^{-1}\mathrm{div}(\rho u)\|_{L^\infty}\leq CR_T^{\beta'}.
\end{equation}
  
\textit{Step 2.} Then we consider the commutator
$\left[u_i,\,\partial_i\,\Delta^{-1}\,\partial_j\right](\rho u_j)-B(x)$. Note that Lemma \ref{qp08} yields that
\begin{equation*}
\begin{split}
&\left|\left[u_i,\,\partial_i\,\Delta^{-1}\,\partial_j\right](\rho u_j)\right|+\big|B(x)\big|\\
&\leq
C\,\left(\sup_{x\in\overline{\Omega}} \int_\Omega\frac{|u(y)-u(x)|}{|y-x|^2}\,\rho|u| (y)\,dy+  \sup_{x\in\overline{\Omega}}\int_\Omega\frac{\rho|u|^2(y)}{|y-x|}\,dy\right).
\end{split}
\end{equation*}

Let us control the difference quotient in the above inequality. If we still denote that $\kappa=1-2/p\leq\varepsilon$, then the Morrey's inequality (Theorem 5 of \cite[Chapter 5]{2010Partial}) ensures that for any $x\in \overline{\Omega}, $
\begin{equation}\label{qp30}
\begin{split}
\int_\Omega \frac{|u(x)-u(y)|}{|x-y|^2} \rho|u| (y)\,dy &\leq C(p) \int_\Omega\frac{\|\nabla u\|_{L^p}}{|x-y|^{2-\kappa}} \cdot\rho|u| (y)\,dy.
\end{split}
\end{equation}
Taking $\delta>0$ and $\mathbf{Q}<\kappa/2$, which will be determined later, on the one hand,  we deduce  
\begin{equation}\label{S435}
\begin{split}
&\int_{|x-y|<2\delta}\frac{\|\nabla u\|_{L^p}}{|x-y|^{2-\kappa}} \cdot\rho|u| (y)\,dy\\
&\leq C \|\nabla u\|_{L^p}R_T\cdot
\left(\int_{|y| <2\delta}|y|^{(\kappa-2)/(1-\mathbf{Q})} dy\right)^{1-\mathbf{Q}}
\| u\|_{L^{1/\mathbf{Q}}}\\
&\leq C \|\nabla u\|_{L^p}R_T\cdot
\delta^{\kappa-2\mathbf{Q}} \left(\mathbf{Q}^{-1/2}\| u\|_{H^1}\right)\\
&\leq C \|\nabla u\|_{L^p}R_T\cdot \mathbf{Q}^{-1/2} \left(A_1
\delta^{\kappa-2\mathbf{Q}}\right).
\end{split}
\end{equation}
where in the third line, we have applied the Poincar\'{e}-type inequality
\begin{equation}\label{25}
\|u\|_{L^{1/\mathbf{Q}}}\leq C\mathbf{Q}^{-1/2}\|u\|^{2\mathbf{Q}}_{L^2}\|u\|^{1-2\mathbf{Q}}_{H^1 },~~\mbox{for $u\in H^1$ with $u\cdot n\big|_{\partial\Omega}=0$.}
\end{equation}
On the other hand, for $\nu=R_T^{-\frac{\beta}{2}}\nu_{0}$ as in \eqref{nu1q}, we use Lemma \ref{0001} to derive
\begin{equation}\label{S438}
\begin{split}
&\int_{|x-y| >\delta}\frac{\|\nabla u\|_{L^p}}{|x-y|^{2-\kappa}}\cdot\rho|u|(y)\,dy\\
&\leq
C\|\nabla u\|_{L^p}\cdot\left(\int_{|y| >\delta} |y| ^{\frac{2+\nu}{1+\nu}\cdot(\kappa-2)} dy\right)^{\frac{1+\nu}{2+\nu}}
\left(\int_\Omega\rho^{2+\nu}|u|^{2+\nu}\,dx\right)^{\frac{1}{2+\nu}}\\
&\leq C\|\nabla u\|_{L^p}R_T\cdot
\delta^{\kappa-\frac{2}{2+\nu}}.
\end{split}
\end{equation}
Then we choose $\delta>0$ to guarantee
$$\delta^{\kappa-\frac{2}{2+\nu}}=A_1^{1-\kappa},~~\mbox{which also implies that}~~A_1\delta^{\kappa-2\mathbf{Q}} =A_1^{1-\kappa},$$
by setting $
2\mathbf{Q}=\frac{\nu}{2+\nu}\cdot\frac{\kappa}{1-\kappa} \in \left(0,\kappa\right).$
Thus we collect \eqref{S435}--\eqref{S438} to infer that
\begin{equation}\label{S440}
\begin{split}
&\int_\Omega\frac{\|\nabla u\|_{L^p}}{|x-y|^{2-\kappa}} \cdot\rho|u| (y)\,dy\\
&\leq\left(\int_{|x-y|<2\delta}+\int_{|x-y|>\delta}\right)\frac{\|\nabla u\|_{L^p}}{|x-y|^{2-\kappa}} \cdot\rho|u| (y)\,dy \\
&\leq C\|\nabla u\|_{L^p}R_T\cdot \mathbf{Q}^{-\frac{1}{2}} A_1^{1-\kappa}\leq C(p)\,\|\nabla u\|_{L^p}R_T^{1+\frac{\beta}{4}}\cdot A_1^{1-\kappa} ,
\end{split}
\end{equation}
where in the last line we have used $\mathbf{Q}^{-1/2}\leq C(p)\nu^{-1/2}\leq C(p)R_T^{\beta/4} $ due to the definition of $\mathbf{Q}$. Note that by virtue of \eqref{qp31} and \eqref{S413}, we argue that
\begin{equation*}
\begin{cases}
\|\nabla u\|_{L^p}
\leq C\,R_T^{\varepsilon}\cdot A^{1-\kappa}(A+B+1)^{\kappa},~~&\mathrm{if}\ \gamma<2\beta,\\
\|\nabla u\|_{L^{p}}\leq C\,R_T^{\varepsilon}\cdot (A+1)^{1-\kappa}(A+B+1)^{\kappa},~~&\mathrm{if}\ \gamma\geq 2\beta,
\end{cases}
\end{equation*}
which together with \eqref{qp30} and \eqref{S440}  gives that
\begin{equation*}
\begin{split}
&\int_\Omega\frac{|u(x)-u(y)|}{|x-y|^2} \rho|u|(y)\, dy\leq C
\begin{cases}
1+R_T^{\beta'} A^2+\frac{B^2}{1+A^2},~~&\mathrm{if}\ \gamma<2\beta,\\
R_T^{\beta'}\,(1+A^2)+\frac{B^2}{1+A^2}~~&\mathrm{if}\ \gamma\geq 2\beta.
\end{cases}\\
\end{split}
\end{equation*}
with ${\beta'}= 1+\beta/{4}+2\varepsilon$ and $\kappa<\varepsilon/(4+\beta).$ Thus we integrate the above inequality over $[\tau,\tau+s]$ and apply \eqref{300} along with \eqref{qp40} to deduce that   
\begin{equation}\label{CA459}
\begin{split}
&\int_{\tau}^{\tau+s}\sup_{x\in\overline\Omega}\left(\int_\Omega\frac{|u(x)-u(y)|}{|x-y|^2}\rho|u|(y)\, dy\right) dt\leq
C\begin{cases}
s+R_T^{\beta'}
~~&\mbox{if $\gamma<2\beta$},\\
R_T^{\beta'}\big(s
+1\big)~~&\mbox{if $\gamma\geq 2\beta$}.
\end{cases}
\end{split}
\end{equation}
  
\textit{Step 3.} Finally we turn to the lower order term $R(x)$. Recalling that
$$R(x)=l^{-1}\int_{\partial \Omega}F(y)\,dS_y
-\mu\int_{\partial \Omega}N(x, y)\cdot(n^\bot\cdot\nabla(Ku\cdot n^\bot))\,dS_y. $$
In view of \eqref{311}, the first term is directly bounded by
\begin{equation}\label{CA462}
\left|\int_{\partial\Omega} F(y)\,dS_y\right|\,dS\leq \int_{\partial\Omega}\big|F(y)\big|\, dS_y\leq C\big\|F\big\|_{H^1}\leq CR_T^{1/2}\big(1+A+B\big).
\end{equation}
While for the second term, we set $p=4$ in \eqref{qp31} and  apply \eqref{CA450} to declare that
\begin{equation}\label{CA463}
\begin{split}
&\int_{\partial\Omega}N(x,y)\cdot\big(n^\bot\cdot\nabla(Ku\cdot n^\bot)\big)\, dS_y\\
&\leq C \int_{\Omega}\big(|N(x,y)|+|\nabla N(x,y)|\big)\cdot\big(|\nabla u|+|u|\big)\, dy,\\
&\leq C\big(\|N(x,\cdot)\|_{L^{4/3}}+\|\nabla N(x,\cdot)\|_{L^{4/3}}\big)\cdot\|\nabla u\|_{L^4}\\
&\leq CR_T^{1/2+\varepsilon}\big(1+A+B\big),
\end{split}
\end{equation}
where we have taken advantage of the next observation in \cite{caili01},
$$\left|\int_{\partial\Omega}u\cdot(n^\bot\cdot\nabla v)\,dS\right|\leq C\int_\Omega|\nabla u|\cdot|\nabla v|\,dx.$$
Consequently, we integrate \eqref{CA462}--\eqref{CA463} over $[\tau,\tau+s]$ and utilize \eqref{300} together with \eqref{qp40} to infer that
\begin{equation}\label{CA464}
\begin{split}
&\int_{\tau}^{\tau+s}\big|R\big(x(t),t\big)\big|\,dt\\
&\leq
C\int_{\tau}^{\tau+s}R_T^{1/2}\big(1+A+B\big)\,dt\\
&\leq C\int_{\tau}^{\tau+s}\big(R_T^{1+\varepsilon}(1+A^2)+\frac{B^2}{1+A^2}\big)\,dt\\
&\leq CR_T^{1+\varepsilon}(s+1).
\end{split}
\end{equation}
Combining \eqref{CA461}, \eqref{CA459}, and \eqref{CA464} leads to \eqref{CA460}. We therefore finish the proof.
\end{proof}

Now we can obtain the uniform upper bound of the density $\rho$.  
The proof is based on the next Zlotnik's inequality which can be found in \cite{zlt}.
\begin{lemma}\label{L26}
Let $f, g\in W^{1, 1}(0, T)$ and $h\in C(\mathbb{R})$ satisfy
\begin{equation*}
f'(t)=g'(t)+h\big(f(t)\big)\ \mathrm{in}\ [0, T],~~ \lim_{x\rightarrow\infty}h(x)=-\infty.
\end{equation*}
If for any $0\leq t_1<t_2\leq T$, there are positive constants $N_1$ and $N_2$ such that
\begin{equation*}
\begin{split}
g(t_2)&-g(t_1)\leq N_1(t_2-t_1)+N_2,
\end{split}
\end{equation*}
then it holds that
\begin{equation*}
\sup_{0\leq t\leq T}f(t)\leq \max{\{f(0),\,\xi\}}+ N_2,
\end{equation*}
where $\xi$ is a constant such that
$g(x)\leq-N_1$ for any $|x|\geq\xi$.
\end{lemma}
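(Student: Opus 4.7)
The plan is to prove Lemma \ref{L26} by a direct comparison argument: at any time $\tau$ where $f(\tau)$ is potentially large, I track back to the last moment before $\tau$ at which $f$ was below $\xi$, and use the strong dissipation encoded by $h(f)\le -N_1$ in between to cancel the growth contributed by $g$.

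First I would fix an arbitrary $\tau \in (0,T]$ and aim to prove the pointwise bound $f(\tau)\le \max\{f(0),\xi\}+N_2$. Since $f\in W^{1,1}(0,T)$, it is absolutely continuous, so it makes sense to define
\[
s \triangleq \sup\bigl\{\,t\in[0,\tau] \,\bigl|\, f(t)\le \xi\,\bigr\},
\]
with the convention that $s=0$ when the set is empty. There are two cases to handle.

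In the first case, the set is empty, so $f(t)>\xi$ for every $t\in[0,\tau]$; in particular $f(0)>\xi$. By the defining property of $\xi$ we then have $h(f(t))\le -N_1$ on $[0,\tau]$, and integrating the ODE $f'=g'+h(f)$ over $[0,\tau]$ gives
\[
f(\tau)-f(0)=g(\tau)-g(0)+\int_0^{\tau}h(f(t))\,dt\le N_1\tau+N_2-N_1\tau=N_2,
\]
so $f(\tau)\le f(0)+N_2$. In the second case, $s$ is well-defined with $s\le \tau$; by continuity of $f$ and the maximality of $s$ I can choose $s$ so that $f(s)\le \xi$ while $f(t)>\xi$ for all $t\in(s,\tau]$. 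Then $h(f(t))\le -N_1$ on $(s,\tau]$, and integrating over $[s,\tau]$ together with the hypothesis $g(\tau)-g(s)\le N_1(\tau-s)+N_2$ yields $f(\tau)-f(s)\le N_2$, hence $f(\tau)\le \xi+N_2$. Combining the two cases gives the pointwise bound, and taking the supremum over $\tau\in[0,T]$ delivers the claim.

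The argument is essentially elementary and I do not foresee any serious obstacle; the only subtlety is the use of absolute continuity of $f$ (from $f\in W^{1,1}$) to justify the existence of the crossing time $s$ with $f(s)=\xi$ in the second case, and to integrate the identity $f'=g'+h(f)$ in the Lebesgue sense. A minor point is that the hypothesis as stated uses $g(x)\le -N_1$, which is a typographical slip for $h(x)\le -N_1$; I would silently use the latter, which is exactly what one needs to cancel the linear growth $N_1(t_2-t_1)$ coming from $g$.
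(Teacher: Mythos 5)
The paper does not prove this lemma; it is quoted as ``Zlotnik's inequality'' and cited directly to \cite{zlt}, so there is no in-text argument to compare against. Your reconstruction is correct and follows the standard ``last crossing time'' argument for this kind of Gronwall--Zlotnik estimate. You are right that ``$g(x)\leq -N_1$'' in the statement is a typo for ``$h(x)\leq -N_1$,'' and your argument also implicitly uses only the one-sided version ($h(x)\leq -N_1$ for $x\geq\xi$, not $|x|\geq\xi$), which is the form guaranteed by $\lim_{x\to\infty}h(x)=-\infty$ and is all that is needed here. Two small points of hygiene: since $f$ is continuous, the set $\{t\in[0,\tau]:f(t)\leq\xi\}$ is closed, so $s$ is actually a maximum and $f(s)\leq\xi$ holds without further choice; and the degenerate case $s=\tau$ is covered automatically because both the $h$-integral over $[s,\tau]$ and $g(\tau)-g(s)$ vanish, giving $f(\tau)=f(s)\leq\xi$.
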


Lemma \ref{L26} provides the following uniform estimates.
\begin{lemma} 
Under the condition of Theorem \ref{T1}, there exists some positive constant $\mathbf{C}$ depending only on  $\mu,  \beta,$ $ \gamma$, $\rho_0,$ $u_0$, and $\Omega$, such that
\begin{equation}\label{3103}
\sup_{0\leq t\leq T}(\|\rho\|_{L^{\infty}}+\|u\|_{H^{1}})+\int_{0}^{T}\|\sqrt\rho\dot{u}\|_{L^2}^2 dt\leq \mathbf{C} .
\end{equation}
\end{lemma}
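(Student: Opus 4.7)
The plan is to run a Zlotnik-type argument along Lagrangian flow lines, where the effective viscous flux bound \eqref{CA460} exactly controls the forcing term, and then to close the remaining estimates via Lemma \ref{LL56}.

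First I would derive an ODE for the density along particle trajectories. From $\dot\rho=-\rho\,\mathrm{div}u$ and the definition $F=(2\mu+\rho^\beta)\mathrm{div}u-(P-\hat P)$, one gets
\begin{equation*}
\dot\rho=-\frac{\rho}{2\mu+\rho^\beta}\bigl(F+P-\hat P\bigr).
\end{equation*}
Introducing the antiderivative $\Theta(\rho)\triangleq 2\mu\log\rho+\beta^{-1}(\rho^\beta-1)$ so that $\Theta'(\rho)=(2\mu+\rho^\beta)/\rho$, this becomes, along a flow line $x(t)$ with $\dot x(t)=u(x(t),t)$,
\begin{equation*}
\frac{d}{dt}\Theta\bigl(\rho(x(t),t)\bigr)=-F(x(t),t)-\bigl(\rho^\gamma-\hat P\bigr)(x(t),t).
\end{equation*}

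Next I would apply Zlotnik's inequality (Lemma \ref{L26}) with $f(t)=\Theta(\rho(x(t),t))$, $g'(t)=-F(x(t),t)+\hat P$, and $h(f)=-\rho^\gamma$, which tends to $-\infty$ as $f\to\infty$ since $\Theta(\rho)\sim\rho^\beta/\beta$ for large $\rho$. The key input is \eqref{CA460}: for every $\tau,s>0$,
\begin{equation*}
g(\tau+s)-g(\tau)\le\int_\tau^{\tau+s}(-F)\,dt+Cs\le N_1\, s+N_2,
\end{equation*}
with $N_1=CR_T^{1+\varepsilon}$, $N_2=CR_T^{\beta'}$ in the case $\gamma<2\beta$ (and $N_1=N_2=CR_T^{\beta'}$ otherwise), where $\beta'=1+\beta/4+2\varepsilon$. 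Zlotnik's inequality then yields
\begin{equation*}
\sup_{0\le t\le T}\Theta(\rho)\le\max\{\Theta(\rho_0)_\infty,\,\xi\}+N_2,\qquad h(\xi)\le -N_1,
\end{equation*}
and inverting $\Theta$ gives $\sup_t\rho\le C+CN_1^{1/\gamma}+CN_2^{1/\beta}$.

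The delicate step, and the crux of the argument, is verifying that all exponents of $R_T$ on the right are strictly less than one, so that the bound can be absorbed into $R_T$ itself. When $\gamma<2\beta$ the two exponents are $(1+\varepsilon)/\gamma$ and $\beta'/\beta=1/\beta+1/4+2\varepsilon/\beta$; the first is $<1$ since $\gamma>1$, and the second is $<1$ precisely when $\beta>4/3$, which is our hypothesis \eqref{17}. When $\gamma\ge 2\beta$ the exponents $\beta'/\gamma\le\beta'/(2\beta)$ and $\beta'/\beta$ are both $<1$ under the same condition. Choosing $\varepsilon$ small enough we obtain $R_T\le\mathbf{C}$, which is the $L^\infty$ bound on $\rho$.

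Finally, with $\rho$ uniformly bounded, Lemma \ref{LL56} gives $\sup_t(1+A^2(t))+\int_0^T\!B^2(t)/(1+A^2(t))\,dt\le C$; since $1+A^2$ is now bounded both above and below, this upgrades to $\sup_t A^2+\int_0^T B^2\,dt\le C$. The uniform bound on $A$ together with the div--curl estimate \eqref{g4} (using Lemmas \ref{le3.2} and \ref{le3.4}) yields $\sup_t\|u\|_{H^1}\le C$, and $\int_0^T B^2\,dt\le C$ is exactly the desired control of $\int_0^T\|\sqrt\rho\dot u\|_{L^2}^2\,dt$. This completes \eqref{3103}. The main obstacle throughout is the balance of powers of $R_T$ in the Zlotnik step; everything else is assembled from the lemmas already proved.
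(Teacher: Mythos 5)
Your proposal is correct and follows essentially the same approach as the paper: you run the Zlotnik argument along flow lines using the primitive $\theta(\rho)=\beta^{-1}\rho^\beta+2\mu\log\rho$ (the paper's \eqref{gh}), feed in \eqref{CA460} as the control on $g$, invert $\theta$ to convert the bound back to $R_T$, and check that $\beta'<\beta$ and $(1+\varepsilon)<\gamma$ for small $\varepsilon$ so that the self-improving inequality in $R_T$ closes under \eqref{17}; the paper packages this inversion step as $R_T^\beta\leq CR_T^{\max\{\beta',(1+\varepsilon)\beta/\gamma\}}$ in \eqref{S533}--\eqref{CC414}, while you write it as $\sup\rho\leq C+CN_1^{1/\gamma}+CN_2^{1/\beta}$, but these are the same bound. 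The only minor slip is your paraphrase of Lemma \ref{LL56}, which bounds $\log(e+A^2)$ rather than $1+A^2$ directly, but once $R_T\leq\mathbf{C}$ this still gives $\sup_t A^2\leq C$ after exponentiating, and the remainder of your argument (extracting $\|u\|_{H^1}$ from the $A$-bound and $\int_0^T B^2\,dt$ from the weighted integral) matches the paper's closing step.
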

\begin{proof}
We introduce the function $ \theta(\rho)=\big(\beta^{-1}\rho^\beta+2\mu\log\rho\big),$ then in view of
\eqref{11}, \eqref{ou1r}, and \eqref{303}, it is valid that
\begin{equation}\label{gh}
\frac{\mathrm{d}}{\mathrm{d}t}\theta(\rho)+(P-1)
=-F.
\end{equation}
When $\gamma<2\beta$, integrating \eqref{gh} with respect to $t$ and applying the Zlotnik's inequality in Lemma \ref{L26} along with \eqref{CA460}, one obtain that
\begin{equation}\label{S533}
R_T^\beta\leq CR_T^{\max\{\beta',\,(1+\varepsilon)\beta/\gamma\}},
\end{equation}
with $\beta'=1+\beta/4+2\varepsilon.$
Let us set $\varepsilon<\min\{(3\beta-4)/8,\, \gamma-1\}$, then \eqref{S533} and the fact $\beta>4/3$, $\gamma>1$ implies that
\begin{equation*}
\sup_{0\leq t\leq T}\|\rho\|_{L^\infty}\leq \mathbf{C}.
\end{equation*}
When $\gamma\geq 2\beta$, we still integrate \eqref{gh} with respect to $t$ and invoke the Zlotnik's inequality together with \eqref{CA460} to derive that
\begin{equation}\label{CC414}
R_T^\beta\leq CR_T^{\max\{\beta\cdot\beta'/\gamma,\,\beta'\}}.
\end{equation}
Consequently, we set $\varepsilon<(3\beta-4)/8$, then \eqref{CC414} makes sure
\begin{equation*}
\sup_{0\leq t\leq T}\|\rho\|_{L^\infty}\leq\mathbf{C},
\end{equation*}
which together with \eqref{S533} and \eqref{qp40}  gives \eqref{3103}. The proof is therefore finished.
\end{proof}
  
At last, we mention that in view of Theorem \ref{L11}, when $K\geq0$ is not identically $0$ on $\partial\Omega$, the stationary system \eqref{C102}--\eqref{C103} is uniquely solved by the trivial solution $(\hat{\rho},0)$.
Thus according to the arguments given by \cite[Proposition 4.2 \& 4.3]{flw}, we declare that the solution will converge to the steady state $(\hat{\rho},0)$ in the exponential rate, which also gives \eqref{zs}.
  
\begin{lemma} 
Under the conditions of Theorem \ref{T1}, for any $ T\in[0,\infty)$ and $ p\in[1,\infty)$, there are constants $\mathbf{C}'$ and $\alpha$ depending only on $p$, $\mu$, $\beta$, $\gamma$,  $\rho_0$, $u_0$, and $\Omega$ such that
\begin{equation}\label{CC416}
\begin{split}
&\sup_{0\leq t\leq T} e^{\alpha t}\big(\|\rho-\hat\rho\|_{L^p}+\|\nabla u\|_{L^p}\big)\leq\mathbf{C}',\\
&\sup_{0\leq t\leq T}\sigma\|\sqrt\rho \dot u\|_{L^2}^2+ \int^T_0 \sigma\| \nabla\dot{u}\| _{L^2}^2dt \leq\mathbf{C}',
\end{split}
\end{equation}
where the weight is defined by $\sigma(t)\triangleq\min\{1,t\}$.
\end{lemma}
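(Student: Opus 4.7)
\textit{Proof proposal.} The plan is to leverage the uniqueness assertion of Theorem \ref{L11}a) (which gives $(\hat\rho,0)$ as the only steady state when $K\not\equiv 0$) to upgrade the uniform bounds of the previous lemma to exponential decay. First I would set up the relative energy with respect to the equilibrium:
\begin{equation*}
E(t)\triangleq\int_\Omega\Big(\tfrac12\rho|u|^2+G(\rho)\Big)\,dx,\qquad G(\rho)\triangleq\frac{\rho^\gamma-\hat\rho^\gamma-\gamma\hat\rho^{\gamma-1}(\rho-\hat\rho)}{\gamma-1},
\end{equation*}
and derive the dissipation identity
\begin{equation*}
\frac{d}{dt}E(t)+\int_\Omega\bigl((2\mu+\lambda)(\mathrm{div}u)^2+\mu\omega^2\bigr)\,dx+\mu\int_{\partial\Omega}K|u|^2\,dS=0.
\end{equation*}
Because $K$ is not identically $0$, Lemma \ref{le3.4} part 1) gives $\|\nabla u\|_{L^2}^2\leq C\big(\|\mathrm{div}u\|_{L^2}^2+\|\omega\|_{L^2}^2+\int_{\partial\Omega}K|u|^2\,dS\big)$, and combined with the Poincar\'e-type control $\|\rho^{1/2}u\|_{L^2}\leq C\|\nabla u\|_{L^2}$ one gets $\|\rho^{1/2}u\|_{L^2}^2\leq C\cdot(\text{dissipation})$. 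The potential part $\int G(\rho)\,dx$ is controlled by $\|\rho-\hat\rho\|_{L^2}^2$ via the uniform upper bound \eqref{3103}, and by the usual Bogovskii-argument together with the mass equation $(\rho-\hat\rho)_t=-\mathrm{div}(\rho u)$ it can be absorbed into the dissipation modulo an exponentially small correction. Gr\"onwall then yields $E(t)\leq Ce^{-2\alpha t}$, hence
\begin{equation*}
\|\rho-\hat\rho\|_{L^2}+\|\sqrt\rho u\|_{L^2}+\|\nabla u\|_{L^2}\leq Ce^{-\alpha t}.
\end{equation*}

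Next I would turn to the material derivative estimate. Operating $\dot{(\cdot)}=\partial_t+u\cdot\nabla$ on \eqref{34}, multiplying by $\dot u$ and integrating gives, after the standard manipulations on the stress tensor and using the boundary condition in the form $u=(u\cdot n^\bot)n^\bot$,
\begin{equation*}
\frac{1}{2}\frac{d}{dt}\int_\Omega\rho|\dot u|^2\,dx+\mu\int_\Omega|\nabla\dot u|^2\,dx\leq C\int_\Omega|\nabla u|^4\,dx+C\int_\Omega(\rho^{\gamma}+\rho^{2\beta})|\nabla u|^2\,dx+\text{boundary terms}.
\end{equation*}
Multiplying by the weight $\sigma(t)=\min\{1,t\}$ removes the initial singularity, and the right-hand side is handled by \eqref{qp31}--\eqref{S413} using the just-established exponential decay of $A(t)=\|\nabla u\|_{L^2}+\cdots$. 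Integrating in time and applying Gr\"onwall gives the second line of \eqref{CC416}.

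To get the $L^p$ decay in the first line of \eqref{CC416} I would run the effective-viscous-flux argument one more time. From $\rho\dot u=\nabla F+\mu\nabla^\bot\omega$ and the $L^p$-elliptic estimates \eqref{336} I expect $\|F\|_{W^{1,p}}+\|\omega\|_{W^{1,p}}\leq C\|\rho\dot u\|_{L^p}+\text{l.o.t.}$, which together with Lemma \ref{le3.2} yields $\|\nabla u\|_{L^p}\leq C(\|\sqrt\rho\dot u\|_{L^2}^{1-\theta}\|\nabla\dot u\|_{L^2}^\theta+\|\rho-\hat\rho\|_{L^p})$. For the density in $L^p$, applying Lemma \ref{LL49} inside $\frac{d}{dt}\theta(\rho)+(P-1)=-F$ and using the commutator/boundary estimates \eqref{CA447}--\eqref{CA448} reduces the propagation to controlling $\|F\|_{L^\infty}$ along particle trajectories, which is now summable thanks to the exponential bounds on $A$ and $B$. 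A Gr\"onwall argument on $e^{\alpha t}(\|\rho-\hat\rho\|_{L^p}+\|\nabla u\|_{L^p})$ then closes the estimate, with a slightly smaller $\alpha$ if necessary.

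The main obstacle I anticipate is keeping the exponential decay closed at the $L^p$ level: the boundary commutator $B(x)$ and the remainder $R(x)$ in Lemma \ref{LL49} carry non-local weights $|y-x_j|^{-1}$ that mix values near different components of $\partial\Omega$, and establishing that these contributions decay at the same exponential rate as the bulk requires the cancellation \eqref{CA443} together with the fact that $u\cdot n|_{\partial\Omega}=0$ so that only the tangential trace of $u$ (which is itself controlled by $\int_{\partial\Omega}K|u|^2\,dS$) enters the leading order. This is precisely where the new multi-connected geometric ingredients of Sections \ref{S3}--\ref{S4} are indispensable, and where the argument differs from the simply-connected case in \cite{flw}.
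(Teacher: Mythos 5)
Your overall scheme (relative energy, dissipation identity, Bogovskii test function, Gr\"onwall, weighted material-derivative estimate, then $L^p$ via elliptic regularity) is the standard decay-to-equilibrium blueprint, and it is essentially what the paper relies on. But note that the paper does \emph{not} give a self-contained proof of this lemma: in the sentence immediately preceding the statement it simply observes that, with the uniform bound \eqref{3103} and the uniqueness of the steady state from Theorem~\ref{L11}a) in hand, the exponential decay follows from \cite[Propositions 4.2 \& 4.3]{flw}, i.e.\ verbatim from the simply-connected case. So you are reconstructing an argument the paper outsources, and in broad strokes you reconstruct the right one.

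Where your reading of the difficulty goes wrong is the last paragraph. You identify the boundary commutator $B(x)$, the remainder $R(x)$, and the cancellation \eqref{CA443} as the bottleneck for closing the $L^p$ decay, and you claim this is ``where the argument differs from the simply-connected case in \cite{flw}.'' That is misdirected. The Green's-function/commutator machinery of Section~\ref{S4} and the div-curl refinements of Section~\ref{S3} are spent on establishing the \emph{previous} lemma, i.e.\ the uniform bound $\sup_t\|\rho\|_{L^\infty}\leq\mathbf{C}$, via the Zlotnik argument along characteristics. Once that bound is available, the decay estimate contains no further multi-connected obstruction, which is precisely why the paper can cite \cite{flw} rather than redo it. In particular, the $L^p$ decay of $\rho-\hat\rho$ does not need Lemma~\ref{LL49} at all: from the relative energy one gets $\|\rho-\hat\rho\|_{L^2}\leq Ce^{-\alpha t}$, and since $\Omega$ is bounded and $\|\rho\|_{L^\infty}\leq\mathbf{C}$ uniformly, $\|\rho-\hat\rho\|_{L^p}\leq\|\rho-\hat\rho\|_{L^2}^{2/p}\|\rho-\hat\rho\|_{L^\infty}^{1-2/p}$ for $p\geq 2$ (and trivially for $p<2$) gives the exponential rate directly. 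Re-running the effective-viscous-flux transport equation $\frac{d}{dt}\theta(\rho)+(P-1)=-F$ with the commutator estimates \eqref{CA447}--\eqref{CA448} is the wrong tool here; that ODE is designed to propagate an $L^\infty$ upper bound, not an $L^p$ distance to equilibrium. The $\|\nabla u\|_{L^p}$ decay is likewise obtained from the elliptic estimates \eqref{336}, \eqref{311}, the decay of $A(t)$, and the weighted bound on $B(t)$ from your second inequality, again without invoking Section~\ref{S4}.

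Two smaller points worth tightening. First, in the material-derivative step you are interpolating $\|\nabla u\|_{L^p}\lesssim\|\sqrt\rho\dot u\|_{L^2}^{1-\theta}\|\nabla\dot u\|_{L^2}^{\theta}+\cdots$; be careful that $\dot u$ itself satisfies no boundary condition, so the $\nabla\dot u$ term must come from the weighted estimate and cannot be converted into an $H^1$-norm of $\dot u$ by Poincar\'e. The paper's own $L^p$ control of $\nabla u$ is through $F$ and $\omega$ via \eqref{336}--\eqref{311}, which avoids this. Second, your first Gr\"onwall step should be stated as a differential inequality for a modified energy $E(t)+\epsilon\int_\Omega\rho u\cdot w\,dx$ (with $w$ the Bogovskii potential of $\rho-\hat\rho$), not for $E(t)$ alone; otherwise the time derivative of the momentum term cannot be absorbed. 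You gesture at this (``modulo an exponentially small correction'') but it is the essential device that turns dissipation control of $\nabla u$ into dissipation control of $\rho-\hat\rho$, and should be made explicit.
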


\subsection{A priori estimates (II):  higher order estimates}
\quad  This subsection concerns the non-uniform higher order estimates, and the proof is rather routine. Therefore we just sketch the main step, and the complete details can be found in \cite{flw,huang2016existence}.  

\begin{lemma}\label{qou10}
Under the conditions of Theorem \ref{T1}, for any $q > 2$,  there is a constant $\tilde{\mathbf{C}}$ depending only on $\Omega$,  $T$,  $q$,  $\mu$,  $\gamma$,  $\beta$,  $\|u_0\|_{H^1},$ and $\|\rho_0\|_{W^{1, q}}$, such that
\begin{equation}\label{434}\ba
&\sup_{0\leq t\leq T}\big(\|\rho\|_{W^{1, q}} +t\|u\|^2_{H^2}\big) +\int^T_0\big(\|\nabla^2 u\|_{L^q}^{1+\frac{1}{q}}+t\|\nabla^2 u\|_{L^q}^2+t\|u_t\|^2_{H^1}\big)dt \leq \tilde{\mathbf{C}}.  \ea
\end{equation}
In particular, if $\rho_0$ is strictly positive, we declare that
\begin{equation}\label{puw2}  
\sup_{0\leq t\leq T}\|\rho^{-1}(\cdot,t)\|_{L^\infty}\leq \tilde{\mathbf{C}}\,\|\rho_0^{-1}\|_{L^\infty}.
\end{equation}
\end{lemma}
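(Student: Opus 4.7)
The plan is to bootstrap from the uniform bounds \eqref{3103} and \eqref{CC416} to higher-order regularity by the standard two-step scheme for compressible flows: first close a logarithmic Gr\"onwall loop for $\|\nabla\rho\|_{L^q}$ against the transport equation \eqref{11}$_1$, then recover $\|\nabla^2 u\|_{L^q}$ and the time-weighted bounds on $u_t$ from elliptic regularity for the effective viscous flux $F$ and the vorticity $\omega$. The main tools are Lemma \ref{le3.2} and the elliptic theory \eqref{336} from Subsection 5.1; the multi-connectedness of $\Omega$ only enters through the extra point-evaluation terms in Lemma \ref{le3.2}, which are absorbed by the already-established uniform $H^1$ bound on $u$.

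First I would differentiate \eqref{11}$_1$ and pair the result with $|\nabla\rho|^{q-2}\nabla\rho$ to get, after H\"older,
\begin{equation*}
\frac{d}{dt}\|\nabla\rho\|_{L^q}\le C\,\|\nabla u\|_{L^\infty}\|\nabla\rho\|_{L^q}+C\,\|\nabla^2 u\|_{L^q}.
\end{equation*}
Rewriting $(2\mu+\lambda(\rho))\mathrm{div}\,u=F+P-\hat P$ and combining \eqref{336} with Lemma \ref{le3.2} then yields
\begin{equation*}
\|\nabla^2 u\|_{L^q}\le C\bigl(1+\|\nabla\rho\|_{L^q}\bigr)\bigl(\|\rho\dot u\|_{L^q}+\|\nabla u\|_{L^2}+1\bigr),
\end{equation*}
with the $\|F\|_{L^\infty}\|\nabla\rho\|_{L^q}$-type remainders absorbed by interpolation. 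For $\|\nabla u\|_{L^\infty}$ I would invoke the logarithmic Sobolev inequality
\begin{equation*}
\|\nabla u\|_{L^\infty}\le C\bigl(\|\mathrm{div}\,u\|_{L^\infty}+\|\omega\|_{L^\infty}\bigr)\log\bigl(e+\|\nabla^2 u\|_{L^q}\bigr)+C\,\|\nabla u\|_{L^2}+C,
\end{equation*}
with $\|F\|_{L^\infty}+\|\omega\|_{L^\infty}$ interpolated between $\|\rho\dot u\|_{L^2}$ and $\|\rho\dot u\|_{L^q}$. The $L^1_t$-integrability of the resulting prefactor is supplied by \eqref{CC416}, and Gr\"onwall then closes the bound on $\|\nabla\rho\|_{L^q}$ uniformly on $[0,T]$.

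With $\|\nabla\rho\|_{L^q}$ under control, the weighted bounds $\sigma\|\sqrt\rho\dot u\|_{L^2}^2\in L^\infty_t$ and $\sigma\|\nabla\dot u\|_{L^2}^2\in L^1_t$ from \eqref{CC416} give, via Gagliardo-Nirenberg interpolation, $\|\rho\dot u\|_{L^q}\in L^{1+1/q}_t$ and $\sqrt t\,\|\rho\dot u\|_{L^q}\in L^2_t$; fed back through the elliptic estimate above, this produces $\|\nabla^2 u\|_{L^q}^{1+1/q},\,t\|\nabla^2 u\|_{L^q}^2\in L^1_t$ and $t\|u\|_{H^2}^2\in L^\infty_t$. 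Writing $u_t=\dot u-u\cdot\nabla u$ and exploiting the time-differentiated momentum equation then yields $t\|u_t\|_{H^1}^2\in L^1_t$ without further difficulty. Finally, \eqref{puw2} follows by integrating $\tfrac{d}{dt}\log\rho(x(t),t)=-\mathrm{div}\,u(x(t),t)$ along characteristics of $u$ and controlling $\int_0^T\|\mathrm{div}\,u\|_{L^\infty}\,dt$ via the logarithmic Sobolev estimate together with $\|\nabla^2 u\|_{L^q}\in L^{1+1/q}_t$.

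The main obstacle is the circular dependence $\|\nabla\rho\|_{L^q}\leftrightarrow\|\nabla^2 u\|_{L^q}\leftrightarrow\|\nabla u\|_{L^\infty}$; the logarithmic Sobolev estimate breaks this circle precisely because its prefactor $\|F\|_{L^\infty}+\|\omega\|_{L^\infty}$ can be made $L^1_t$-integrable by the uniform decay already established in \eqref{CC416}. All other steps are routine once the framework of Sections \ref{S3}--\ref{S4} is in place.
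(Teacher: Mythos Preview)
Your proposal is correct and follows essentially the same scheme as the paper: a Beale--Kato--Majda estimate for $\|\nabla u\|_{L^\infty}$ feeds a logarithmic Gr\"onwall loop for the $W^{1,q}$-norm of the density, after which the elliptic estimates for $F$ and $\omega$ together with \eqref{CC416} produce the remaining bounds in \eqref{434}, and \eqref{puw2} follows from integrating $\tfrac{d}{dt}\log\rho=-\mathrm{div}\,u$ along characteristics.

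The one refinement in the paper worth noting is that it propagates the quantity $\Phi_i\triangleq(2\mu+\lambda(\rho))\partial_i\rho$ rather than $\partial_i\rho$ itself. Because $\lambda=\rho^\beta$ is density-dependent, working with $\nabla\rho$ directly forces you, when unpacking $\|\nabla\mathrm{div}\,u\|_{L^q}$ from $(2\mu+\lambda)\mathrm{div}\,u=F+P-\hat P$, to carry the extra term $\|\nabla\lambda\,\mathrm{div}\,u\|_{L^q}\sim\|F\|_{L^\infty}\|\nabla\rho\|_{L^q}$ that you flag. The $\Phi$-variable absorbs this coefficient and yields the cleaner inequality
\[
\frac{d}{dt}\|\Phi\|_{L^q}\le C\bigl(1+\|\nabla u\|_{L^\infty}\bigr)\|\Phi\|_{L^q}+C\|\nabla F\|_{L^q},
\]
with $\|\nabla F\|_{L^q}$ appearing directly on the right rather than $\|\nabla^2 u\|_{L^q}$. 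Both routes lead to the same logarithmic Gr\"onwall inequality and close under the same time-integrability of $\|\rho\dot u\|_{L^q}$; the $\Phi$-trick (standard since Vaigant--Kazhikhov and Huang--Li) just shortens the algebra.
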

\begin{proof}
We first consider the following elliptic system on $\Omega$,
\begin{equation*}
\begin{cases}
\Delta{u}=\nabla\mathrm{div}u+\nabla^\bot\omega,\ \mathrm{in}\ \Omega,\\
u\cdot n=0,\ \mathrm{curl}u=-Ku\cdot n^\bot\ \mathrm{on}\ \partial\Omega.
\end{cases}
\end{equation*}
The $L^p$ estimates (see \cite{ADN}) of the above system guarantee that, for $p\in(1,\infty)$,
\begin{equation}\label{CA465}
\|u\|_{W^{k,p}}\leq C\big(\|\mathrm{div}u\|_{W^{k-1,p}}+\|\mathrm{curl}u\|_{W^{k-1,p}}+\|u\|_{L^p}\big).
\end{equation}

Next let us apply the arguments in \cite{huang2016existence} and set
$\Phi_i\triangleq(2\mu + \lambda)\,\partial_i\rho$. In view of $\eqref{11}_1$, we argue that $\Phi_i$ satisfies
\begin{equation}
\partial_t\Phi_i + \mathrm{div}(u\Phi_i) + \rho\,\partial_iP = -(2\mu + \lambda)\nabla\rho\cdot \partial_iu -\rho\, \partial_iF.  \label{435}
\end{equation}
Multiplying \eqref{435} by $|\Phi|^{q-2}\Phi_i$ and integrating over $\Omega$,  we apply \eqref{3103} to obtain that
\begin{equation}
\frac{d}{dt}\|\Phi\|_{L^q}\leq C(1+\|\nabla u\|_{L^\infty})\,\|\Phi\|_{L^q}+ C\,\|\nabla F\|_{L^q}.  \label{436}
\end{equation}

Then the Beale-Kato-Majda type inequality (see  \cite{beale1984remarks,caili01}) provides that
\begin{equation}\notag
\|\nabla u\|_{L^\infty}\leq C(\|\mathrm{div}u\|_{L^\infty}+\|\mathrm{curl}u\|_{L^\infty})\,\mathrm{log}(e+\|\nabla^2u\|_{L^q})+C\|\nabla u\|_{L^2}+C,
\end{equation}
which together with \eqref{336}, \eqref{CC416}, and \eqref{CA465} implies that
\begin{equation}
\begin{split}
\|\nabla u\|_{L^\infty} 
\leq&  C\,(1+\|\rho\dot{u}\|_{L^q})\,\mathrm{log}(e+\|\nabla\rho\|_{L^q}+\|\rho\dot{u} \|_{L^q})+C.  \label{438}
\end{split}
\end{equation}
Substituting \eqref{438} into \eqref{436}, and utilizing \eqref{336} lead to
\begin{equation*}
\frac{d}{dt}\mathrm{log}(e+\|\Phi\|_{L^q})\leq  C\,(1+\|\rho\dot{u}\|_{L^q})\,\mathrm{log}(e+\|\Psi\|_{L^q})+ C\,\|\rho\dot{u}\|^{1+1/q}_{L^q}.
\end{equation*}
Thus we apply \eqref{3103}, \eqref{CC416}, and the Gr\"{o}nwall's inequality to infer that
\begin{equation}
\sup_{0\leq t\leq T}\| \rho\|_{W^{1,q}}\leq \tilde{\mathbf{C}}.  \label{440}
\end{equation}

Now the interpolation arguments along with  \eqref{336}, \eqref{CC416}, \eqref{CA465}, and \eqref{440} gives
\begin{equation*}
\sup_{0\leq t\leq T}t\|\nabla^2u\|^2_{L^2}+\int_0^T\left(\|\nabla^2u\|_{L^q}^{1+1/q}+t\|\nabla^2u\|_{L^q}^2+t\|u_t\|_{H^1}^2\right)dt\leq \tilde{\mathbf{C}}.  
\end{equation*}
which combined with \eqref{440} yields \eqref{434}. In addition, when $\rho_0$ is strictly away from vacuum, we obtain from \eqref{11}$_1$ that
\begin{equation*}
\frac{\mathrm{d}}{\mathrm{d}t}\log \rho =\mathrm{div}u.
\end{equation*}
Integrating the above equation along the flow line and utilizing \eqref{434} and \eqref{CA465} lead  to \eqref{puw2}.
The proof is therefore completed.
\end{proof}

\section{Proof of Theorem \ref{T1}}\label{S6}
\quad The argument is routine and we just sketch it, see \cite{vaigant1995,huang2016existence} for complete details.
 
\textit{Proof of Theorem \ref{T1}.}  
Let $(\rho_0, m_0)$ be the initial values of the system \eqref{11}--\eqref{15} satisfying \eqref{18}. Then the standard approximation theory  (see \cite{2010Partial}) ensures that we can find
a sequence of $C^\infty$ functions $(\tilde{\rho}_0^\delta,\,\tilde{u}_0^\delta)$ satisfying
\begin{equation*} 
\lim_{\delta\rightarrow 0}\big(\|\tilde{\rho}_0^\delta-\rho_0\|_{W^{1,q}}+\|\tilde{u}_0^\delta-u_0\|_{H^1}+\|\mathrm{curl}(\tilde{u}_0^\delta-u_0)\|_{H^{1/2}(\partial\Omega)}\big)=0.
\end{equation*}
Suppose that $u^\delta_0$ is the unique smooth solution to the following elliptic system,
\begin{equation}\label{52}
\begin{cases}
\Delta u^{\delta}_0=\Delta\tilde{u}^\delta_0   \mbox{ in } \Omega, \\
u^{\delta}_0\cdot n=0, \ \mathrm{curl}u^{\delta}_0=-Ku^{\delta}_0\cdot n^\bot \mbox{ on }\partial \Omega. \\
\end{cases}
\end{equation} 
We define $ {\rho}_0^\delta=\tilde\rho^\delta_0+\delta$ and $m^\delta_0=\rho^\delta_0 u^\delta_0.$    Then, the elliptic estimates (see \cite{ADN}) of \eqref{52} leads to
\begin{equation}\label{CA601}
\lim_{\delta\rightarrow 0}(\|\rho^\delta_0-\rho_0\|_{W^{1, q}}+\|u^\delta_0-u_0\|_{H^1}+\|\mathrm{curl}(u_0^\delta-u_0)\|_{H^{1/2}(\partial\Omega)})=0.
\end{equation}

Now we can utilize Lemmas \ref{u21}--\ref{qou10} to construct a unique global strong solution $(\rho^{\delta}, u^{\delta})$ to the system \eqref{11}--\eqref{15} with the initial value $(\rho_0^{\delta}, u_0^{\delta})$. In particular, Lemma \ref{qou10} ensures that
$(\rho,u)$ satisfy  \eqref{434} for any $T>0$ and the constant $\tilde{\mathbf{C}}$ is independent of $\delta$. Thus the standard compactness and uniqueness assertions (see\cite{germain2011weak,perepelitsa2006global,vaigant1995})  together with \eqref{CA601} implies that $(\rho^\delta,u^\delta)$ will converge to the unique global strong solution $(\rho, u)$ to the system \eqref{11}--\eqref{15} as $\delta\rightarrow 0$, with \eqref{19} valid. Meanwhile, \eqref{3103} and \eqref{CC416}   guarantee that $(\rho,u)$ also admits the large time behaviours \eqref{md} and \eqref{zs}. The proof of the theorem is therefore completed.
\thatsall


\begin {thebibliography} {99}


\bibitem{AM}
G. Alessandrini, R. Magnanini, The Index of Isolated Critical Points and Solutions of Elliptic Equations in the Plane.
Annali della Scuola Normale Superiore di Pisa,
{\bf 4}(19) (1992), 
567-589.

\bibitem{ADN}
S. Agmon, A. Douglis, L. Nirenberg, Estimates near the boundary for solutions of elliptic partial differential equations satisfying general boundary conditions II.
Commun. Pure Appl. Math., \textbf{17}(1), 35-92 (1964).

\bibitem{ACJ}
C.J. Amick, 
On Leray’s problem of steady Navier-Stokes flow past a body in the plane.
Acta Math. 
{\bf 161} (1988), 
71–130.

\bibitem{Aramaki2014Lp}
J.  Aramaki,
$L^p$ theory for the div-curl system.
Int.  J.  Math.  Anal.,
{\bf 8}(6) (2014),
259-271.

\bibitem{beale1984remarks}J.  T.  Beale, T.  Kato, A.  Majda,
  Remarks on the breakdown of smooth solutions for the 3-D Euler equations.
  Commun.  Math.  Phys.,
  {\bf 94}(1) (1984),
  61-66.

\bibitem{B}
S. Bergman,
The kernel function and conformal mapping. 
American Mathematical Society, 1950.

\bibitem{caili01}G. C. Cai,  J. Li, Existence and exponential growth of  global classical solutions to the  compressible Navier-Stokes equations with slip boundary conditions in 3D bounded domains. Indiana University Mathematics Journal,
{\bf 72}(6) (2023), 2491-2546.

\bibitem{CM}
R. Coifman, Y. Meyer, On commutators of singular integrals.
Trans. Amer. Math. Soc.,
{\bf 212} (1975), 315-331.


\bibitem{2010Partial}L.  C.  Evans, 
\textit{Partial Differential Equations. Second edition.} Graduate Studies in Mathematics, 19. American Mathematical Society, Providence, RI, 2010.

\bibitem{FLL}
X. Fan, J. Li, J. Li, Global Existence of Strong and Weak Solutions to 2D Compressible
Navier-Stokes System in Bounded Domains with Large Data and Vacuum.
Arch. Rational Mech. Anal.,
{\bf 245} (2022), 239-278;

\bibitem{flw}
X. Fan, J. Li, X. Wang, Large-time behavior of the 2D compressible Navier-Stokes system in bounded domains with large data and vacuum. arXiv:2310.15520

\bibitem{feireisl2004dynamics}E.  Feireisl,   A. Novotn\'{y}, H.  Petzeltov\'{a}, On the existence of globally defined weak solutions to
the Navier-Stokes equations. J. Math. Fluid Mech., {\bf 3} (2001),   358-392.

\bibitem{fmd}  H. Frid, D. Marroquin,  J. F. C. Nariyoshi,
Global smooth solutions with large data for a system modeling aurora type phenomena in the 2-Torus.
SIAM J. Math. Anal.,  {\bf 53} (2021),  1122-1167.

\bibitem{Galdi}
G.P. Galdi,
\textit{An Introduction to the Mathematical Theory of the Navier-Stokes Equations. Steady-State Problems.} Springer, Berlin 2011

\bibitem{germain2011weak} P. Germain,
Weak--strong uniqueness for the isentropic compressible Navier-Stokes system.
J.  Math.  Fluid Mech.,
{\bf 13}  (2011),
137-146.

\bibitem{GT}D.  Gilbarg, N.  S.  Trudinger,
\textit{Elliptic partial differential equations of second order. Second edition.}
Springer-Verlag, 1983.

\bibitem{GW}
D. Gilbarg, H. F. Weinberger,
Asymptotic properties of Leray's  solution of the stationary two-dimensional Navier-Stokes equations. 
Russian Math. Surv. 
{\bf 29} (1974),
109–123 .

\bibitem{GOL}
G. M. Goluzin,
\textit{Geometric theory of Functions of a complex variable.}
American Mathematical Society, Providence, RI, 1969.

\bibitem{1995Global} D.  Hoff,
Global solutions of the Navier-Stokes equations for multidimensional compressible flow with discontinuous initial data.
{J. Differential Equations},
{\bf 120}, (1995),
215-254.

\bibitem{hoff2005} D.  Hoff,
Compressible flow in a half-space with Navier boundary conditions.
J.  Math.  Fluid Mech.,
{\bf 7}(3) (2005),
315-338.

\bibitem{huang2016existence} X. D.  Huang, J.  Li,
  Existence and blowup behavior of global strong solutions to the two-dimensional barotropic compressible Navier-Stokes system with vacuum and large initial data.
  J.  Math.  Pures Appl.,   {\bf 106}  (2016),  123-154.

\bibitem{hlx21}X. Huang, J. Li, Z. P. Xin, Global well-posedness of classical solutions with large oscillations and vacuum to the three-dimensional isentropic compressible Navier-Stokes equations. Commun. Pure Appl. Math., {\bf 65} (2012), 549-585.
 
\bibitem{KPR}
M. V. Korobkov, K. Pileckas, R. Russo,
Solution of Leray’s problem for stationary
Navier-Stokes equations in plane and
axially symmetric spatial domains,
Ann. of Math.,
{\bf 181} (2015), 
769–807.

\bibitem{Leray}
J. Leray,
\'{E}tude de diverses \'{e}quations int\'{e}grales non lin\'{e}aires et de quelques problèmes que pose l’hydrodynamique. 
J. Math. Pures Appl. 
{\bf 12} (1933),
1–82.

\bibitem{lx01} J. Li, Z. P. Xin, Global well-posedness and large time asymptotic
behavior of classical solutions to the compressible
Navier-Stokes equations with vacuum. Annals of PDE, {\bf 5}(7), (2019).

\bibitem{LY}
H. Liu, X. P. Yang, 
Critical points and level sets of Grushin-Harmonic functions in the plane,
J. Anal. Math.
{\bf 143}(2) (2021),
435-460.

\bibitem{1998Mathematical} P.  L.  Lions,
\textit{Mathematical topics in fluid mechanics, vol.  2, compressible models.} Oxford University Press, New York, 1998.

\bibitem{1980The} A.  Matsumura, T.  Nishida,
The initial value problem for the equations of motion of viscous and heat-conductive gases.
J. Mathematics of Kyoto University,
{\bf 20}  (1980),
67-104.

\bibitem{NA}
J. Nash,  
Le probl\`{e}me de Cauchy pour les \'{e}quations diff\'{e}rentielles d'un fluide g\'{e}n\'{e}ral. Bull. Soc. Math. France. 
\textbf{90} (1962), 487-497.

\bibitem{novotny2004introduction} A.  Novotn\'{y}, I. Stra\v{s}kraba,
{Introduction to the mathematical theory of compressible flow}.
{27},
{Oxford Lecture Ser. Math. Appl. Oxford Univ.
Press, Oxford, 2004}.

\bibitem{perepelitsa2006global} M.  Perepelitsa,
On the global existence of weak solutions for the Navier--Stokes equations of compressible fluid flows.
SIAM J.  Math.  Anal., 
{\bf 38} (2006), 1126-1153.

\bibitem{weko_39341_1}
R.  Salvi and I. Straskraba,
Global existence for viscous compressible fluids and their behavior as $t \to \infty$.
J.  Fac.  Sci.    Uni.  Tokyo.  Sect.  1A,  Math.,
{\bf 40}(1) (1993), 17-51.

\bibitem{ser1}J. Serrin, On the uniqueness of compressible fluid motion. Arch. Ration. Mech. Anal., 
{\bf 3} (1959), 271-288.

\bibitem{1972The} V.  A.  Solonnikov,
The Green's matrices for elliptic boundary value problems. II,
{Proceedings of the Steklov Institute of Mathematics},
{\bf 116} (1971),
187-226.

\bibitem{Solonnikov1980}
V.  A.  Solonnikov,
Solvability of the initial-boundary-value problem for the equations of motion of a viscous compressible fluid.
J.  Math.  Sci.,
{\bf 14} (1980),
1120-1133.

\bibitem{2007Complex}
E. M. Stein, R. Shakarchi, \textit{Complex Analysis.} Princeton University Press,  2003.

\bibitem{RT}
R. Temam,
\textit{Navier-Stokes Equations. Theory and numerical Analysis.}
American Mathematical Society, Providence (1984).

\bibitem{1961On}  S.  E.  Warschawski,
On differentiability at the boundary in conformal mapping.
{Proceedings of the American Mathematical Society},
{\bf 12}  (1961),
614-620.

\bibitem{VW}W. Von Wahl,
Estimating $u$ by div$u$ and curl$u$.
Math.  Methods  Appl.  Sci.,
{\bf 15}  (1992),
123-143.

\bibitem{vaigant1995}V.  A.  Vaigant, A.  V.  Kazhikhov,
On existence of global solutions to the two-dimensional Navier-Stokes equations for a compressible viscous fluid.
Sib.  Math.  J.,
{\bf 36}  (1995),
1108-1141.

\bibitem{zlt}
A. A. Zlotnik,
Uniform estimates and stabilization of symmetric solutions of a system of quasilinear equations.
Differ. Equ.,
\textbf{36}(5) (2000),
701-716.

\end{thebibliography}

\end{document}